\documentclass[12pt,dvipsnames]{article} % font size
\usepackage[width=180mm,top=25mm,bottom=25mm]{geometry} % page layout
\usepackage[utf8]{inputenc}  % utf8 support

\usepackage{amssymb} % e.g. mathbb
\usepackage{mathtools} % e.g. equations; loads amsmath
\usepackage{hyperref}
\usepackage{amsthm} % proof environment
\usepackage{cleveref}
\usepackage{IEEEtrantools}
\usepackage{dsfont}

\usepackage{stmaryrd} % norm |||.|||
\usepackage{amsmath}
\usepackage{todonotes}
\usepackage{comment}
\usepackage{xcolor}
\usepackage{subcaption}
\usepackage{enumitem}
\numberwithin{equation}{section}
\theoremstyle{plain}
\newtheorem{thm}{Theorem}
\numberwithin{thm}{section}

\newtheorem{lem}[thm]{Lemma}
\newtheorem{prop}[thm]{Proposition}

\theoremstyle{definition}
\newtheorem{defin}[thm]{Definition}

\newtheorem{rem}[thm]{Remark}
\newtheorem{example}[thm]{Example}
\theoremstyle{remark}

\theoremstyle{plain}
\newtheorem{manualtheoreminner}{Theorem}
\newenvironment{manualtheorem}[1]{%
  \IfBlankTF{#1}
    {\renewcommand{\themanualtheoreminner}{\unskip}}
    {\renewcommand\themanualtheoreminner{#1}}%
  \manualtheoreminner
}{\endmanualtheoreminner}
\newcommand{\W}{\mathcal{W}}
\newcommand{\I}{\mathcal{I}}
\newcommand{\K}{\mathcal{K}}
\newcommand{\F}{\mathcal{F}}
\newcommand{\B}{\mathcal{B}}

\newcommand{\C}{\mathcal{C}}

\newcommand{\M}{\mathcal{M}}
\newcommand{\RR}{\mathcal{R}}
\newcommand{\N}{\mathbb{N}}
\newcommand{\R}{\mathbb{R}}
\newcommand{\x}{\mathbf{x}}
\newcommand{\y}{\mathbf{y}}

\renewcommand{\c}{\mathbf{c}}
\renewcommand{\u}{\mathbf{u}}

\newcommand{\ws}{\stackrel{*}{\rightharpoonup}}
\renewcommand{\leq}{\leqslant}
\renewcommand{\geq}{\geqslant}

\DeclareMathOperator*{\argmin}{arg\,min}

\newcommand{\chris}[1]{\todo[color=blue!40, inline]{C: #1}}

\usepackage[backend=biber,maxnames=8,style=numeric]{biblatex}
\title{Atomic Gradient Flows: Gradient Flows on Sparse Representations\footnotetext{2020 Mathematics Subject Classification: 47J35, 49J27, 49J52, 46A55, 28A33, 47A52}}
\author{Christian Amend\thanks{Department of Applied Mathematics, University of Twente, 7500AE Enschede, The Netherlands \\
(\texttt{christian.amend@utwente.nl}, \texttt{m.c.carioni@utwente.nl},},\ \ Marcello Carioni\footnotemark[1],\ \ Konstantinos Zemas\thanks{Institute for Applied Mathematics, University of Bonn, Endenicher Allee 60, 53115 Bonn, Germany
(\texttt{zemas@iam.uni-bonn.de})}}
\date{}

\begin{document}

\maketitle

\begin{abstract}
\noindent One of the most popular approaches for solving total variation-regularized optimization problems in the space of measures are Particle Gradient Flows (PGFs). These restrict the problem to linear combinations of Dirac deltas and then perform a Euclidean gradient flow in the weights and positions, significantly reducing the computational cost while still decreasing the energy. In this work, we generalize PGFs to convex optimization problems in arbitrary Banach spaces, which we call \emph{Atomic Gradient Flows} (AGFs).\\
To this end, the crucial ingredient turns out to be 
the right notion of \emph{particles}, or \emph{atoms}, chosen here as 
the extremal points of the unit ball of the regularizer. This choice is motivated by the \emph{Krein–Milman theorem}, which ensures that minimizers can be approximated by linear combinations of extremal points or, as we call them, \emph{sparse representations}. We investigate metric gradient flows of the optimization problem when restricted to such sparse representations, for which we define a suitable \emph{discretized functional} that we show to be to be consistent with the original problem via the means of $\Gamma$-convergence. We prove that the resulting evolution of the latter is well-defined using a minimizing movement scheme, and we establish conditions ensuring $\lambda$-convexity and uniqueness of the flow. These conditions crucially depend on the geometric properties of the set of extremal points as a metric space.\\
Then, using \textit{Choquet's theorem}, we lift the problem into the \textit{Wasserstein space} on weights and extremal points, and consider Wasserstein gradient flows in this lifted setting. As observed for PGFs, this lifted perspective is essential for understanding stability and convergence properties of AGFs.
Our main result is that the lifting of the AGF evolution is again a metric gradient flow in the Wasserstein space, verifying the consistency of the approach with respect to a Wasserstein-type dynamic. \\
Finally, we illustrate the applicability of AGFs to several relevant infinite-dimensional problems, including optimization of functions of bounded variation and  curves of measures regularized by Optimal Transport-type penalties.
\end{abstract}

\tableofcontents

\section{Introduction}

Solving convex optimization problems posed in infinite-dimensional spaces has long been a central challenge in optimization. Over the years, a wide range of algorithms with rigorous convergence schemes have been  devised in order to efficiently face this challenge, developing theory and techniques that are specifically adapted to infinite-dimensional settings. On top of that, infinite-dimensional modeling has surged in popularity in fields that are traditionally more application-driven, such as data-science, inverse problems and imaging.

In this context, variational problems posed over spaces of measures have attracted growing attention, with a particularly effective approach being the so-called \text{Particle Gradient Flows} (PGFs). Roughly speaking, given an ambient space $X$, PGFs solve a convex optimization problem in the space of measures $M(X)$ of the form 
\begin{align}\label{eq:intro_meas}
\inf_{\mu \in M(X)} F\left(\int_{X} \phi\, d\mu \right) + \|\mu\|_{TV}\,,
\end{align}
by restricting the search to empirical (or, so called, \textit{sparse}) measures  of the form
\begin{align*}
\mu_{\rm sparse} := \sum_{j=1}^{n} (c^j)^2 \delta_{x^j}\,,  \ \  \text{where } (c^j, x^j) \in \R_+\times X\,.
\end{align*}
The resulting optimization problem is then solved by performing a gradient flow with respect to the particle weights and positions $(c^j,x^j)$ driven by the energy \eqref{eq:intro_meas}. Empirically, PGFs exhibit strong performance provided that the number of particles $n$ is big enough. Several works have established theoretical results such as convergence to global minimizers under suitable assumptions, by relating the particle dynamics to the Wasserstein gradient flow of the original functional \cite{chizat2018global, C22,wojtowytsch2020convergence,Figalli_Real2022}. Moreover, variants in the space of measures have been considered  \cite{wang2022exponentially, chizat2022convergence, chizat2026quantitative, barboni2025understanding} and applications to machine learning have been explored \cite{rotskoff2018neural, abbe2022merged, sirignano2020mean, dana2025convergence}. 

Generally, PGFs belong to the class of \textit{sparse optimization methods} in infinite-dimensional spaces; one operates by directly optimizing over sparse measures, thus taking advantage of the sparse structure of the problem. Due to its simplicity, it is natural to ask whether approaches similar to PGFs can be applied to more general infinite-dimensional optimization problems, where it is even unclear what sparse objects should be, which is the guiding question of this work.  

Our goal is to extend the PGF approach beyond measure spaces. In particular, in our setting we consider composite convex minimization problems posed on a Banach space $\mathcal{M}$ of the form
\begin{equation}\label{eq:P_u_intro}
\inf_{u\in \mathcal{M}}J(u)\,,\quad \mathrm{where } \ \  J(u):=\F(Ku)+\mathcal{R}(u)\,,
\end{equation}
where $\F$ is a convex fidelity term, $K : \mathcal{M} \rightarrow Y$ is a linear operator mapping to a Hilbert space and $\mathcal{R}$ is a convex, $1$-homogeneous regularizer, cf. \Cref{sec:setting_prelim} for details. 
A key insight underlying our approach comes from recent developments in infinite-dimensional sparsity and representation theory. Sparsity is traditionally understood as the possibility  to represent solutions using only a small number of simple building blocks (atoms). In the recent works \cite{boyer2019representer, bredies2020sparsity}, it has been shown that for optimization problems of the form \eqref{eq:P_u_intro}, the natural atoms are given by the  extremal  points of the unit ball of the regularizer 
\begin{align*}
{\rm Ext}(B)\,, \quad \text{where} \ B := \{u \in \mathcal{M} : \mathcal{R}(u) \leq 1\}\,.
\end{align*}
In particular, when the operator $K$ maps into a finite-dimensional space, representer theorems guarantee the existence of solutions that can be expressed as finite linear combinations of such extremal points.
Motivated by this theory, we focus on \textit{sparse ansatzes} of the form
\begin{equation}\label{eq:sp_intro}
u_{\rm sparse} := \sum_{j=1}^n (c^j)^2u^j\,, \  \text{where } (c^j, u^j)\in \mathbb{R}_+\times \tilde{\mathcal{B}}\,,
\end{equation}
and $\tilde{\mathcal{B}}:=\overline{{\rm Ext}(B)}^*$ is the weak*-closure of the set of extremal points.
This representation serves as the foundation for a generalized particle-based optimization framework, applicable to a wide class of convex problems. \\
Crucially, on the compact set $\tilde{\mathcal{B}}$, the weak*-topology of $\mathcal{M}$ can be metrized by a  metric $d_{*}$. Thanks to this, we can take advantage of the well-developed theory of gradient flows in metric spaces \cite{AGS05} in order to construct the gradient flow of 
\[(\c,\u):=(c^j,u^j)_{j=1}^n \in \R_+^n \times \tilde{\mathcal{B}}^n\,,\] 
according to the energy in \eqref{eq:P_u_intro} restricted to  sparse elements of the form  \eqref{eq:sp_intro}, that is,
\begin{equation}\label{eq:intro_J_n}
J_n(\c,\u) := J\left(\frac{1}{n}\sum_{j=1}^n (c^j)^2 u^j\right)\,.
\end{equation}
We name  such general optimization schemes Atomic Gradient Flows (in short AGFs) to indicate that the gradient flow is performed at the level of the atoms. More precisely, we construct AGFs through a minimizing movement approach, whose well-posedness will be based on the assumptions for the optimization problem \eqref{eq:P_u_intro}, cf. again \Cref{sec:setting_prelim} for the precise setup.
As detailed also in Subsection \ref{subsec:tv_regularization}, our work directly generalizes PGFs, since for $\mathcal{R}:=\|\cdot\|_{TV}$, and $\M:= M( X%\Omega
)$, the extremal points will be ${\rm Ext}(B) = \{\pm \delta_{x} : x\in X%\Omega
\}$.\\
Importantly, the motivation for the definition of AGFs is that the discretized functional $J_n$ provides a variational approximation of the target functional $J$
in the sense of $\Gamma$-convergence. As a consequence, minimizers of $J_n$ converge, as $n\rightarrow +\infty$, to minimizers of $J$, at least under suitable boundedness assumptions on the weights. This provides a theoretical justification for studying the gradient flow of $J_n$: for large $n$ the sparse dynamics of $J_n$ can be viewed as a tractable proxy for approximating minimizers of the infinite-dimensional problem \eqref{eq:P_u_intro} with respect to $J$.

After introducing the main setup and recalling standard convexity and derivative notions in metric spaces in \Cref{sec:setting_prelim}, in \Cref{sec:AGFminmov} we introduce the discretized functional and first justify the consistency of our approach using $\Gamma$-convergence,  cf. Subsection \ref{subsec:discretization_MM}. In Subsection \ref{subsec:MM_AGF} we establish the well-posedness of AGFs via a minimizing movement scheme, and then turn to the investigation of qualitative properties of the flow. In particular, in Subsections \ref{subsec:lambda_cvx_nonlifted}-\ref{subsec:npc_nonlifted} we identify conditions on the linear operator $K$ and on the interaction between the regularizer and the geometry of extremal points which ensure that $J_n$ is $\lambda$-convex for some $\lambda \in \mathbb{R}$. This property is fundamental in the theory of gradient flows in metric spaces, as it guarantees that minimizing movements are curves of maximal slope with respect to the local slope 
$|\partial J_n|$. We then address uniqueness of the minimizing movements defining AGFs, under the additional assumption that the set of  extremal  points is non-positively curved (NPC). The NPC condition is a classical geometric assumption in the theory of gradient flows in metric spaces and plays a central role in establishing contraction and regularity properties \cite{gradient_flows_npc}.

In Section \ref{sec:3_lifted} we introduce and analyze a suitable lifting of the functional $J$ to the space $\mathcal{P}_2(\mathbb{R}_+ \times \tilde{\mathcal{B}})$ of probability measures on $\mathbb{R}_+ \times \tilde{\mathcal{B}}$, with the aim of studying its dynamics as a \textit{Wasserstein gradient flow}.
This construction is motivated by the observation that, as shown in \cite{chizat2018global}, interpreting PGFs as Wasserstein gradient flows in the space of probability measures over weights and positions is crucial for understanding both their dynamical behavior and their convergence properties.
In our more general setting however, such an interpretation is not immediate, since the underlying variational problem is formulated in a Banach space and is therefore not directly expressible in terms of probability measures. 
Instead, however, here one can rely on \textit{Choqu\'et's theorem} \cite{P01}, which ensures that every $u \in B$ can be represented as the weak barycenter of a positive measure $\mu \in M_+(\tilde{\mathcal{B}})$ supported on the set of extremal points,  \textit{i.e.},
\begin{align*}
u = \int_{\tilde{\mathcal{B}}} v\, d\mu(v)\,.
\end{align*}
This representation provides the bridge between the original Banach-space formulation and its lifted counterpart in Wasserstein space. Thanks to this identification, and further factorizing the mass of the measure $\mu \in M_+(\tilde{\mathcal{B}})$ in the domain, following \cite{BCFW23}, one can define the following  lifted problem as
\begin{align}\label{eq:I_def_intro}
\min_{\nu \in \mathcal{P}_2(\R_+\times \tilde{\B})} \mathcal{J}(\nu), \quad \text{where} \ \ \mathcal{J}(\nu) :=  \F\left(\int_{\R_+\times \tilde{\mathcal{B}}} c^2 Ku \,d \nu(c,u) \right) + \int_{\R_+\times \tilde{\mathcal{B}}} c^2\, d\nu(c,u)\,.
\end{align}
We advocate that the convergence properties of AGFs can be understood by analyzing the metric gradient flow of the lifted functional $\mathcal{J}$. A first crucial observation supporting this perspective is that the minimization problems \eqref{eq:I_def_intro} and \eqref{eq:P_u_intro} are equivalent, thanks to the surjectivity of the Choquet representation. This equivalence ensures that the lifted formulation faithfully captures the original variational problem.
After establishing this equivalence and deriving the fundamental properties of the lifted functional in Subsection \ref{sec:lifting_wasserstein_space}, in the rest of the section we study its minimizing movement scheme. Our setting differs substantially from that of \cite{chizat2018global}: In contrast to problems where the extremal points possess additional properties, here we must consider a gradient flow on the metric space $\mathbb{R}_+ \times \tilde{\mathcal{B}}$, which in general lacks any differential structure. As a consequence, classical tools based on continuity equations and differential calculus are not available, which requires a fully metric approach and significantly complicates the analysis.
We prove well-posedness of the minimizing movement scheme for $\mathcal{J }$ and establish its $\lambda$-convexity under the same assumptions that guarantee $\lambda$-convexity of $J_n$. In particular, under these conditions, every minimizing movement is a curve of maximal slope with respect to the metric slope $|\partial \mathcal{J}|$.

Finally, in \Cref{sec:equivalence_gfs}, which contains the main result of this paper, we establish a precise link between the AGF of $J_n$  and the corresponding metric gradient flow of $\mathcal{J}$. Specifically, we show that if $(\c_t, \u_t)$ is a curve of maximal slope for $J_n$ with respect to $|\partial J_n|$, then the curve of empirical measures
\begin{align}\label{eq:flows}
t \mapsto \frac{1}{n} \sum_{j=1}^n \delta_{(c_t^j,u_t^j)}
\end{align}
is a curve of maximal slope for $\mathcal{J}$ with respect to $|\partial \mathcal{J }|$. The significance of this result is that it shows that the metric gradient flow of $\mathcal{J}$ recovers (at a discrete level) the dynamics of  the initial AGF and thus sets the necessary basis for analyzing convergence of the AGF to minimizers of $J$ when $n\rightarrow +\infty$; this analysis is left to future works. The proof relies on a careful comparison between the metric slopes of $J_n$ and $\mathcal{J}$, using techniques from semi-discrete optimal transport \cite{ Aurenhammer1998, Merigot_Semi_Discrete} generalized to metric spaces. These tools allow us to localize the slope estimates around each Dirac mass appearing in the empirical representation \eqref{eq:flows}.

We conclude the paper by presenting in \Cref{sec:examples} several natural examples illustrating the applicability of AGFs to different variational problems. For each example, we provide the characterization of $\mathrm{Ext}(B)$ and analyze the metric induced by the weak*-topology on $\tilde{\mathcal{B}}$. We investigate whether this metric space is NPC, which in turn would imply uniqueness of the AGFs, and we examine the structure of the lifting in each case.
More precisely, we cover the following examples: optimization problems in the space of measures, recovering the framework of \cite{chizat2018global}; one-dimensional BV functions regularized by their total variation seminorm \cite{bredies2020sparsity, carioni2023general} and dynamic problems in the space of time-dependent measures regularized by the Benamou–Brenier energy \cite{bredies2021extremal, bredies2023generalized, duval2024dynamical, carioni2025sparsity}. Moreover, we briefly mention how AGFs could be applied to optimization problems regularized with KR-norms \cite{carioni2023general, bartolucci2024lipschitz}, scalar differential operators \cite{bredies2020sparsity, unser2017splines} and higher dimensional BV functions \cite{ambrosio2001connected}.

\section{Setting and Preliminaries}\label{sec:setting_prelim}
Throughout this paper we consider minimization problems of the form 
\begin{equation}\label{eq:P_u}
\inf_{u\in \mathcal{M}}J(u)\,,\ \ \mathrm{where } \  J(u):=\F(Ku)+\mathcal{R}(u)\,,
\end{equation}
where $\mathcal{M}$ is the topological dual of a separable Banach space $\mathcal{C}$. The norm on $\C$ will be denoted by $\|\cdot\|_{\C}$ and the duality pairing between $u \in  \M$ and $p\in \C $ by $\langle u,p\rangle$. The space $\M$ is a Banach space when equipped with the canonical dual norm
\begin{equation}\label{eq: M_norm}
\|u\|_{\M} := \sup \{\langle u,p\rangle\colon \|p\|_{\C}\leqslant  1\}.
\end{equation}
The forward operator $K :\mathcal{M} \rightarrow Y$ is a linear operator mapping into a Hilbert space $Y$. The inner product and induced norm on $Y$ will be denoted by $(\cdot,\cdot)_Y$ and $\|\cdot\|_Y$ respectively. 
We make the following standard assumptions on $\mathcal{F}$, $K$ and $\mathcal{R}$.

\begin{enumerate}[label=\textnormal{(A\arabic*)}] 
\item\label{assmpt:a1} The forward operator $K:\M\to Y$ is linear, weak$^\ast$-to-weak continuous from $\M$ into a Hilbert space $Y$. 
\item\label{assmpt:a2} The fidelity term $\F:Y\to\R$ is bounded from below, convex, and twice Fréchet differentiable on $Y$.
\item\label{assmpt:a3} The regularizer $\mathcal{R}:\M\to [0,+\infty]$  is convex and positively one-homogeneous, \textit{i.e.},  
\begin{equation}\label{eq:homogeneous_regularizer}
\mathcal{R}(\lambda u) = \lambda \mathcal{R}(u) \ \ \forall \lambda \geqslant 0,\ u\in \M\,,
\end{equation}
 and is also weak$^\ast$-lower semicontinuous.
\item\label{assmpt:a4} For every $\alpha \geqslant 0$, the sublevel set
\begin{equation}\label{eq:sublevel_sets}
S_\alpha^{-}(\mathcal{R}) := \{ u \in \mathcal{M}\colon \mathcal{R}(u) \leqslant  \alpha\}\,,
\end{equation}
is weak$^*$-compact.
\item\label{assmpt:a5} The forward operator $K: \M \to Y$ is sequentially weak$^\ast$-to-strong continuous in the domain of $\mathcal{R}$, defined by
\begin{equation}\label{eq:def_domain}
\mathrm{Dom}(\mathcal{R}) := \{u \in  \M\colon \mathcal{R}(u) < +\infty\}\,.
\end{equation}
\end{enumerate}
Note that, due to \ref{assmpt:a1}, there exists a linear and continuous operator
$K^\ast:Y\to \C$, the \textit{adjoint operator} of $K$, which satisfies 
\begin{equation}\label{eq:adjoint_of_K}
\langle u, K^\ast y \rangle=(Ku, y)_Y\ \quad \forall u\in \M,\ y\in Y\,;
\end{equation}
see, for example, \cite[Remark 3.2]{bredies2013inverse}. Moreover, the existence of the pre-adjoint $K^*$ implies the strong-to-strong continuity of $K$ on $\M$. Note further that \ref{assmpt:a4} implies that the sublevel sets are weak*-closed and norm bounded.
It is immediate to verify (see also \cite[Proposition 2.3]{BCFW23}) that, under the above assumptions,  the existence of a minimizer to \eqref{eq:P_u} is guaranteed.
In what follows, we will denote the unit ball of $\mathcal{R}$ by
\begin{equation}\label{eq:1_ball_of_R}
B := S^{-}_{1}(\mathcal{R})=\{u\in \M\colon \mathcal{R}(u)\leqslant  1\}\,.
\end{equation}
\begin{defin}\label{def:extreme_points}
An element $u\in B\subset \mathcal{M}$ is called an \emph{extremal point} of $B$ if there exists no choice of $u_1, u_2 \in B$ with $u_1\neq  u_2$, and $s \in (0, 1)$ such that $u=(1-s)u_1 + su_2$. The set of all extremal points of $B$ is denoted by $\mathrm{Ext}(B)$. We also set 
\begin{equation}\label{eq:extreme_ball}
\tilde{\B}:=\overline{\textup{Ext}(B)}^*\,.
\end{equation}
\end{defin}
\noindent A consequence of the \textit{Krein-Milman theorem} \cite{KreinMilman} is that $B$ is the weak*-closure of the convex envelope of $\tilde{\mathcal{B}}$.

\begin{rem}\label{rem:metrizability_of_B}
Note that by \ref{assmpt:a3} and \ref{assmpt:a4}, $\tilde{\B}$ is weak$^\ast$-compact and non-empty.
\noindent
Since the predual space $\C$ is separable, there exists a metric $d_{\tilde{\B}}$ metrizing the weak$^\ast$-convergence on $\tilde{\B}$, \textit{i.e.}, for all sequences $(u_k)_{k\in \N} \subset \tilde{\B}$ and $u\in \tilde{\B}$ we have:
\begin{equation}\label{eq:metriz_of_weak_star}
u_k\overset{\ast}{\rightharpoonup}u \ \text{ as } k\to \infty\iff  \lim_{k\to\infty} d_{\tilde{\B}}(u_k,u) = 0 \,.
\end{equation}
In particular, one has that $(\tilde{\B}, d_{\tilde{\B}})$ is a compact metric space and thus separable. Moreover, compactness of $(\tilde \B, d_{\tilde \B})$ guarantees the existence of $u_1,u_2\in \tilde \B$ such that
\begin{equation}\label{eq:metric_bounded}
d_{\tilde{\B}}(u_1,u_2)=\sup\big\{d_{\tilde{\B}}(u,v)\colon u,v\in \tilde{\B}\big\}<+\infty\,.
\end{equation}
\end{rem}

We next recall some of the necessary terminology for metric spaces. In what follows, given a metric space $(X,d)$, a curve $\gamma\colon [0,1]\to X$ is always intended to be continuous with respect to the metric $d$. Of course, the interval of parametrization $[0,1]$ can also be replaced with any subinterval of $[0,+\infty)$. 
\begin{defin}[\textit{Geodesic Space}]\label{def:Geodesic_space}
Let $(X,d)$ be a metric space.  For $x,y\in X$, a curve $\gamma:[0,1]\rightarrow X$ is called a \emph{geodesic between $\gamma_0:=x$ and $\gamma_1:=y$}, if
\begin{equation}\label{eq:unit_geodesic}
d(\gamma_s,\gamma_t) = |s-t|\,d(\gamma_0,\gamma_1)\quad  \forall s,t\in [0,1]\,.
\end{equation}
Then $(X,d)$ is called \emph{geodesically complete} iff for every $x,y\in X$ there exists a geodesic $\gamma$ between $x$ and $y$. The space of all such constant-speed geodesics in $X$, will be denoted by $\mathrm{Geo}(X)$.
\end{defin}

\noindent Note that for simplicity, we only use constant speed geodesics parametrized on the unit interval.
\begin{defin}[\textit{Convexity and $\lambda$-convexity along curves}]
\label{def:metric_lambda_convexity} A functional $F:X \rightarrow (-\infty,+\infty]$ is said to be \textit{convex} along a curve $\gamma\colon[0,1]\rightarrow X$, $\gamma_t := \gamma(t)$, iff
\begin{equation}\label{eq:convexity}
F(\gamma_t)\leq (1-t)F(\gamma_0)+tF(\gamma_1) \quad \forall t\in[0,1]\,,
\end{equation}
and given $\lambda\in \R$, $F$ is said to be $\lambda$-\textit{convex} along $\gamma$, iff
\begin{equation}\label{eq:lambda_convexity}
F(\gamma_t)\leqslant  (1-t)F(\gamma_0) + tF(\gamma_1) - \frac{\lambda}{2}t(1-t) d^2(\gamma_0,\gamma_1)\quad \forall t\in [0,1]\,.
\end{equation}
\end{defin}

For metric gradient flows, properties of the metric itself are crucial for the well-posedness of gradient flows, for instance in establishing uniqueness 
as in Subsection \ref{subsec:npc_nonlifted}. Moreover, it is well known (cf. \cite{gradient_flows_npc}) that  $(X,d)$ being a so-called \textit{space of global non-positive curvature (NPC) in the sense of Alexandrov} implies regularity of the gradient flows. Recall that there are various equivalent conditions for a geodesic space $(X,d)$ to be NPC (for more details see for instance \cite{bacak2014convex,bridson2013metric}).
\begin{defin}[\textit{NPC space}]\label{def:NPC} A geodesic metric space $(X, d)$ is a space of (global) \textit{non-positive curvature} iff for every triplet of points $\gamma_0, \gamma_1, w \in X$ and geodesics $\gamma:[0,1] \rightarrow X$, the following inequality holds:
\begin{align}
\label{eq:convexity_of_dist_along_curve}
d^2(\gamma_t,w) \leqslant  (1-t)d^2(\gamma_0,w)+td^2(\gamma_1,w)- t(1-t) d^2(\gamma_0,\gamma_1)\quad \forall t\in [0,1]\,.
 \end{align}
 \end{defin}

Next, we collect some basic definitions of  differentiability of curves and functionals on metric spaces, which we will use throughout the sequel, and refer the reader to \cite[Chapter 1]{AGS05} for more details. 

\begin{defin}[\textit{Absolutely continuous curves}]\label{def:abs_cont_curves}
Let $(X,d)$ be a complete metric space, $p\in[1,+\infty]$. We say that a curve $\gamma %(\c_t,\u_t)
:[0,1]\to X$ belongs to the space of $p$-absolutely continuous curves $AC^{p}([0,1];X)$ iff there exists $m\in L^{p}(0,1)$ such that
\begin{equation}\label{eq:ac_curves}
d(\gamma_s,\gamma_t) \leqslant  \int_s^t m(r)\, \mathrm{d}r \quad \forall\ 0 \leq s\leqslant   t  \leq 1\,, 
\end{equation}
and analogously define $AC^{p}([a,b];X)$%ne can then also
\ for all $a,b \in [0,+\infty)$ with $a<b$.
\end{defin}

In the case $p=1$ the above definition reduces to the one of absolutely continuous curves and we will
denote the corresponding space simply with $AC([a, b];X)$. In addition, we set
\begin{equation*}
AC([0,\infty);X) := \bigcap_{n=1}^\infty AC([0,n];X)\,, \ \ AC_{\mathrm{loc}}([0,\infty);X) := \{\gamma\colon \gamma\in AC([a,b];X) \ \forall\ 0\leq a<b<\infty\}\,.
\end{equation*}

\begin{defin}[\textit{Metric derivative}]\label{def:metric_derivative}
 Let $(X, d)$ be a complete metric space. For any $\gamma \in AC([0,1];X)$, %absolutely continuous curve $v:[0,1]\rightarrow X$, 
 we define the metric derivative of $\gamma$ at $t\in (0,1)$ as 
\begin{equation}\label{eq:metric_derivative}
\left|\gamma'\right|(t):=\lim _{h \rightarrow 0} \frac{d(\gamma(t+h), \gamma(t))}{|h|}\,.
 \end{equation}
\end{defin}
\noindent
The above limit is defined for $\mathcal{L}^1$-a.e. $t\in(0,1)$, % we have that 
$|\gamma'|\in L^1(0,1)$, and $|\gamma'|$ is exactly the \textit{minimal} function $m \in$ $L^1(0,1)$ satisfying the inequality \eqref{eq:ac_curves}.
We next turn to the standard definitions of differentials for functionals defined on metric spaces.
\begin{defin}[\textit{Local slope}]\label{def:local_slope}
Let $F: X \rightarrow(-\infty,+\infty]$ with proper effective domain, \textit{i.e.}, 
\[\mathrm{Dom}(F)\colon= \{u\in X\colon F(u)<+\infty\}\neq \emptyset\,.\]
We define the local slope of $F$ at a point $u \in \operatorname{Dom}(F)$ as
\begin{equation}\label{eq:local_slope}
|\partial F|(u):=\limsup _{w \rightarrow u} \frac{(F(u)-F(w))^{+}}{d(u, w)}\,.
\end{equation}
\end{defin}

Under suitable assumptions, the local slope is indeed a kind of metric gradient for the functional, in the following sense.
\begin{defin}[\textit{Strong upper gradient}]\label{def:Strong_upper_gradient}
A function $g: X \rightarrow[0,+\infty]$ is a strong upper gradient for $F: X\rightarrow(-\infty,+\infty]$ iff for every  $\gamma \in AC([0,1];X)$, the function $g \circ \gamma $ is Borel and the following inequality holds:
\begin{equation}\label{def:strong_upper_grad}
|F(\gamma (t))-F(\gamma (s))| \leq \int_s^t g (\gamma (r))\left|\gamma^{\prime}\right|(r)\, d r \quad \forall 0<s \leq t<1\,.
\end{equation}
\end{defin}

\noindent In particular, if $g \circ \gamma\left|\gamma'\right| \in L^1(0, 1)$, then $F \circ \gamma\in AC([0,1];X)$ %is absolutely continuous
and 
\begin{equation}\label{eq:strong_upper_grad}
\left|(F \circ \gamma)^{\prime}\right|(t) \leq g(\gamma(t))\left|\gamma^{\prime}\right|(t) \quad \text { for } \mathcal{L}^1\text{-a.e. } t \in(0, 1)\,.
\end{equation}

 We next recall the concept of curves of maximal slope, which is a suitable generalization of the standard concept of gradient flows in the metric setting.

\begin{defin}[\textit{Curves of maximal slope}]\label{def:curves_max_slope}
A curve $\gamma\in AC_{\mathrm{loc}}([0,+\infty);X)$ is said to be  %A locally absolutely continuous map $u:[0, +\infty) \rightarrow X$ is 
a curve of maximal slope for a functional $F\colon X\to (-\infty,+\infty]$ with respect to its strong upper gradient $g$, iff the function $F \circ \gamma\colon [0,\infty)\to (-\infty,\infty]$ is $\mathcal{L}^1$-a.e. equal to a non-increasing map $\varphi$, and 
\begin{equation}\label{eq:curve_max_slope}
\varphi'(t) \leq-\frac{1}{2}\left|\gamma'\right|^2(t)-\frac{1}{2} g^2(\gamma(t)) \quad \text { for } \mathcal{L}^1\text {-a.e. } t >0\,.
 \end{equation}
\end{defin}

%For curves with values in a Hilbert space $Y$, differentiability can also be defined in a weak sense, as in the following.

%\begin{defin}[\textit{Weak$^*$ differentiability}]\label{def:weak_star_deriv}
%We say that a curve $\gamma:[0,1]\rightarrow Y$ is weakly* differentiable at $t \in (0,1)$ iff there exists an element $\frac{d}{dt}\gamma(t) \in Y$ such that

%\begin{equation}\label{def:weak*_derivative}
%\frac{\gamma(t+h) - \gamma(t)}{h} \stackrel{*}{\rightharpoonup} \frac{d}{dt}\gamma(t)  \quad \text {as } h \rightarrow 0\,,
%\end{equation}
%\textit{i.e.}, 
%\begin{equation*}
%\lim_{h\rightarrow 0}\left(\frac{ \gamma(t+h) - \gamma(t)}{h},y\right)_Y = \left(\frac{d}{dt} \gamma(t),y\right)_Y\, \quad \forall y\in Y\,,
%\end{equation*}
%and $\gamma$ is called  weakly* differentiable iff it is weakly* differentiable at every $t\in(0,1)$.
%\end{defin}

\section{Formulation of the Atomic Gradient Flow}\label{sec:AGFminmov}
In this section we present a discretization approach to problem \eqref{eq:P_u}, which we show to be consistent with the original problem by means of $\Gamma$-convergence. To approximate minimizers of \eqref{eq:P_u}, we then formalize the evolution of the discretized functional through \textit{Atomic Gradient Flows} (in short AGFs) by using a minimizing movement approach.
\subsection{Discretization of the functional $J$ and consistency}\label{subsec:discretization_MM}

We start by setting the necessary notations to define the restriction of the optimization functional to sparse representations. In what follows, we always follow the convention that components of vectors will be denoted as superscripts, while subscripts will typically denote a time-parameter.

\begin{defin}
\label{def:omega_n}
Recalling the definitions of $\tilde{\B}$ in \eqref{eq:extreme_ball} and the functional $J$ in \eqref{eq:P_u},  let us fix $L>0$, and a closed and  geodesically complete  %connected 
subset $\B\subset \tilde \B$ (cf. \Cref{def:Geodesic_space}), and a metric $d_\B$ metrizing the weak$^\ast$-convergence on $\B$. Then, for $n\in\N$, we set
\begin{equation}\label{eq:Omega_n_def}
\Omega^n_L:= [0,L]^n\times \B^n\quad \text{and} \quad \tilde{\Omega}^n_L:= [0,L]^n\times \tilde{\B}^n.
\end{equation}
We endow $\Omega^n_L$ with the distance
\begin{equation}\label{eq: tau_dissip}
{d_n}((\c,\u),(\tilde{\c},\tilde{\u})):=
\left(\frac{1}{n}\sum_{j=1}^n\big(|c^j-\tilde{c}^{j}|^2 + d^2_\mathcal{B} (u^j,\tilde{u}^{j})\big)\right)^{1/2}\,,
\end{equation}
and analogously for $\tilde{\Omega}^n_L$ with $d_{\tilde{\B}}$ in place of $d_\B$ in \eqref{eq: tau_dissip}.
For $n=1$ we write $\Omega_L:=\Omega_L^1$ and $d_\Omega:=d_1$, \textit{i.e.}, 
\begin{equation}\label{eq:d_omega_def}
d_\Omega((c,u),(\tilde{c},\tilde{u})) = \left(|c-\tilde{c}|^2 + d^2_\mathcal{B} (u,\tilde{u})\right)^{1/2}\,.
\end{equation}
\end{defin}

It is well-known that the properties of $(\B,d_\B)$ propagate to $(\Omega_L^n, d_n)$, as the following lemma suggests. 
\begin{lem}\label{lem:coord_geodesics}  The spaces 
$\Omega_L$ and $\Omega_L^n$, $n>1$,  are both compact, separable and geodesic metric spaces.
Moreover, a curve
\[
(\c_t,\u_t)\colon [0,1]%_{t\in[0,1]} \in 
\to \Omega_L^n\,, 
\ \text{with}\
(\c_t,\u_t)=\big((c_t^1,u_t^1),\dots,(c_t^n,u_t^n)\big)
\]
is a geodesic in $(\Omega_L^n,d_n)$ iff for every $j\in\{1,\dots,n\}$ the coordinate curve
\((c_t^j,u_t^j):[0,1]\to\Omega\) is a geodesic in $(\Omega,d_\Omega)$, or equivalently, iff
$c_t^j:[0,1]\to [0,L]$ and $u_t^j:[0,1]\to\B$ are geodesics in their respective spaces.
\begin{proof}
That $\Omega_L$ and $\Omega_L^n$ are compact and separable metric spaces is a simple consequence of equipping them with the $\ell^2$-metric, cf. \eqref{eq: tau_dissip}. 
The existence of geodesics and geodesic completeness follows from the product–geodesic property in, \textit{e.g.}, \cite[Proposition 5.3]{bridson2013metric}. Since we endowed $\Omega_L^n$ with the $\ell_2$-metric $d_n$, $(\c_t,\u_t)$ is a  unit
-speed geodesic in $\Omega_L^n$ iff,  for every $j\in\{1,\dots,n\}$,  each coordinate $(c_t^j,u_t^j)$ is a unit-speed geodesic in $\left(\Omega, d_{\Omega}\right)$, cf. \eqref{eq:d_omega_def}. As $d_{\Omega}$ is the $\ell_2$-metric on $[0,L] \times \B$ and both spaces in the product are geodesic spaces, $(c_t^j, u_t^j)$ is a  unit-speed geodesic in $\Omega$ iff $c_t^j$ and $u_t^j$ are unit-speed geodesics in $[0,L]$ and $\B$, respectively. 
\end{proof}
\end{lem}
Note that above, and also in what follows, with a slight abuse of notation we have reordered the components of a vector $(\c,\u)\in \Omega_L^n$ into an $n$-component vector with the entries being pairs of the form $(c,u)\in \Omega_L$, with the initial enumeration being taken into account, hence identifying 
\begin{equation}\label{eq:coordinates_of_c_u}
(\c,\u):=\big((c^1,\dots,c^n), (u^1,\dots,u^n)\big)=\big((c^1, u^1), \dots , (c^n, u^n)\big)\in  \Omega_L^n\,.
\end{equation}
Adopting this notation throughout, we next introduce a \textit{discretized version} of the initial functional $J$ of \eqref{eq:P_u}, by restricting it to sparse representations.

\begin{defin}[\textit{Discretization by restriction to sparse representations}] We define the functional $J_n:\Omega_L^n\to \R$ as
\begin{equation}\label{eq:discrete_version_of_energy}
J_n(\c,\u):=J\left(\frac{1}{n}\sum_{j=1}^{n}(c^j)^2u^j \right)\quad \forall (\c,\u)\in \Omega_L^n\,.
\end{equation}
One defines $J_n$ on $\tilde{\Omega}_L^n$ in the same way.
\end{defin}
\noindent 
In general, by  \ref{assmpt:a3}  and the fact that $u^j\in \tilde{\B} = \overline{\mathrm{Ext}\{\RR(u)\leq 1\}}^\ast$ for all $j\in\{1,\dots,n\}$ (which in particular implies that either $\R(u^j)=1$ or $u^j=0$), one has
\begin{equation}\label{eq:no_loss_upper_bound}
\RR\left(\sum_{j=1}^n \alpha^ju^j\right)\leq \sum_{j=1}^n \alpha^j\,, \  \text{where } \alpha^j\geq 0\quad \forall j\in\{1,\dots,n\}\,.
\end{equation}
In addition to \ref{assmpt:a3}--
\ref{assmpt:a5}, it is convenient to impose the following extra condition on the regularizer $\RR$, called the no loss of mass condition.

\begin{enumerate}[label=\textnormal{(A\arabic*)}] 
\setcounter{enumi}{5}
\item \label{assmpt:a6} The regularizer $\RR$ has \emph{no loss of mass} on $\B\subset \tilde{\mathcal{B}}$, \textit{i.e.}, for every $n\in \N$, $(\alpha^j)_{j=1}^n \subset \R_+$ and $(u^j)_{j=1}^n \subset \B$, it holds that
\begin{equation}
\label{eq:cond_no_loss_of_mass}
\RR\left(\sum_{j=1}^n \alpha^ju^j\right) = \sum_{j=1}^n \alpha^j \RR(u^j)\,.
\end{equation}
\end{enumerate}

The following assumption will also be necessary for the some of the subsequent results, for which we will mention it explicitly. 
\begin{enumerate}[label=\textnormal{(A\arabic*)}] 
\setcounter{enumi}{6}
\item \label{assmpt:a7}  The closed, geodesically complete set $\B\subset \tilde{\mathcal{B}}$ does not to contain $0$, \textit{i.e.}, $0\notin \B\,.$

\end{enumerate}

\begin{rem} We make the following remarks about the previous definitions and assumptions.

\medskip

(i) The setting described above for the AGF-evolution 
is natural, since firstly, one has that the canonical identification with elements in the lifted Wasserstein-2 space of probability measures (see \Cref{sec:lifting_wasserstein_space} for the respective definitions), given by 
\begin{align*}
(\Omega_L^n,d_n) \ni (\c,\u) \mapsto \frac{1}{n} \sum_{j=1}^n \delta_{(c^j,u^j)} \in (\mathcal{P}_2(\Omega^n_L),\W_2)
\end{align*}
is $1$-Lipschitz, as 
\begin{align*}\W_2\left(\frac{1}{n}\sum_{j=1}^n\delta_{(c^{j},u^{j})},\frac{1}{n}\sum_{j=1}^n\delta_{(\tilde c^j,\tilde u^j)}\right) \leq \left(\frac{1}{n}\sum_{j=1}^n d_\Omega^2\big((c^j,u^j),(\tilde c^{j},\tilde u^{j})\big)\right)^{1/2}.
\end{align*}
Secondly, using the Krein-Milman theorem, we prove in \Cref{prop:gamma_conv} that on $\tilde{\B}$, $J_n {\rightarrow}J$ as $n\to \infty$, in the sense of $\Gamma$-convergence. Afterwards, we restrict to a geodesically connected subset $\B$ of $\tilde{\mathcal{B}}$, which is sensible in the context of gradient flows, and analogous to replacing $\{\pm\delta_x: x\in X\}$ by $\{\delta_x: x\in X\}$ for the optimization problem \eqref{eq:intro_meas}.

\medskip 

 (ii)  In particular, with the choice $\mathcal{R}(\cdot)=\|\cdot\|_{TV}$, the latter being the total variation norm in the space of measures, the no loss of mass property \eqref{eq:cond_no_loss_of_mass} holds.

\medskip

(iii) It is important to note that many of our results can be proved in the more general setting of unbounded weights in $\R_+$ and without assuming the no loss of mass of property. In those cases, we simply state the result for, \textit{e.g.}, $\Omega_\infty:=\R_+\times \B$ instead of using $\Omega_L$. But for ease of presentation, unless specifically mentioned, we assume for simplicity both \ref{assmpt:a6}-\ref{assmpt:a7}  throughout the manuscript.
\end{rem}

Due to the compactness of $\Omega_L^n$ and our constitutive assumptions, one can easily prove that $J_n$ is lower-semicontinuous and thus admits a minimizer in $\Omega_L^n$ by the direct method in the Calculus of Variations.

\begin{lem}[\textit{Existence of minimizers for the discretized problem}]
\label{lem:lsc_ex_mins} Under assumptions \ref{assmpt:a1}-\ref{assmpt:a5}, $J_n$ is lower-semicontinuous on $\tilde{\Omega}_n$, and $\inf_{\tilde{\Omega}_L^n}J_{n}$ admits a minimizer. In addition, the same holds on $\Omega_L^n$.
\begin{proof}
Let $(\c_\ell,\u_\ell)_{\ell\in \N}\subset\tilde{\Omega}_L^n$ be a sequence such that $(\c_\ell,\u_\ell)\overset{d_n}{\longrightarrow} (\c,\u)\in \tilde{\Omega}_L^n$ as $\ell\to \infty$.
In particular, since $d_\mathcal{B}$ metrizes the weak*-convergence in $\mathcal{B}$, recalling \eqref{eq:metriz_of_weak_star}  and \Cref{def:omega_n}, we have that componentwise, 
$$ (c_{\ell}^j)^2u_{\ell}^j\ws (c^j)^2u^j  \ \ \text{as } \ell\to \infty\,, \quad \forall j\in\{1,\dots,n\}\,.$$ 
\noindent
Therefore, denoting
\begin{equation}\label{eq:x_points}
x_\ell:=\frac{1}{n}\sum_{j=1}^{n}(c_{\ell}^j)^2u_{\ell}^j\,,\quad\textup{and}\quad x:=\frac{1}{n}\sum_{j=1}^{n}(c^j)^2u^j\,, 
\end{equation} 
it holds that $x_\ell\ws x$ as $\ell\to \infty$. In view of \ref{assmpt:a1}, \eqref{eq:x_points} implies that $Kx_\ell\rightharpoonup Kx$ in $Y$, and by \ref{assmpt:a2}, $\mathcal{F}(Kx) \leq \liminf_{\ell\to \infty}\mathcal{F}(Kx_\ell)$.
Similarly, \ref{assmpt:a3} combined again with \eqref{eq:x_points} and the super-additivity of the $\liminf$, yield  in total
\begin{equation}\label{eq:J_n_lsc}
J_n(\c,\u)\leq\liminf_{\ell\to \infty}J_n(\c_\ell,\u_\ell)\,,
\end{equation}
 \textit{i.e.}, $J_n$ is lower semicontinuous. The existence for minimizers for $J_n$ follows by the compactness of $\tilde{\Omega}_L^n$ (since $\tilde{\B}$ is weak$\ast$-compact) as a standard application of the direct method in the Calculus of Variations.\\
 For $\Omega_L^n$, the same arguments can be repeated verbatim.
\end{proof}
\end{lem}

The strategy of optimizing $J_n$ as a substitute for  $J$ is based on the intuition that the variational problem associated with $J_n$ provides a consistent approximation of the original problem. More precisely, as $n$ increases, one anticipates that minimizers of $J_n$ will converge, in an appropriate sense, to minimizers of $J$. We will formalize such a result using a $\Gamma$-convergence approach, cf. \cite{dalmaso19993Gammaconvergence} for a detailed treatment.\\
Recalling the notation \eqref{eq:coordinates_of_c_u}, we first note that one needs to work with unbounded weights and the full set $\tilde{\B}$ here in order to compare $J_n$ with $J$ on the domain of $\mathcal{R}$. Thus, 
we extend $J_n$ to $v\in {\rm Dom}(\mathcal{R})$ by setting \begin{align}\label{eq:extended_J_n}
J_n(v) := \begin{cases} J\left(\frac{1}{n} \sum_{j=1}^n (c^j)^2u^j \right), & \text { if } v = \frac{1}{n} \sum_{j=1}^n (c^j)^2u^j\,, \text{ for some } (\c,\u)\in \R^n_+\times \tilde{\mathcal{B}}^n\,, \\ +\infty, & \text { otherwise. }\end{cases}
 \end{align}
\begin{prop}[$\Gamma$\textit{-Convergence result}]\label{prop:gamma_conv}
Assume \ref{assmpt:a1}-\ref{assmpt:a5}. With respect to the weak$^\ast$-topology on ${\rm Dom}(\mathcal{R})$,  we have that $J_n \overset{\Gamma}{\rightarrow} J$ as $n\to\infty$, \textit{i.e.}, 
\begin{itemize}
\item \emph{($\Gamma$-liminf inequality):} For every $(v_n)_{ n\in \N}\subset {\rm Dom}(\mathcal{R})$ with $v_n \overset{\ast}{\rightharpoonup}v\in {\rm Dom}(\mathcal{R})$, it holds that
\[\displaystyle{J(v)\leq \liminf_{n\to\infty} J_n(v_n)}\,.\]
\item \emph{($\Gamma$-limsup inequality):} For each $v\in {\rm Dom}(\mathcal{R})$, there exists a recovery sequence $(v_n)_{n\in \N }\subset {\rm Dom}(\mathcal{R})$ with  $v_n\overset{\ast}{\rightharpoonup} v\in {\rm Dom}(\mathcal{R})$ such that  
\[\limsup_{n\to\infty} J_n(v_n)\leq \displaystyle{J(v)}\,.\]
Actually, for every $n\in \N$, $v_n$ is given by some $(\tilde{c}^j_{n})_{j=1}^n \subset  [0,\sqrt{n\mathcal{R}(v)}]$, and $(\tilde{u}_{n}^j)_{j=1}^n\subset  \tilde{\B}$ such that
\begin{equation*}
v_n=\frac{1}{n}\sum_{j=1}^{n} (\tilde{c}_{n}^j)^2 \tilde{u}_{n}^j,\quad   \text{and }\quad      \frac{1}{n}\sum_{j=1}^{n} (\tilde{c}_{n}^j)^2 \leq \mathcal{R}(v).
\end{equation*}
\end{itemize}
\end{prop}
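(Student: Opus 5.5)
The liminf inequality is the easy half. I would first record that $J_n\geq J$ pointwise on ${\rm Dom}(\mathcal{R})$: by definition \eqref{eq:extended_J_n}, $J_n(v)$ equals $J(v)$ when $v$ is representable as $\frac1n\sum_j (c^j)^2u^j$, and equals $+\infty$ otherwise. It then suffices to show that $J$ is sequentially weak$^\ast$-lower semicontinuous. For the fidelity term: if $v_n\ws v$, then \ref{assmpt:a1} gives $Kv_n\rightharpoonup Kv$ in $Y$, and a convex, continuous (indeed Fréchet differentiable) $\F$ is weakly lower semicontinuous by \ref{assmpt:a2}. For the regularizer, weak$^\ast$-lower semicontinuity of $\mathcal{R}$ is \ref{assmpt:a3}. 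Superadditivity of $\liminf$ then gives $J(v)\leq\liminf J(v_n)\leq \liminf J_n(v_n)$.

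For the limsup inequality I would build a recovery sequence from Choquet's/Krein–Milman's representation. Fix $v\in{\rm Dom}(\mathcal{R})$ and set $r:=\mathcal{R}(v)$. If $r=0$, then $v=0$ (since $tv\in B$ for all $t$ and $B$ is bounded by \ref{assmpt:a4}). In that case take all $\tilde c^j_n=0$, which gives $v_n=0=v$ and $J_n(v_n)=J(v)$. Otherwise $v/r\in B$, and by Krein–Milman $B=\overline{\mathrm{co}}^\ast(\tilde{\B})$. Alternatively, Choquet's theorem gives a probability measure $\mu$ on $\tilde\B$ with barycenter $v/r$.

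The key step, and the main obstacle, is to approximate $\mu$ by \emph{uniform} empirical measures $\frac1n\sum_{j=1}^n\delta_{\tilde u^j_n}$, because only equal weights $\frac1n$ with a common coefficient are allowed. I would handle this as follows. Since $(\tilde\B,d_{\tilde\B})$ is a compact metric space, there exist points $\tilde u_n^j\in\tilde\B$ whose uniform empirical measures $\mu_n$ converge narrowly to $\mu$; for example, quantize $\mu$ by rounding the masses of a fine finite partition to multiples of $1/n$, or use a sample realization via the law of large numbers. For each $p\in\C$, the map $u\mapsto\langle u,p\rangle$ is continuous and bounded on $\tilde\B$, since $\tilde\B$ is norm bounded. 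Narrow convergence therefore gives $\int u\,d\mu_n\ws\int u\,d\mu=v/r$, interpreted as weak barycenters, because convergence tested against every $p\in\C$ is exactly weak$^\ast$ convergence. Now set $\tilde c^j_n:=\sqrt{r}$, so that $v_n=\frac1n\sum_j r\,\tilde u^j_n\ws v$. Then $\frac1n\sum_j(\tilde c^j_n)^2=r=\mathcal{R}(v)$ and $\tilde c^j_n\leq\sqrt{n\mathcal{R}(v)}$, as required. (If Choquet is to be avoided, one can instead start from convex combinations in $\mathrm{co}(\tilde\B)$ that weak$^\ast$-approximate $v/r$, using metrizability on the bounded set $S^-_{r}(\mathcal{R})$ to extract a sequence, and then round the rational-approximated weights to multiples of $1/n$ by repeating atoms; this repetition is allowed.)

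Finally I would estimate the energy. By \eqref{eq:no_loss_upper_bound}, $\mathcal{R}(v_n)\leq\frac1n\sum_j r=\mathcal{R}(v)$. All the $v_n$ lie in the weak$^\ast$-compact set $S^-_r(\mathcal{R})\subset{\rm Dom}(\mathcal{R})$, so \ref{assmpt:a5} gives $Kv_n\to Kv$ strongly in $Y$. Continuity of $\F$ then yields $\F(Kv_n)\to\F(Kv)$. Combining the two, $\limsup J_n(v_n)=\limsup J(v_n)\leq \F(Kv)+\mathcal{R}(v)=J(v)$.
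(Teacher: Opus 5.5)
Your proof is correct. The liminf half is the same as the paper's, but the limsup takes a genuinely different route.

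The paper never needs equal coefficients, because each $c^j$ in $\frac1n\sum_j (c^j)^2u^j$ may differ. It takes a Krein--Milman convex combination $\sum_{j=1}^{k}\tilde\alpha^j u^j\ws v/\mathcal{R}(v)$ and absorbs the weights into the coefficients via $(\alpha^j)^2:=k\,\mathcal{R}(v)\tilde\alpha^j$. It then pads with zero-weight atoms and rescales by $\sqrt{n/k_\ell}$ to get exactly $n$ atoms for every $n$. So your claim that only a common coefficient is allowed is a misreading, and the ``main obstacle'' you identify is not forced by the statement.

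Your construction is nonetheless valid: you quantize a Choquet representing measure of $v/\mathcal{R}(v)$ into uniform empirical measures on the compact metric space $\tilde\B$, or alternatively round Krein--Milman weights by repeating atoms with a diagonal choice. It costs an extra approximation step compared with the paper's elementary padding. In exchange it gives a stronger recovery sequence: all coefficients equal $\sqrt{\mathcal{R}(v)}$ uniformly in $n$, instead of reaching up to $\sqrt{n\mathcal{R}(v)}$. Hence your recovery sequence is admissible in the fixed-weight space $\tilde\Omega^n_L$ for any $L\geq\sqrt{\mathcal{R}(v)}$. This would let Proposition \ref{prop:compactness_of_min_nonlifted} be stated for bounded weights, which is the setting in which AGFs are actually run, rather than for the growing boxes $\tilde\Omega^n_{\sqrt{nL}}$.

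You also make explicit two points the paper glosses over. First, $\mathcal{R}(v)=0$ forces $v=0$ by boundedness of $B$. Second, extracting a sequence rather than a net from Krein--Milman relies on weak$^\ast$ metrizability of bounded sets, which holds because $\C$ is separable. Finally, your one-sided bound $\mathcal{R}(v_n)\leq\mathcal{R}(v)$ already suffices for the limsup. The paper's extra lower-semicontinuity step giving $\mathcal{R}(v_n)\to\mathcal{R}(v)$ is not needed.
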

 \begin{proof}
 (\emph{$\Gamma$-liminf}) Let $(v_n)_{n\in \N}\subset {\rm Dom}(\mathcal{R})$ be such that $v_n \overset{\ast}{\rightharpoonup} v\in{\rm Dom}(\mathcal{R})$ as $n\to \infty$.
If we suppose that $\liminf_{n\to\infty} J_n(v_n)=+\infty$, then there is nothing to prove. Otherwise,  we may without restriction assume that %it is enough to consider subsequences  such that 
$ \sup_{n\in \N}J_n(v_n)<\infty$.  % for all $n$, since this will only decrease the $\liminf$.
By the definition of $J_n$ on ${\rm Dom}(\mathcal{R})$, this implies that each $v_n$ admits a representation as in \eqref{eq:extended_J_n}, and
$
J_n(v_n)=J(v_n).
$
Since $J$ is weak$^\ast$-lower semicontinuous and $v_n\overset{\ast}{\rightharpoonup} v$ as $n\to \infty$, we get
\begin{align*}
J(v) \leq \liminf_{n\rightarrow\infty} J(v_n) \leq \liminf_{n\rightarrow\infty} J_n(v_n)\,,
\end{align*}
which is the desired $\Gamma$-liminf inequality.

\medskip  
      
(\emph{$\Gamma$-limsup}) Take $v\in {\rm Dom}(\mathcal{R})$, for which we can suppose that $\mathcal{R}(v) \neq 0$. Indeed, otherwise, $v=0$, cf. \ref{assmpt:a3}, and thus the recovery sequence would be the trivial one. \\
We claim that for $n\in \N$ there exist $(\beta^j_n)_{j=1}^n\subset \R_+$, $(u^j_n)_{j=1}^n\subset \tilde{\B}$ such that
\begin{equation}\label{eq:recovery_sequence}
v_n:=\frac{1}{n}\sum_{j=1}^{n} (\beta_n^j)^2 u_n^j  \overset{\ast}{\rightharpoonup} v, \quad \text{ and } \quad \frac{1}{n}\sum_{j=1}^n (\beta_n^j)^2 = \mathcal{R}(v)\,.
\end{equation}

Indeed, by the Krein-Milman theorem, there exists a sequence of convex combinations of extremal points $(u_n^j)_{j=1}^{k_n}\subset \tilde \B$, with $k_n \in \mathbb{N}$, so that
\begin{align*}
\sum_{j=1}^{k_n}\tilde{\alpha}_n^j u_n^j \overset{\ast}{\rightharpoonup} \frac{v}{\mathcal{R}(v)},  \ \text{ with } \tilde{\alpha}_n^j \geq 0 \ \text{ and } \sum_{j=1}^{k_n}\tilde{\alpha}_n^j =1\,.
\end{align*}
Note that by adding extremal points with weights equal to zero we can assume that $k_n$ is strictly increasing in $n$ and thus $k_n \rightarrow +\infty$.
Moreover, by a rescaling argument (and multiplying both sides of the above with $\mathcal{R}(v)$), we can find weights $\alpha_n^j \geq 0$ such that
\begin{equation*}
\frac{1}{k_n}\sum_{j=1}^{k_n}(\alpha_n^j)^2 u_n^j \overset{\ast}{\rightharpoonup} v,  \ \text{ with } \alpha_n^j \geq 0 \,, \text{ and } \frac{1}{k_n}\sum_{j=1}^{k_n}(\alpha_n^j)^2 = \mathcal{R}(v)\,,
\end{equation*}
 in particular $\alpha_n^j:=\sqrt{k_n\mathcal{R}(v)\tilde \alpha_n^j}$. 
Define now the following sequence. Set $k_0:=0$, and define 
\begin{equation*}
v_n := \begin{cases}
0\,, &\text{ if } n \leq k_1\,,\\[2pt]
\frac{1}{k_\ell}\sum_{j=1}^{k_\ell}(\alpha_\ell^j)^2 u_\ell^j\,, &\text{ if } k_\ell  <  n  \leq  k_{\ell+1}\,.
\end{cases}
\end{equation*}
Then clearly still $v_n \overset{\ast}{\rightharpoonup} v$ as $n\to \infty$ and for $n\geq k_1$ it holds that for any extremal point $\bar u \in \tilde{\mathcal{B}}$ that  
\begin{align*}
v_n & = \frac{1}{k_\ell}\sum_{j=1}^{k_\ell}(\alpha_\ell^j)^2 u_\ell^j = \frac{1}{k_\ell}\sum_{j=1}^{k_\ell}(\alpha_\ell^j)^2 u_\ell^j + \sum^{n - k_\ell}_{ j=k_\ell+1} 0 \cdot \bar u = \frac{1}{n}\sum_{j=1}^{k_\ell}\Big(\sqrt\frac{n}{k_\ell}\alpha_\ell^j\Big)^2 u_\ell^j + \sum^{n - k_\ell}_{ j=k_\ell+1} 0 \cdot \bar u = \frac{1}{n}\sum_{j=1}^n (\beta^j_n)^2 \tilde u^j_n\,,
\end{align*}
for a suitable choice of $ \beta^j_n \geq 0$ and $(\tilde u^j_n)^n_{j=1} \subset \tilde {\mathcal{B}}$ with $\frac{1}{n}\sum_{j=1}^n (\beta^j_n)^2 = \mathcal{R}(v)$, which proves \eqref{eq:recovery_sequence}.

 It remains to show that the constructed sequence in \eqref{eq:recovery_sequence} is a recovery sequence. By assumption \ref{assmpt:a5}, the weak$^\ast$-to-strong-continuity of $v\mapsto \mathcal{F}(Kv)$ holds, implying that
\begin{align*}
\mathcal{F}(Kv)= \lim_{n\rightarrow\infty} \mathcal{F}\left(\frac{1}{n}\sum_{j=1}^{n} (\beta^j_n)^2 Ku_n^j\right).
\end{align*}
By Jensen's inequality and \ref{assmpt:a3}, it holds that
\begin{align*}
\mathcal{R}\left(\frac{1}{n}\sum_{j=1}^{n} ( \beta^j_n)^2 u_n^j\right)\leq \frac{1}{n}\sum_{j=1}^{n} (\beta^j_n)^2 \mathcal{R}(u_n^j)  \leq \frac{1}{n}\sum_{j=1}^{n} (\beta^j_n)^2 = \mathcal{R}(v)\,.
\end{align*}
 Therefore, it follows by the weak$^\ast$-lower semicontinuity of $\mathcal{R}$  that 
 \begin{align*}
 \mathcal{R}(v) \leq \liminf_{n\rightarrow\infty} \mathcal{R}\left(\frac{1}{n}\sum_{j=1}^{n} (\beta^j_n)^2 u_n^j\right)\leq \mathcal{R}(v)\,.
\end{align*}
As a consequence, equality must hold and thus
\begin{align*}
\mathcal{R}(v) = \lim_{n\rightarrow\infty}\mathcal{R}\left(\frac{1}{n}\sum_{j=1}^{n} (\beta^j_n)^2 u_n^j \right),\quad \text{so also } \
J(v) = \lim_{n\rightarrow\infty}J\left(\frac{1}{n}\sum_{j=1}^{n} (\beta^j_n)^2 u_n^j\right)\,,
\end{align*}
as desired.
 \end{proof}

By the coercivity of $J$ and general properties of $\Gamma$-convergence, every sequence of minimizers of $J_n$ of the form
$
v_n:=\frac{1}{n} \sum_{j=1}^n\left(c_n^j\right)^2 u_n^j
$
admits a weak*-convergent subsequence, and every weak*-limit minimizes $J$.

 \begin{prop}[Convergence of Minimizers]
\label{prop:compactness_of_min_nonlifted}
Assume \ref{assmpt:a1}-\ref{assmpt:a5}. Then, there exists $L>0$ such that for every sequence of minimizers
\begin{align*}
(\c_n,\u_n) \in \argmin_{(\c,\u)\in \tilde{\Omega}_{\sqrt{n L}}^n} J_n(\c,\u)\,,
\end{align*}
the linear combination $\frac{1}{n}\sum_{j=1}^{n} (c_{n}^j)^2u_{n}^j$ admits a convergent subsequence. Moreover,
for every such convergent subsequence it holds that 
\begin{align*}
 &\lim_{k\rightarrow\infty} \frac{1}{n_k}\sum_{j=1}^{n_k} (c_{n_k}^j)^2u_{n_k}^j =  v_0 \in \argmin_{u\in \M} J(u)\,.
\end{align*}
\end{prop}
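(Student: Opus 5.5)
The argument is the standard ``equi-coercivity plus $\Gamma$-convergence'' scheme. The one extra ingredient is a choice of $L$ which makes the box constraint on the weights harmless for both the recovery sequence and the a priori bounds.

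\emph{Choosing $L$.} Fix a minimizer $u^\ast\in\argmin_{\M}J$, which exists as noted after \eqref{eq:1_ball_of_R}. Set $L:=\max\{\mathcal{R}(u^\ast),1\}$, or more generously $L:=J(0)-\inf_Y\F+1$. The second choice is finite because $\F$ is bounded below by \ref{assmpt:a2}, and it dominates $\mathcal{R}(u)$ for every $u$ with $J(u)\leq J(0)$.

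\emph{Step 1: the minimal values are uniformly bounded.} Let $m_n:=\min_{\tilde\Omega^n_{\sqrt{nL}}}J_n$. The minimum is attained by \Cref{lem:lsc_ex_mins} with $L$ replaced by $\sqrt{nL}$; compactness of $[0,\sqrt{nL}]$ is all that lemma uses. Taking all weights equal to $0$ gives $m_n\leq J(0)$. Writing $v_n:=\frac1n\sum_j (c_n^j)^2u_n^j$, we then have
\[
\mathcal{R}(v_n)\leq J(0)-\inf_Y \F .
\]
By \ref{assmpt:a4}, the $v_n$ lie in a fixed weak$^\ast$-compact sublevel set $S^-_\alpha(\mathcal{R})$, which is metrizable on bounded sets since $\C$ is separable. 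So a weak$^\ast$-convergent subsequence $v_{n_k}\ws v_0$ exists, and $v_0\in{\rm Dom}(\mathcal{R})$ by lower semicontinuity. This gives the first claim, with convergence understood in the weak$^\ast$ sense, the topology of \Cref{prop:gamma_conv}.

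\emph{Step 2: the limit is a minimizer.} Take any subsequence with $v_{n_k}\ws v_0$ and any competitor $w\in{\rm Dom}(\mathcal{R})$. \Cref{prop:gamma_conv} provides a recovery sequence $w_n=\frac1n\sum_j(\tilde c_n^j)^2\tilde u_n^j$ with $\tilde c_n^j\in[0,\sqrt{n\mathcal{R}(w)}]$, $w_n\ws w$ and $\limsup_n J_n(w_n)\leq J(w)$.

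\textbf{Main obstacle.} The recovery sequence is admissible in $\tilde\Omega^n_{\sqrt{nL}}$ only when $\mathcal{R}(w)\leq L$, so we cannot compare against arbitrary $w$. We restrict to $w=u^\ast$; by the choice of $L$ we have $\mathcal{R}(u^\ast)\leq J(u^\ast)-\inf\F\leq J(0)-\inf\F\leq L$. Minimality of $(\c_n,\u_n)$ then gives $J_n(v_n)\leq J_n(w_n)$, and the $\Gamma$-liminf inequality yields
\[
J(v_0)\leq\liminf_k J_{n_k}(v_{n_k})\leq\limsup_k J_{n_k}(w_{n_k})\leq J(u^\ast)=\min_\M J .
\]
Here $J_{n_k}(v_{n_k})=J(v_{n_k})$ by definition \eqref{eq:extended_J_n}. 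Hence $v_0\in\argmin_\M J$.

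The only point needing care is this choice of $L$ independent of the minimizer. We take it through the a priori bound $\mathcal{R}(u^\ast)\leq J(0)-\inf\F$, which holds for every minimizer, so in fact any $L\geq J(0)-\inf_Y\F$ works.
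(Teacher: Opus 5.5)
Your proof is correct and follows essentially the same route as the paper: you bound $\mathcal{R}(v_n)$ through a uniform bound on the minimal values, extract a weak$^\ast$-convergent subsequence via \ref{assmpt:a4}, and compare with the recovery sequence of \Cref{prop:gamma_conv} for a minimizer whose weights lie in $[0,\sqrt{nL}]$. The only difference is minor: you bound $\min J_n$ by $J(0)$ using the all-zero-weight competitor, which also gives the explicit, minimizer-independent choice $L\geq J(0)-\inf_Y\F$, whereas the paper shows that $\min J_n$ is non-increasing in $n$ by padding with a zero-weight atom and rescaling.
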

\begin{proof}
Let us consider a minimizer $v_*$ of $J$ in $\M$ and choose $L > \mathcal{R}(v_*)$. 
Choosing $(\c_n,\u_n)_{n\in \N}$ as in the statement, we obtain
\begin{align*}
\mathcal{R}\left(\frac{1}{n}\sum_{j=1}^n (c^j_n)^2 u^j_n\right) & = J_n(\c_n,\u_n) - \mathcal{F} \left(\frac{1}{n}\sum_{j=1}^n (c^j_n)^2 K u^j_n\right)  \leq \min_{(\c,\u)\in \Omega^n_{\sqrt{nL}}}J_{n}(\c,\u) - \inf_{v\in \M}\F(v)\,.
\end{align*}
The term in the right hand side above remains bounded, since it holds that for every $n\in \N$,
\begin{equation*}
\min_{(\c,\u)\in\tilde{\Omega}^{n+1}_{\sqrt{(n+1)L}}} J_{n+1}(\c,\u) \leq \min_{(\c,\u)\in \tilde{\Omega}^n_{\sqrt{nL}}} J_n(\c,\u)\,,
\end{equation*}
as one can show that all elements in $\tilde{\Omega}_{\sqrt{n L}}^{n}$ are valid competitors for the problem in $\tilde{\Omega}_{\sqrt{(n+1) L}}^{ n+1}$, by adding an extremal point with zero weight, and rescaling.
Therefore, there exists a $C >0$ such that
\begin{align*}
\frac{1}{n}\sum_{j=1}^n (c_n^j)^2 u_n^j \in \{v: \mathcal{R}(v)\leq C\} \,,
\end{align*}
which is weak*-compact by \ref{assmpt:a4}.
Hence, there exists a subsequence $(n_k)_{k\in \N}$ and $v_0\in \M$ such that
\begin{align*}
\frac{1}{n_k}\sum_{j=1}^{n_k} (c_{n_k}^j)^2 u_{n_k}^j \stackrel{*}{\rightharpoonup} v_0\,.
\end{align*}
It remains to prove that $v_0 \in \argmin_{v\in\M}J(v)$. 
Since $v_* \in {\rm Dom}(\mathcal{R})$, by the $\Gamma$-convergence result of Proposition \ref{prop:gamma_conv}, there exists a recovery sequence $(\tilde \c_{n},\tilde \u_{n}) \in \tilde{\Omega}^n_{\sqrt{nL}}$
such that
\begin{align*}
\frac{1}{n}\sum_{j=1}^{n} (\tilde{c}_{n}^j)^2 \tilde{u}_{n}^j \stackrel{*}{\rightharpoonup} v_*\,,\quad         \frac{1}{n}\sum_{j=1}^{n} (\tilde{c}_{n}^j)^2 \leq \mathcal{R}(v_\ast)\,, \quad \text{and} \quad J_{n}\left(\frac{1}{n}\sum_{j=1}^{n} (\tilde{c}_{n}^j)^2 \tilde{u}_{n}^j\right) \longrightarrow J(v_*)\, \ \text{ as } n\to\infty\,.
\end{align*}
In particular, $(\tilde{\c}_n,\tilde{\u}_n)$ is an admissible competitor for $J_n$, for every $n\in \N$, so that
\begin{equation*}
J_n(\c_n,\u_n) \leq J_n(\tilde{\c}_n,\tilde{\u}_n)\,,
\end{equation*}
which, together with the $\Gamma$-liminf inequality implies that 
\begin{align*}
J(v_*)\geq \limsup_{k\rightarrow\infty} J_{n_k}\left(\frac{1}{n_k}\sum_{j=1}^{n_k} (\tilde{c}_{n_k}^j)^2 \tilde{u}_{n_k}^j\right) \geq \liminf_{k\rightarrow\infty} J_{n_k}\left(\frac{1}{n_k}\sum_{j=1}^{n_k} (c_{n_k}^j)^2 u_{n_k}^j\right) \geq J(v_0)\,,
\end{align*}
\textit{i.e.}, $v_0 \in \argmin_{v\in\M}J(v)$, which completes the assertion.
\end{proof}

\subsection{Definition of AGFs through minimizing movements}\label{subsec:MM_AGF}

\textit{Atomic gradient flows (AGFs)} are defined as gradient flows in the metric space $\Omega_L^n$ of the functional $J_n$.  In this subsection, we show the existence of such gradient flows using a minimizing movements approach, also called JKO scheme, cf. \cite{JKO1998}. The minimizing movements setup is also briefly recalled in \Cref{sec:met_th}. Here, considering again $\Omega_n^L$ makes sense to enforce geodesical completeness and compactness, which we will both require in the proofs. We first define the approximation scheme.

\begin{defin}[\textit{Approximation scheme}]
\label{def:discrete_scheme_mms}
Let $\tau>0$ and $(\c^0,\u^0)\in\Omega_L^n$ be a given initial datum. Set $(\c^0_\tau,\u^0_\tau):=(\c^0,\u^0)$ and for every $k\in \N$ choose iteratively
\begin{equation}\label{eq:discrete_minim_scheme}
(\c^{k+1}_\tau,\u^{k+1}_\tau)\in\argmin_{\Omega_L^n}G_{n,\tau}^k\,,
\end{equation}
where the \textit{$\tau$-discretized energy} $G^k_{n,\tau}\colon\Omega_L^n\to\R$ is defined by
\begin{equation}\label{eq:energy_and_dissip_fct}
G_{n,\tau}^k(\c,\u):=J_n(\c,\u)+\frac{1}{2\tau}d^2_n\big((\c,\u),(\c^k_\tau,\u^k_\tau)\big) \quad \forall (\c,\u)\in \Omega_L^n\,.
\end{equation}
\end{defin}

Note that the existence of minimizers of $G^k_{n,\tau}$ follows directly from the compactess of $\Omega_L^n$ and the lower semicontinuity of $J_n$ stated in Lemma \ref{lem:lsc_ex_mins}.

The existence of minimizing movements in our setting can now be obtained by a direct application of \cite[Proposition 2.2.3]{AGS05}. The necessary lower-semicontinuity, coercivity and compactness properties follow by \Cref{lem:lsc_ex_mins}, the fact that $J_n$ is bounded from below and the compactness of $\Omega^L_n$.
%\begin{lem}[Attainment of Minimum for $J_n$]
%\label{prop:minimum_Jn_attained}
%The problem $\inf_{\Omega_L^n}J_{n}$ admits a minimizer.
%\begin{proof}
%Take $(\c_k,\u_k)_{k\in \N}\in \Omega_L^n$ such that
%\begin{align*}
%\lim_{k\rightarrow\infty} J_n(\c_k,\u_k) = \inf_{(c,u)\in\Omega_L^n} J_n(\c,\u).
%\end{align*}
 
 %       Then by  there exists a subsequence $(\c_{k_l},\u_{k_l})$ such that $(\c_{k_l},\u_{k_l}) \rightarrow (\c_*,\u_*)\in \Omega_L^n$ for $l\rightarrow\infty$. Finally, the lower-semicontinuity from Lemma \ref{lem:cptness_lsc} implies that
 %       \begin{align*}
 %           \inf_{(\c,\u)\in\Omega_L^n} J_n(\c,\u) = \liminf_{l\rightarrow\infty} J_n(\c_{k_l},\u_{k_l}) \geq J_n(\c_*,\u_*).
 %       \end{align*}
 %   \end{proof}
%\end{lem}

\noindent

\begin{thm}[\textit{Minimizing movement scheme as limit path for $\tau\to 0$}]
\label{thm:existence_of_GMM}
Let $\tau>0$ and an initial datum $(\c^0,\u^0)\in\textup{Dom}(J_n)$. Define $(\c_\tau,\u_\tau):[0,+\infty)\to\Omega_L^n$, by 
\begin{equation}\label{eq:pw_const_interp}
(\c_\tau,\u_\tau)(0):=(\c^0,\u^0)\,, \ \text{ and  }\ 
(\c_\tau,\u_\tau)(t):= (\c^k_\tau ,\u^k_\tau) \ \  \text{ for } t\in (k\tau,(k+1)\tau] \ \text{and } k\in\mathbb{N}\,,
\end{equation}
where $(\c_\tau^k,\u_\tau^k)$ as in \eqref{eq:discrete_minim_scheme}. There exists a sequence $(\tau_\ell)_{\ell\in \N}\subset (0,1)$ with $\tau_\ell\to 0$ as $\ell \to \infty$, and a curve $(\c,\u)\in AC_{\textup{loc}}^2([0,+\infty);\Omega_L^n)$ such that for all $t\in [0,+\infty)$
\begin{equation}\label{eq:convergence_of_discrete_scheme_1}
d_n((\c_{\tau_\ell,t},\u_{\tau_\ell,t}),(\c_t,\u_t))\to 0\ \ \textup{as}\ \ \ell\to+\infty\,.
\end{equation}
In particular, we have $(\c,\u)(0^+)=(\c^0,\u^0)$ and we call $(\c,\u)$  a \textit{minimizing movement}, see also \Cref{def:minimizing_movements}.
\end{thm}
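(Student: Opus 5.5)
The plan is to reduce the statement to the abstract compactness result \cite[Proposition 2.2.3]{AGS05} (equivalently, the refined Ascoli–Arzelà argument in \cite[Section 3.3]{AGS05}) by checking its hypotheses in the metric space $(\Omega_L^n,d_n)$ with the functional $J_n$ and the topology $\sigma$ induced by $d_n$ itself. The ingredients are the following. $(\Omega_L^n,d_n)$ is complete; in fact it is compact by \Cref{lem:coord_geodesics}. By \Cref{lem:lsc_ex_mins}, $J_n$ is $d_n$-lower semicontinuous. Finally, $J_n$ is bounded from below, since $\F$ is bounded below by \ref{assmpt:a2} and $\RR\geq 0$ by \ref{assmpt:a3}. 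These three facts give the coercivity assumption (there is $\tau_*>0$ with $\inf G^k_{n,\tau}>-\infty$ for $\tau<\tau_*$), the well-posedness of each step in \eqref{eq:discrete_minim_scheme}, and the compactness assumption (every $d_n$-bounded set on which $J_n$ is bounded is relatively compact).

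The key a priori estimates I would derive directly, to make the argument self-contained. Comparing the minimizer $(\c^{k+1}_\tau,\u^{k+1}_\tau)$ with the previous iterate $(\c^k_\tau,\u^k_\tau)$ in $G^k_{n,\tau}$ gives
\begin{equation*}
J_n(\c^{k+1}_\tau,\u^{k+1}_\tau)+\frac{1}{2\tau}d_n^2\big((\c^{k+1}_\tau,\u^{k+1}_\tau),(\c^k_\tau,\u^k_\tau)\big)\leq J_n(\c^k_\tau,\u^k_\tau)\,.
\end{equation*}
Summing over $k$ yields $\sum_k \frac{1}{2\tau}d_n^2(\cdot,\cdot)\leq J_n(\c^0,\u^0)-\inf J_n=:C<\infty$, where finiteness uses $(\c^0,\u^0)\in \mathrm{Dom}(J_n)$. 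From this and Cauchy–Schwarz I obtain the uniform discrete Hölder bound
\begin{equation*}
d_n\big((\c_\tau,\u_\tau)(s),(\c_\tau,\u_\tau)(t)\big)\leq \sqrt{2C}\,\sqrt{|t-s|+\tau}\,.
\end{equation*}
I also obtain a uniform $L^2$ bound on the discrete metric derivatives $|(\c^{k+1}_\tau,\u^{k+1}_\tau)-(\c^k_\tau,\u^k_\tau)|/\tau$, interpreted via the piecewise-constant interpolant.

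Next I apply the refined Ascoli–Arzelà theorem \cite[Proposition 3.3.1]{AGS05}. Pointwise relative compactness is automatic because $\Omega_L^n$ is compact. The asymptotic equicontinuity follows from $\limsup_{\tau\to0}d_n((\c_\tau,\u_\tau)(s),(\c_\tau,\u_\tau)(t))\leq \sqrt{2C}\sqrt{|t-s|}$. A diagonal argument over $[0,N]$, $N\in\N$, then gives $\tau_\ell\to0$ and a limit curve with pointwise convergence \eqref{eq:convergence_of_discrete_scheme_1} for every $t\geq 0$. To show that the limit lies in $AC^2_{\rm loc}$, I use weak $L^2$-compactness of the discrete speeds $m_\tau$: a weak limit $m\in L^2_{\rm loc}$ satisfies $d_n((\c_s,\u_s),(\c_t,\u_t))\leq\int_s^t m$ by lower semicontinuity of $d_n$ under pointwise convergence. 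Finally, $(\c,\u)(0^+)=(\c^0,\u^0)$ follows from the Hölder bound at $s=0$ after passing to the limit.

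The main obstacle is only bookkeeping. One has to match the piecewise-constant interpolation convention (left-continuous with value $(\c^k_\tau,\u^k_\tau)$ on $(k\tau,(k+1)\tau]$) with the one in \cite{AGS05}, and handle the shift by one step, which is harmless since $d_n$-increments are $O(\sqrt\tau)$. No geodesic or convexity structure is needed at this stage; compactness of $\Omega_L^n$ replaces all the general topological assumptions.
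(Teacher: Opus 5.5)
Your proposal is correct and takes the same route as the paper, which simply invokes \cite[Proposition 2.2.3]{AGS05}. The paper checks lower semicontinuity via \Cref{lem:lsc_ex_mins}, coercivity from $J_n$ being bounded below, and compactness from that of $\Omega_L^n$; your energy estimate, discrete H\"older bound, refined Ascoli--Arzel\`a argument and handling of the one-step shift in the interpolation convention just unpack what that proposition does internally.
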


\begin{rem}
Note that in the comprehensive treatment of the  theory of minimizing movements, detailed in   \cite[Chapter 2]{AGS05}, what we simply called minimizing movements are called \emph{Generalized Minimizing Movements}, denoted by ${\rm GMM}(G,u_0)$ therein. However,  we have decided to simplify the treatment of minimizing movements here for brevity.
\end{rem}

\subsection{$\lambda$-Convexity and curves of maximal slope}
\label{subsec:lambda_cvx_nonlifted}
Next, we study the $\lambda$-convexity of $J_n$ and, whenever this property holds, we derive finer properties of the minimizing movements associated with the AGF.

\begin{rem}[\textit{Scalar and geodesic $\lambda$-convexity}]
We first note the following connection between scalar and geodesic $\lambda$-convexity. Let $(X,d)$ be a geodesic metric space, $f \colon X \rightarrow\R$ and $\gamma \colon [0,1]\to X$ a geodesic. Suppose that $f\circ \gamma\colon [0,1]\to \R$ is $\alpha(\gamma)$\textit{-convex} for some $\alpha(\gamma)\in \R$\,, \ \textit{i.e.}, the map 
\[t\mapsto f(\gamma_t)-\frac{\alpha(\gamma)}{2}t^2\]
is convex. Equivalently, for every $t \in [0,1]$ it holds that
\begin{align*}
f(\gamma_t) \leq (1-t) f(\gamma_0) + t f(\gamma_1) -\frac{\alpha(\gamma)}{2}t(1-t)\,.
\end{align*}
%and the viceversa holds as well.
Comparing the last inequality with the definition of geodesic $\lambda$-convexity of $f$, cf. \eqref{eq:lambda_convexity}, one immediately sees that for scalar $\alpha(\gamma)$-convexity along every geodesic $\gamma$ to imply $\lambda$-convexity of $f$, $\alpha(\gamma)$ needs to be of the form
$\alpha(\gamma) = \lambda d^2(\gamma_0,\gamma_1)$.
\end{rem}

\noindent

To prove the $\lambda$-convexity of $J_n$ for $\lambda \in \mathbb{R}$, we will need an important assumption that asks for a compatibility between the forward operator $K$ and the geodesic structure of $\mathcal{B}$. A consequence of the $\lambda$-convexity will be then that the local slope is a strong upper gradient for $J_n$.
\begin{enumerate}[label=\textnormal{(A\arabic*)}] 
\setcounter{enumi}{7}
\item \label{assmpt:a8} There exists a constant $C:=C(K,\mathcal{B}) > 0$ such that for $m\in \{1,2\}$\,,
\begin{equation}\label{eq:bound_on_geodesics}
\sup_{t\in[0,1]}\left\|\frac{d^m}{dt^m}K(u_t)\right\|_Y \leq\ Cd^m_{\B}(u_0,u_1)\, 
\end{equation}
for all geodesics $u_t:[0,1]\to \B$.
\end{enumerate}
\begin{rem}\label{rem:assumption8}
Note that \eqref{eq:bound_on_geodesics} with $m=1$ implies the existence of $C>0$ such that 
\begin{align}\label{eq:liplike}
\|Ku - Kv\|_Y \leq Cd_{\mathcal{B}}(u,v) \quad \text{for} \quad u,v \in \B\,.
\end{align}
Such Lipschitz-like property of $K$ on $\mathcal{B}$ is inherently of metric nature and can replace \ref{assmpt:a8} in the subsequent Theorem \ref{thm:gradient_equality_many_particle}, if one were to consider curves of maximal slope with regard to so-called weak upper gradients (c.f. \cite[Definition 1.2.2]{AGS05}). However, since \eqref{eq:liplike} alone is not enough to ensure \textit{e.g.}, $\lambda$-convexity of $J_n$, we stick to \ref{assmpt:a8} to simplify the presentation.
\end{rem}
In the following, by $C>0$ we denote a generic constant that depends only on the data (\textit{e.g.} the forward operator $K$, the fidelity term $\mathcal{F}$ and the regularizer $\mathcal{R}$) and whose value is allowed to vary from line to line. The dependence of a constant on a particular parameter will be denoted by a subscript. Moreover, we remark that all derivatives of the forward operator $K$ are intended in a weak sense, \textit{i.e.}, testing against elements in the Hilbert space $Y$. We also recall the definition of $\Omega_L^n$ and $J_n$, \textit{i.e.}, \eqref{eq:Omega_n_def} and \eqref{eq:discrete_version_of_energy}.
\begin{prop}\label{prop:local_convexity}
Assume \ref{assmpt:a1}--\ref{assmpt:a8}
%, and that for the forward operator $K:\M\to Y$ there exists a constant $C:=C(K,\mathcal{B}) > 0$ such that for $m\in \{1,2\}$\,,
%\begin{equation}\label{eq:bound_on_geodesics}
%\sup_{t\in[0,1]}\left\|\frac{d^m}{dt^m}K(u_t)\right\|_Y \leq\ Cd^m_{\B}(u_0,u_1)\, 
%\end{equation}
%for all geodesics $u_t:[0,1]\to \B$. 
Then $J_n$ is $\lambda(L)$-convex on $\Omega_L^n$  for some constant $\lambda(L)\in \R$.
\begin{proof}
Let $(\c_t,\u_t)\colon[0,1]\rightarrow \Omega_L^n$ be a geodesic, cf. \eqref{eq:coordinates_of_c_u}. By Lemma \ref{lem:coord_geodesics}, for every $ j\in\{1,\dots,n\}$, the components 
\[c_t^j=(1-t) c_0^j+t c_1^j\,,\]
and $u_t^j$ are also geodesics in their target, and by the continuity of $K$, \eqref{eq:bound_on_geodesics} also holds for $m=0$.  Recalling \ref{assmpt:a6}--\ref{assmpt:a7}, we denote
\begin{equation*}
\mathcal{R}_n(t) := \RR\left(\frac{1}{n}\sum_{j=1}^n (c^j_t)^2 u_t^j\right)= \frac{1}{n}\sum_{j=1}^n (c_t^j)^2\,.
\end{equation*}
Then, by the Cauchy–Schwarz inequality and \eqref{eq: tau_dissip}, we get
\begin{equation}\label{eq:second_derivative_regularizer}
\frac{d^2}{dt^2} \mathcal{R}_n(t) = \frac{2}{n} \sum_{j=1}^n |c_1^j-c_0^j|^2\leq 2d_n^2((c_0,u_0),(c_1,u_1))\,.
\end{equation}
Next, we compute the derivatives of $K_n:[0,1] \rightarrow Y$, defined as
\begin{equation}\label{eq:K_n_oper}
K_n(t) := \frac{1}{n}\sum_{j=1}^n K((c_t^j)^2u_t^j) = \frac{1}{n}\sum_{j=1}^n (c_t^j)^2 K(u_t^j)\,,
\end{equation}
 namely
\begin{align*}
\frac{d}{dt} K_n(t) &= \frac{1}{n}\sum_{j=1}^n c_t^j\Big(2(c_1^j-c_0^j) K(u_t^j) + c_t^j\frac{d}{dt}K(u_t^j)\Big)\,, \\
\frac{d^2}{dt^2} K_n(t) =& \frac{1}{n} \sum_{j=1}^n\Big[ 2(c_1^j-c_0^j)^2K(u_t^j) +  4 c_t^j(c_1^j-c_0^j)\frac{d}{dt}K(u_t^j) + (c_t^j)^2 \frac{d^2}{dt^2}K(u_t^j)\Big]\,.
\end{align*}
Using Jensen's inequality, that $0\leq c_t^j\leq L$ for every $t\in [0,1]$ and $j\in\{1,\dots,n\}$, \ref{assmpt:a8} and the fact that $K$ is even strong-to-strong continuous (cf. \ref{assmpt:a1} and \cite[Remark 3.2]{bredies2013inverse}), we estimate 
\begin{align}\label{eq:first_order_bd_kn}
\begin{split}
\sup_{t\in[0,1]} \Big\|\frac{d}{dt} K_n(t)\Big\|_Y& \leq C_{ n,L}\sum_{j=1}^n \big(|c_1^j-c_0^j|+d_{\mathcal{B}}(u_0^j,u_1^j)\big) \\ 
&\leq C_{{ n,L}}\Big(\sum_{j=1}^n \big(|c_1^j-c_0^j|^2+d^2_{\mathcal{B}}(u_0^j,u_1^j)\Big)^{\frac{1}{2}}= C_{{ n,L}}\,d_n((c_0,u_0),(c_1,u_1))\,, 
\end{split}
\end{align}
and analogously,
\begin{align}\label{eq:K_n_bounds}
\begin{split}
\sup_{t\in [0,1]} \left\|\frac{d^2}{dt^2} K_n(t)\right\|_Y &\leq\  C_{ n,L}\sum_{j=1}^n\Big((c_1^j-c_0^j)^2+(c_1^j-c_0^j)d_{\mathcal{B}}(u_0^j,u_1^j)+d^2_{\mathcal{B}}(u_0^j,u_1^j)\Big)  \\
&\leq C_{ n,L}\sum_{j=1}^n \left(|c_1^j-c_0^j|^2+d_\B^2(u_0^j,u_1^j)\right)=C_{ n,L}d_n^2((c_0,u_0),(c_1,u_1))\,.
\end{split}
\end{align}
Finally, by the chain rule, we compute  
\begin{align*}
\frac{d^2}{dt^2} J_n(\c_t,\u_t) &  =  \frac{d}{dt} \big(\nabla \F(K_n(t)),\frac{d}{dt} K_n(t)\big)_Y  + \frac{d^2}{dt^2} \mathcal{R}_n(t)\\
&= \nabla^2\F(K_n(t))\Big[\frac{d}{dt}K_n(t),\frac{d}{dt} K_n(t)\Big]_Y +\Big(\nabla \F(K_n(t)),\frac{d^2}{dt^2} K_n(t)\Big)_Y + \frac{d^2}{dt^2} \mathcal{R}_n(t)\,,
\end{align*}
which together with \eqref{eq:second_derivative_regularizer}, \eqref{eq:first_order_bd_kn} and \eqref{eq:K_n_bounds} yields
\begin{align*}
\sup_{t\in [0,1]}\left|\frac{d^2}{dt^2} J_n(\c_t,\u_t)\right| \leq&\,
 2C_{n,L} \|\F\|_{C^{2}}\, d^2_n((c_0,u_0),(c_1,u_1))+ 2d_n^2((c_0,u_0),(c_1,u_1))\\
    \leq&\, C_{n,L,\F}\, d_n^2((c_0,u_0),(c_1,u_1))\,.
\end{align*}
Hence, there exists a $\lambda:=\lambda(n,L,K,\B,\F)<0 $ such that 
\[J_n(\c_t,\u_t)\colon[0,1] \rightarrow  \R_+\ \text{ is } \ \lambda\cdot d_n^2((c_0,u_0),(c_1,u_1))\text{-convex}\,,\] 
implying that $J_n : \Omega_L^n \rightarrow \R_+$ is $\lambda$-convex along all  unit-speed geodesics $(\c_t,\u_t)$.
\end{proof}
\end{prop}

%\begin{rem}\label{eq:interpretation_of_bounds}
%Assumption \eqref{eq:bound_on_geodesics} asks for a suitable compatibility between the forward operator $K$ and the geodesic structure on $\B$ (see also Assumption \ref{assmpt:a8} for comparison). 
%\end{rem}

\begin{example}[$\lambda$-convexity for convolutions of measures] 
In this example, we show that \ref{assmpt:a8} is fulfilled for standard convolution operators in the space of measures. Similar reasoning applies also in more general settings.
Let $\mathbb{T}$ be the unit circle, which we implicitly identify with $\R/\mathbb{Z}$,
and let us consider the linear operator $K: M(\mathbb{T}) \rightarrow L^2(\mathbb{T})$,  where 
\begin{equation}\label{ex:K_mu}
K \mu(s):= \int_{\mathbb{T}} k(s-x)\, d \mu(x) \quad \forall \mu \in M(\mathbb{T})\,,
\end{equation}
with a suitable convolution kernel $k\in C^2(\mathbb{T})$. Here, $M(\mathbb{T})$ is the space of Radon measures on $\mathbb{T}$.  Choosing as regularizer the total variation of measures $\mathcal{R}(\cdot) := \|\cdot\|_{TV}$ , it holds that 
\[\tilde{\mathcal{B}} = \{\pm \delta_x : x\in \mathbb{T}\}\,.\] 
Then take as the weakly*-closed, geodesically complete set $\mathcal{B} := \{\delta_x : x\in \mathbb{T}\}$, and as $d_{\mathcal{B}}$ the standard $2$-Wasserstein distance $\mathcal{W}_2$, which metrizes the weak*-convergence on $\tilde{\mathcal{B}}$.
Note that the unit-speed $\mathcal{W}_2$–geodesic joining $\delta_{x_0}$ to $\delta_{x_1}$ is $t \mapsto \delta_{x_t}$ where $x_t=(1-t) x_0+t x_1$, since we implicitly identified $\mathbb{T}$ with $\R/\mathbb{Z}$.
Along this geodesic we have that for every $s \in \mathbb{T}$,
\begin{equation*}
K\delta_{x_t}(s)=\int_{\mathbb{T}} k(s-x)\, d \delta_{x_t}(x)=k(s-x_t)\,,
\end{equation*}
which is twice differentiable in $t$. In particular, by the chain rule, we obtain
\begin{equation*}
 \frac{d}{dt} K\delta_{x_t}(s)=k'(s-x_t)(x_0-x_1)\,, \quad    \frac{d^2}{d t^2} K\delta_{x_t}(s)=k''(s-x_t)(x_0-x_1)^2\,.
\end{equation*}
Thus, the map $t\mapsto K\delta_{x_t}$ satisfies \eqref{eq:bound_on_geodesics}.
\end{example}

\begin{example}[Necessity of local convexity]
In this example we show that Proposition \ref{prop:local_convexity} cannot, in general, be improved to a global
$\lambda$–convexity statement on $\R_+\times\B$.
 Let $\M:=M([0,1])$ and $\mathcal{R}(\cdot):=\|\cdot\|_{TV}$, and set again $\B:=\{\delta_x:\ x\in[0,1]\}$. We define the linear map $K :\mathcal{M} \rightarrow \mathbb{R}$ as 
\begin{align*}
K\mu := \int_0^1 y \,d \mu(y)\,,    \end{align*}
and consider $\F(y) := y^2$ and $n=1$. 
If  $(c,\mu)\in \R_+\times \B$,  with $ \mu= \delta_x$ for some $x\in [0,1]$, then
\begin{align}
J_1(c, \mu)=\F\!\big(K(c^2\delta_x)\big)+c^2
=\F(c^2 x)+c^2=(c^2 x)^2+c^2=c^4x^2+c^2.
\end{align}
With the $\ell_2$-product metric, geodesics in $\R_+\times\B$ have the form 
\[(c_t,u_t) = \left((1-t)c_0+tc_1, \delta_{(1-t)x_0+tx_1}\right)\,.\]
Along such geodesics we may view $J_1$ as the function
$f(c,x):=c^4 x^2+c^2$ on $\R_+\times[0,1]$. Its Hessian at $(c,x)$ becomes
\begin{align}
\nabla^2 f(c,x)
=
\begin{pmatrix}
12c^2x^2+2\, \ & 8c^3x\\[2pt]
8c^3x\, \ & 2c^4\,,
\end{pmatrix}
\end{align}
so that
\begin{align}
\det  \nabla^2 f(c,x)=\bigl(12c^2x^2+2\bigr)\cdot 2c^4-(8c^3x)^2
=4c^4\bigl(1-10c^2x^2\bigr)\,.
\end{align}
Hence, for every fixed $x\in(0,1]$ and all sufficiently large $c$, one has $\det  \nabla^2 f(c,x)<0$,
so $\nabla^2 f(c,x)$ has a negative eigenvalue. In particular, if $\lambda_{\min}(c,x)$ denotes the minimal eigenvalue of $\nabla^2f(c,x)$, then
$\lambda_{\min}(c,x)\to-\infty$ as $c\to\infty$. Therefore, $J_1$ is \emph{only locally} semiconvex $\R_+\times \B$, \textit{i.e.}, semiconvex on $\Omega_L$ for all $ L>0$.
\end{example}

Proposition \ref{prop:local_convexity} yields as a first consequence that the local slope is a strong upper gradient according to Definition \ref{def:Strong_upper_gradient}, even with non-compact weights, so on
\begin{equation}\label{eq:Omega_infty_B}
\Omega^n_\infty:=[0,\infty)^n\times \B^n\,.
\end{equation}
Recall from \Cref{def:local_slope} that the slope is a local concept, hence there is no need to write, \textit{e.g.}, $\left|\partial J_n|_{\Omega_L^n}\right| $ instead of just always $|\partial J_n|$. \\
\begin{lem}\label{lem:sugJn}
Assume \ref{assmpt:a1}-\ref{assmpt:a5} and that the forward operator $K:\M\to Y$ fulfills \ref{assmpt:a8}. Then, the local slope $|\partial J_n|$ is a strong upper gradient for $J_n$ on $\Omega^n_\infty %[0,\infty)^n\times \B^n
$.
\begin{proof}
By Proposition \ref{prop:local_convexity}, $J_n$ is $\lambda(L)$-geodesically convex, hence by \cite[Corollary 2.4.10]{AGS05} the metric slope 
$|\partial J_n|$ is a strong upper gradient for $J_n$ on $\Omega_L^n$.
Now, let $\mathbf{v}\in AC([0,1]; \Omega_\infty^n)$ and take $L>0$ large enough so that $\mathbf{v}_t \in \Omega_L^{n}$ for every $t\in[0,1]$. 
%Note that such $L$ exists since $\mathbf{v}$ admits a continuous extension to $[a,b]$ \BBB for every $0\leq a<b<+\infty$, \EEE as $\B^n$ is complete.
Then, for all $0\leq s\leq t\leq 1$, locality of the metric slope gives
\begin{align*}
|J_n(\mathbf{v}_t) - J_n(\mathbf{v}_s)|
\leq \int_s^t |\partial J_n|(\mathbf{v}_r)\,|\mathbf{v}'_r|\,\mathrm{d}r,
\end{align*}
since $|\partial J_n|$ is a strong upper gradient for $J_n$ on $\Omega_L^n$. Thus $|\partial J_n|$ is also a strong upper gradient for $J_n$ on $\Omega_\infty^n$.
\end{proof}
\end{lem}
\noindent

This gives the existence of curves of maximal slope, again without the need for compactness, for which we recall \Cref{thm:existence_of_GMM} and \eqref{eq:Omega_infty_B}\,.
\begin{thm}
\label{thm:existence_of_gfs_nonlifted}
Assume \ref{assmpt:a1}-\ref{assmpt:a8}. Then every minimizing movement $(\c_t,\u_t)$ in $\Omega_\infty^n$ is a curve of maximal slope for $J_n$ with regard to $|\partial J_n|$. 
In addition, minimizing movements in $\Omega^n_L$ are also curves of maximal slope.
    \begin{proof}
Let $(\c^0,\u^0) \in \Omega_\infty^n$, and let $(\c_t,\u_t)$ be a minimizing movement for $J_n$ in $\Omega_\infty^n$. First, assume that there exists $L>0$  such that  $c_t^j \in [0,L]$ for all $j\in \{1,\dots,n\}$ and $t\geq 0$. 
 
By \Cref{thm:existence_of_GMM}, there exist discrete solutions $(\c_{\tau_\ell,t},\u_{\tau_\ell, t}) \in \Omega^n_{\infty}$ such that
\begin{equation}\label{eq:l_pw_constant_interpolation}
d_n((\c_{\tau_\ell,t},\u_{\tau_\ell,t}),(\c_t,\u_t))\to 0\ \ \textup{as}\ \ \ell\to+\infty\,.
\end{equation}
Since $c_t^j \leq L$, for $\ell$ big enough, $(\c_{\tau_\ell,t},\u_{\tau_\ell,t}) \in \Omega_{\tilde L}^{n}$ for $\tilde L > L$.
This implies that the limit is a minimizing movement in $\Omega_{\tilde L}^n.$
Now using \cite[Corollary 2.4.11]{AGS05} together with Proposition \ref{prop:local_convexity} gives that $(\c_t,\u_t)$ is also a curve of maximal slope for $J_n|_{\Omega_{\tilde L}^{n}}$ with regard to $|\partial J_n|_{\Omega_{\tilde L}^{n}}|$. Since all of the involved terms are local, $(\c_t,\u_t)$ is a curve of maximal slope for $J_n$ with regard to $|\partial J_n|$.\\
So the only thing left to prove is that for each minimizing movement $(\c_t,\u_t)$ in $\Omega_\infty^n$, there exists some $L>0$ such that  $c_t^j \in[0,L]$ for all $j\in \{1,\dots,n\}$  and $t\geq 0$.  By \ref{assmpt:a6}-\ref{assmpt:a7}, and as $ \mathcal{F}$ is bounded from below by \ref{assmpt:a2}, it holds that 
\begin{equation}\label{eq:c_t_j_bdd_1}
\frac{1}{n}\sum_{j=1}^n (c_t^j)^2  =\mathcal{R}\Big(\frac{1}{n}\sum_{j=1}^n (c_t^j)^2u_t^j\Big)= J_n(\c_t,\u_t) -  \mathcal{F}\left(\frac{1}{n} \sum_{j=1}^n (c_t^j)^2Ku_t^j\right) \leq J_n(\c_t,\u_t) + C(F)\,.
\end{equation}
As for every $\tau>0$, \eqref{eq:discrete_minim_scheme} 
and \eqref{eq:energy_and_dissip_fct} yield 
\begin{align*}
J_n(\c_\tau^1,\u_\tau^1) \leq J_n(\c_\tau^{1},\u_\tau^{1}) + \frac{1}{2\tau}d_n^2((
\c_\tau^{1},\u_\tau^{1})),(\c^0,\u^0)) \leq J_n(\c^0,\u^0)\,,
\end{align*}
and thus iteratively for all $k\in \N$
\begin{align}\label{eq:monot}
J_n(\c_\tau^k,\u_\tau^k) \leq J_n(\c_\tau^{k-1},\u_\tau^{k-1}) \leq J_n(\c^0,\u^0)\,.
\end{align}
In view of \eqref{eq:pw_const_interp}, \eqref{eq:l_pw_constant_interpolation} (taking the limit $\ell \rightarrow \infty$) and the lower semicontinuity of $J_n$ with respect to $d_n$ imply that $J_n(\c_t,\u_t) \leq J_n(\c^0,\u^0)$, which together with \eqref{eq:c_t_j_bdd_1} and the fact that $n$ is fixed imply that the $c_t^j$ stay indeed uniformly bounded for all $j\in \{1,\dots,n\}$ and $t\geq 0$. 
\end{proof}
\end{thm}

\subsection{NPC of the extremal points and uniqueness of the flow}
\label{subsec:npc_nonlifted}
We now turn to the issue of uniqueness of the AGF with respect to the functional $J_n$ introduced in \eqref{eq:discrete_version_of_energy}. Here, we show that if the metric space $\mathcal{B}$ is non-positively curved (NPC), cf. Definition \ref{def:NPC}, we can ensure such a uniqueness property. To this end, in \Cref{sec:met_th} we recall a local uniqueness result, see \Cref{thm:uniqueness+regularity_mm_in_npc_case}, which we extend below to a global one. We remind the reader that uniqueness of gradient flows on NPC metric spaces was extensively studied in \cite{gradient_flows_npc}, and among other works, this analysis was further refined in \cite[Chapter 4]{AGS05}.

First, it is well-known that $\B$ being NPC implies that $\Omega_L^n$ is as well. This is essentially a consequence of \Cref{lem:coord_geodesics}, and of choosing the $\ell_2$-metric on the product space $\Omega_L^n$.

\begin{lem}[NPC of product space]
\label{lem:npc_of_product}
If $(\B,d_\B)$ is a space of NPC according to \Cref{def:NPC}, then so is $(\Omega_L^n,d_n)$.
\end{lem}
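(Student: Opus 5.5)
The plan is to verify the NPC inequality \eqref{eq:convexity_of_dist_along_curve} for $(\Omega_L^n,d_n)$ coordinatewise and then sum. First, $(\Omega_L^n,d_n)$ is a geodesic space by \Cref{lem:coord_geodesics}. Fix a geodesic $(\c_t,\u_t)$ in $\Omega_L^n$ and a point $(\mathbf{w},\mathbf{v})\in\Omega_L^n$. By \Cref{lem:coord_geodesics}, each $c_t^j$ is a geodesic in $[0,L]$, so $c^j_t=(1-t)c^j_0+tc^j_1$, and each $u_t^j$ is a geodesic in $(\B,d_\B)$.

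Next we establish the coordinatewise inequalities. For the weights we use the Euclidean parallelogram-type identity
\begin{equation*}
|c_t^j-w^j|^2=(1-t)|c_0^j-w^j|^2+t|c_1^j-w^j|^2-t(1-t)|c_0^j-c_1^j|^2,
\end{equation*}
which follows by direct expansion and holds with equality. For the atoms, the NPC assumption on $(\B,d_\B)$, i.e.\ \eqref{eq:convexity_of_dist_along_curve} applied to the geodesic $u^j_t$ and the point $v^j$, gives
\begin{equation*}
d_\B^2(u_t^j,v^j)\leq(1-t)d_\B^2(u_0^j,v^j)+t\,d_\B^2(u_1^j,v^j)-t(1-t)d_\B^2(u_0^j,u_1^j).
\end{equation*}

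Adding the two estimates for each $j$, summing over $j$, and dividing by $n$ recovers exactly \eqref{eq:convexity_of_dist_along_curve} for $d_n$ by the definition \eqref{eq: tau_dissip}, since every term in the inequality is linear in the squared distances. The only subtle point is that NPC must hold for \emph{every} geodesic of $\Omega_L^n$, not just for those built coordinatewise. This is exactly what the characterization in \Cref{lem:coord_geodesics} provides: every geodesic of $\Omega_L^n$ has geodesic coordinates. I would double-check that this lemma applies with the constant-speed parametrization on $[0,1]$ used in \Cref{def:NPC}. Apart from that, the argument is routine.
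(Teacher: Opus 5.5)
Your proof is correct and follows the same route as the paper, which treats the lemma as a consequence of \Cref{lem:coord_geodesics} and the $\ell^2$-structure of $d_n$: coordinatewise NPC inequalities for $[0,L]$ (with equality) and for $(\B,d_\B)$, then summed over $j$ and divided by $n$. The parametrization point you flag is harmless, because in an $\ell^2$-product every constant-speed geodesic on $[0,1]$ has constant-speed geodesic coordinates; this follows from equality in Minkowski's inequality applied to the coordinate distances $d(\gamma_0^j,\gamma_t^j)$ and $d(\gamma_t^j,\gamma_1^j)$.
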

%\begin{proof}
%First, $(\Omega_L^n,d_n)$ is a geodesic metric space by \Cref{lem:coord_geodesics}, which also implies that given a geodesic $(\c_t,\u_t)\in \Omega_L^n$, also $ c_t^j\colon [0,1] \to [0,L] $ and $u_t^j\colon [0,1] \to \B$ are geodesics for every $j\in \{1, \dots, n\}$.  \\
%Thus for $(\tilde{c},\tilde{u})\in \Omega_L^n$, the NPC property of $([0,L], |\cdot|)$ and of $(\B,d_\B)$ imply that
%\begin{align*}
%\label{eq:existence_geodesic}
%d_n^2((c_t, u_t),(\tilde{c},\tilde{u})) =&\ \sum_{j=1}^n \Big(|c_t^j-\tilde{c}^{j}|^2 + d_\B^2( u_t^j,\tilde{u}^j)\Big) \\
%\leq& \ \sum_{j=1}^n (1-t)\left(|c_0^j-\tilde{c}^j|^2 + d_\B^2(u_0^j,\tilde{u}^j)\right) + \sum_{j=1}^n t\left(|c_1^j-\tilde{c}^j|^2 + d_\B^2( u_1^j,\tilde{u}^j)\right)\\ 
%&\ - \sum_{j=1}^n t(1-t)\left(|c_0^j-c_1^j|^2 + d_\B^2(u_0^j, u_1^j)\right) \\
%=&\ (1-t)d_n^2(( c_0,u_0),(\tilde{c},\tilde{u})) + td_n^2(( c_1,u_1),(\tilde{c},\tilde{u}))- t(1-t) d_n^2(( c_0,u_0),( c_1,u_1))\,,
%\end{align*}
%so $d_n$ fulfills \eqref{eq:convexity_of_dist_along_curve}, and thus also $\Omega_L^n$ is NPC.
%\end{proof}
We are then ready to show uniqueness, provided that $(\B,d_\B)$  is NPC. Uniqueness also comes with contraction estimates between minimizing movements with different initial points. 
\begin{thm}\label{thm: uniqueness}
\label{cor:global_uniqueness_gf_non_lifted}
Assume \ref{assmpt:a1}-\ref{assmpt:a8} and that $(\B,d_\B)$ is NPC. Then, for every initial point $(\c^0,\u^0)\in \Omega_\infty^n$, there exists a unique minimizing movement for $J_n$. Moreover, for every other initial point  $(\tilde \c^0, \tilde \u^0)\in \Omega_\infty^n$, and corresponding minimizing movement $(\tilde{\c}_t,\tilde{\u}_t)$ the following contraction estimate holds:
\begin{equation}\label{eq:contract_prop_bulk}
d_n\big((\c_t,\u_t),(\tilde{\c}_t,\tilde{\u}_t)\big) \leqslant  e^{-\lambda t}d_n\big((\c^0,\u^0),(\tilde{\c}^0,\tilde{\u}^0)\big) \quad \text{for } \mathcal{L}^1\text{-a.e. }t>0\,,
\end{equation}
for a $\lambda\in \R$ depending only on $|\c^0|$ and $|\tilde{\c}^0|$.
\end{thm}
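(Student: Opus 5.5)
The plan is to reduce the statement to the local uniqueness and contraction result recalled in \Cref{sec:met_th} (\Cref{thm:uniqueness+regularity_mm_in_npc_case}, in the spirit of \cite[Chapter 4]{AGS05}). That result applies to a $\lambda$-convex, lower semicontinuous functional on a complete NPC space. The only obstacle to applying it directly on $\Omega_\infty^n$ is that $J_n$ is merely locally $\lambda$-convex (see the example on necessity of local convexity). We therefore first confine all relevant dynamics to a fixed $\Omega_L^n$ and then apply the theorem there.

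\emph{Step 1: a priori bound on the weights.} Let $(\c^0,\u^0),(\tilde\c^0,\tilde\u^0)\in\Omega_\infty^n$. As in the proof of \Cref{thm:existence_of_gfs_nonlifted}, the monotonicity \eqref{eq:monot} of the discrete scheme and lower semicontinuity give $J_n(\c_t,\u_t)\le J_n(\c^0,\u^0)$ along every minimizing movement. Combined with \eqref{eq:c_t_j_bdd_1}, this yields
\[
\max_j (c_t^j)^2\le n\big(J_n(\c^0,\u^0)+C(\F)\big).
\]
We then bound $J_n(\c^0,\u^0)$ in terms of $|\c^0|$. By \ref{assmpt:a6}, $\RR$ contributes $\frac1n|\c^0|^2$. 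For the fidelity term, $\|K(\frac1n\sum_j (c^j)^2u^j)\|_Y\le C|\c^0|^2$, since $K\B$ is bounded by weak$^\ast$-compactness and \ref{assmpt:a5}, and $\F$ is continuous. Consequently there is $L=L(|\c^0|,|\tilde\c^0|)$ such that every minimizing movement from either datum, and every discrete iterate $(\c^k_\tau,\u^k_\tau)$ of both schemes, stays in $\Omega_L^n$. In particular, any minimizing movement in $\Omega^n_\infty$ is also a minimizing movement in $\Omega^n_L$ (possibly after enlarging $L$ slightly, as in \Cref{thm:existence_of_gfs_nonlifted}).

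\emph{Step 2: verify the hypotheses on $\Omega_L^n$.} By \Cref{lem:npc_of_product}, $(\Omega_L^n,d_n)$ is NPC, and it is complete since it is compact by \Cref{lem:coord_geodesics}. By \Cref{lem:lsc_ex_mins}, $J_n$ is lower semicontinuous. By \Cref{prop:local_convexity}, $J_n$ is $\lambda(L)$-convex along geodesics.

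The point needing care is that the AGS uniqueness theory uses convexity of $G^k_{n,\tau}$ along geodesics. On an NPC space, $\frac{1}{2\tau}d_n^2(\cdot,w)$ is $\frac1\tau$-convex by \eqref{eq:convexity_of_dist_along_curve}, so $G^k_{n,\tau}$ is $(\frac1\tau+\lambda)$-convex along every geodesic. Moreover, $\Omega_L^n$ is geodesically convex: $[0,L]$ is an interval and geodesics in $\B$ stay in $\B$, so geodesics between points of $\Omega_L^n$ remain in $\Omega_L^n$. This is exactly the setting of \Cref{thm:uniqueness+regularity_mm_in_npc_case}.

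\emph{Step 3: conclude.} Applying that theorem on $\Omega_L^n$ gives uniqueness of the minimizing movement from $(\c^0,\u^0)$ and the contraction estimate \eqref{eq:contract_prop_bulk} with $\lambda=\lambda(L)$. By Step 1, $\lambda$ depends only on $|\c^0|$ and $|\tilde\c^0|$.

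Uniqueness in $\Omega_\infty^n$ then follows. Any two minimizing movements from the same datum lie in $\Omega_L^n$ by Step 1, and are minimizing movements there, hence coincide. This globalization is the main obstacle: one must make sure that the AGS contraction argument applied in $\Omega_L^n$ is not affected by the boundary $c^j=L$. If a theorem requiring an open or boundaryless domain were needed, the first thing to try is enlarging $L$ to $2L$, so that iterates lie strictly inside the domain. This is possible because the bound in Step 1 is uniform in $\tau$.
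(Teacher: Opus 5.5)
Your proposal is correct and follows essentially the same route as the paper: use \eqref{eq:monot}, \ref{assmpt:a6}--\ref{assmpt:a7} and the lower bound on $\F$ to confine the discrete iterates and the flow to a fixed $\Omega_L^n$, then invoke \Cref{lem:npc_of_product}, \Cref{prop:local_convexity} and \Cref{thm:uniqueness+regularity_mm_in_npc_case} there. Your Step 1 is more careful than the paper, which only records $L=L(n,(\c^0,\u^0))$, but two points need adjusting. To bound $\F$ you should use that $\{\frac1n\sum_j (c^j)^2Ku^j\}$ over $\Omega^n_{|\c^0|}$ is compact in $Y$ by \ref{assmpt:a5}, since continuity of $\F$ alone does not bound it on bounded sets. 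Your worry about the boundary $c^j=L$ is also unfounded, because the appendix theorem applies directly to the complete NPC space $\Omega_L^n$ itself.
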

\begin{proof}
Again by \ref{assmpt:a6}-\ref{assmpt:a7}, and as $\F$ is bounded from below by a constant $-C(\mathcal{F})$, it holds that  
\begin{equation*}
J_n(\c,\u) = \F\left( \frac{1}{n}\sum_{j=1}^n(c^j)^2K(u^j)\right) +  \frac{1}{n}\sum_{j=1}^n (c^j)^2 \geq  \frac{1}{n}\sum_{j=1}^n(c^j)^2 - C(\mathcal{F})\,,
\end{equation*}
\textit{i.e.}, 
\[\frac{1}{n}\sum_{j=1}^n (c^j)^2 \leq J_n(\c,\u) + C(\mathcal{F})\,.\] 
Assume that $(\c_t,\u_t)$ is a minimizing movement with initial point $(\c^0,\u^0)$. Then by the previous estimate and \eqref{eq:monot}, we have that $(\c_t,\u_t) \in \Omega_L^{n}$ for some $L=L(n,(\c^0,\u^0))$ large  enough. Since $J_n$ is $\lambda$-convex on $\Omega_n^L$ by Proposition \ref{prop:local_convexity}, minimizing movements in $\Omega_L^{n}$ exist, and fulfill the contractivity property \eqref{eq:contract_prop_bulk} by \Cref{thm:uniqueness+regularity_mm_in_npc_case}, cf. \eqref{eq:contract_prop}.
\end{proof}

\section{The lifted problem}\label{sec:3_lifted}
To further our analysis, we follow a lifting approach to the Wasserstein space, detailed in \cite{BCFW23}. This leads to a comparison  of Atomic Gradient Flows in $\Omega_L^n$ versus metric gradient flows in $ \mathcal{P}_2(\Omega_L^n)$ in \Cref{sec:equivalence_gfs}, which can be thought in spirit similar to the investigation of Chiz\'at and Bach in \cite{chizat2018global}. A key difference here is that lifted particles are Dirac deltas supported on elements of $\Omega_L^n$. In particular, for $TV$-regularized problems, where extremal points are Dirac deltas, lifted particles would be elements of the form $\delta_{\delta_x}$. However, it is clear that this representation of lifted particles is isometric to $\delta_x$, so that the lifted formulation remains fully consistent with the perspective of Chiz\'at and Bach (more details will be given in Section \ref{sec:examples}).   \\
Note that to keep the lifted problem equivalent to the original problem \eqref{eq:P_u} on $\M$, (only) in this section one has to work again with the full set of extremal points $\tilde{\mathcal{B}}$ of \eqref{eq:extreme_ball}. 
%\begin{equation*}
%\tilde{\B} = \overline{\mathrm{Ext}(\{u\in \M:\mathcal{R}(u)\leq 1\})}^\ast.
%\end{equation*}

\begin{defin}[\textit{Lifting in the space of positive measures}]\label{def:lifting_in_measures}
We consider the lifting of \eqref{eq:P_u} to the space of positive measures on $\tilde{\B}$ as follows:
\begin{equation}\label{eq:P_mu}
\inf_{\mu \in M^+(\tilde{\B})} j(\mu)\,, \quad \text{where} \ \ j(\mu):=\F(\K\mu)+ \|\mu\|_{TV}\,,
\end{equation}
where $M^+(\tilde{\B})$ is the cone of positive (Radon) measures on $\tilde{\B}$. The forward operator $\K:M^+(\tilde{\B})\to Y$ is set as 
\begin{equation}\label{eq:curly_K}
\K\mu:=K\I(\mu)\,,
\end{equation}
where the representative $\I(\mu)\in \M$ is defined via the following version of \textit{Choquet's theorem}.
\end{defin}
\begin{prop}[\textit{Version of Choquet's theorem}, {\cite[Proposition 5.2]{BCFW23} \& \cite[page 14]{P01}}]
\label{Choquet}
Every measure $\mu\in M^+(\tilde{\B})$ defines the linear action of some $\I(\mu)\in\textup{Dom}(\mathcal{R})$ via duality, namely, 
\begin{equation}\label{eq:Choquet}
\langle \I(\mu), p\rangle=\int_{\tilde{\B}}\langle v,p\rangle\,d\mu(v)\,\ \ \forall p\in \C\,.
\end{equation}
Furthermore, the map $\I:M^+(\tilde{\B})\to\textup{Dom}(\mathcal{R})\subset \mathcal{M}$ is a linear surjection and $\I(\mu)$ is called the (weak-) barycenter of $\mu$.
\end{prop}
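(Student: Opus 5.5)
The plan is to establish three things in turn: that \eqref{eq:Choquet} defines an element $\I(\mu)\in\M$, that this element lies in $\textup{Dom}(\mathcal{R})$ with $\mathcal{R}(\I(\mu))\leq \|\mu\|_{TV}$, and that every $u\in\textup{Dom}(\mathcal{R})$ arises in this way.

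\textbf{Well-definedness and linearity.} For fixed $p\in\C$, the map $v\mapsto\langle v,p\rangle$ is weak$^\ast$-continuous on $\tilde{\B}$, hence continuous and bounded on the compact metric space $(\tilde{\B},d_{\tilde{\B}})$ (Remark \ref{rem:metrizability_of_B}). The integral is therefore defined. Moreover $\tilde{\B}\subset B$ is norm bounded by \ref{assmpt:a4}, say by $M$. Consequently $p\mapsto\int\langle v,p\rangle\,d\mu$ is a linear functional on $\C$ bounded by $M\|\mu\|_{TV}\|p\|_{\C}$, so it defines $\I(\mu)\in\M=\C^\ast$. Linearity of $\I$ in $\mu$ (on the cone, and then extended) is immediate.

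\textbf{$\I(\mu)\in\textup{Dom}(\mathcal{R})$.} If $\mu=0$ there is nothing to show, so normalize $\mu$ to a probability measure $\bar\mu=\mu/\|\mu\|_{TV}$. Since $\tilde{\B}$ is a compact metric space, $\bar\mu$ is the weak limit of empirical measures $\frac1k\sum_i\delta_{v_i^k}$. Testing against the continuous functions $v\mapsto\langle v,p\rangle$ gives $\frac1k\sum_i v_i^k\ws \I(\bar\mu)$. Each such convex combination lies in $B$ by convexity of $\mathcal{R}$, and $B$ is weak$^\ast$-closed by \ref{assmpt:a3}. Hence $\mathcal{R}(\I(\bar\mu))\leq1$, and by one-homogeneity $\mathcal{R}(\I(\mu))\leq\|\mu\|_{TV}<\infty$. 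An alternative route is a Hahn--Banach separation argument against closed half-spaces containing $B$.

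\textbf{Surjectivity.} This is the main step and uses Choquet theory. Take $u\in\textup{Dom}(\mathcal{R})$. If $u=0$, choose $\mu=0$. Otherwise set $w=u/\mathcal{R}(u)\in B$. By Krein--Milman, $B=\overline{\mathrm{co}}^\ast(\tilde{\B})$, so there are finite convex combinations $w_k=\sum_i\alpha_i^k v_i^k\ws w$ with $v_i^k\in\tilde{\B}$. Define the probability measures $\mu_k=\sum_i\alpha_i^k\delta_{v_i^k}$ on the compact space $\tilde{\B}$. By Prokhorov (or Banach--Alaoglu in $C(\tilde{\B})^\ast$), a subsequence converges narrowly to some probability measure $\bar\mu$. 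For each $p$, $\langle\I(\mu_k),p\rangle=\langle w_k,p\rangle$ because $\I$ of a discrete measure is the corresponding sum. Passing to the limit on the left uses continuity of $v\mapsto\langle v,p\rangle$, and on the right uses weak$^\ast$ convergence. This yields $\I(\bar\mu)=w$, so $\mu=\mathcal{R}(u)\bar\mu$ satisfies $\I(\mu)=u$.

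The main obstacle I expect is this limit passage. It is where separability of $\C$ matters: it makes $\tilde{\B}$ a compact metric space, so that the test functions are continuous and sequential compactness of the measures is available. One must also check that the Krein--Milman closure can be taken sequentially, which holds because $B$ is weak$^\ast$-compact and metrizable.
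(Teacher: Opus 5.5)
Your proof is correct, but it takes a different route from the one the paper relies on. The paper gives no argument of its own. It defers to \cite[Proposition 5.2]{BCFW23} and to Choquet's theorem for metrizable compact convex sets \cite[page 14]{P01}. There, for $u\neq 0$, Choquet's theorem applied to $B$ gives a probability measure with barycenter $u/\mathcal{R}(u)$ that is concentrated on $\mathrm{Ext}(B)$ itself (a $G_\delta$ subset of $B$), and one then rescales.

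Despite your description, your surjectivity step never uses Choquet's theorem. It combines only three ingredients: Krein--Milman, sequential approximation by finite convex combinations, and narrow compactness of probability measures on the compact metric space $\tilde{\B}$. The sequential approximation is legitimate, as you note, because $B$ is weak$^\ast$-compact and norm bounded and $\C$ is separable. What you have proved is the elementary representation theorem for points of the closed convex hull of a compact set.

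Your construction gives a representing measure on the closure $\tilde{\B}$, but with no control on the mass it places on $\tilde{\B}\setminus\mathrm{Ext}(B)$. The Choquet route gives $\mu(\tilde{\B}\setminus\mathrm{Ext}(B))=0$. Since the proposition only asks for $\mu\in M^+(\tilde{\B})$, your weaker conclusion suffices. Your argument has the advantage of being self-contained and elementary. It also makes explicit two facts: $\mathcal{R}(\I(\mu))\leq\|\mu\|_{TV}$, and the representing measure can be chosen with $\|\mu\|_{TV}=\mathcal{R}(u)$. These are exactly what is needed for the equivalence \eqref{eq:equiv_of_both_problems_nonbounded}.

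One small point: dividing by $\mathcal{R}(u)$ tacitly uses $\mathcal{R}(u)>0$ for $u\neq 0$. This follows from \ref{assmpt:a4}. Otherwise the whole ray $\{\lambda u\colon \lambda\geq 0\}$ would lie in the weak$^\ast$-compact, hence norm bounded, set $S_0^-(\mathcal{R})$.
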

For $\alpha >0$, we also write $M^+_\alpha(\tilde{\B}) := \{\mu\in M^+(\tilde{\B}) : \|\mu\|_{TV}\leq \alpha\}$. We can now state the equivalence between the optimization of the lifted problem and the original one.

\begin{prop}\label{prop:equivalence_of_problems}
By \cite[page 4 and Theorem 5.4]{BCFW23}, we have the equivalence
\begin{align}\label{eq:equiv_of_both_problems_nonbounded}
\min_{u\in \M}J(u)=\min_{\mu\in M^+(\tilde{\B})} j(\mu)\,.
\end{align}
In addition, there exists an $\alpha = \alpha(J) >0$ such that
\begin{align}\label{eq:equiv_of_both_problems}
\min_{u\in \M}J(u)=\min_{\mu\in M^+_\alpha(\tilde{\B})} j(\mu)\,.
\end{align}
\begin{proof}
The statement in \eqref{eq:equiv_of_both_problems_nonbounded} is already proved in \cite{BCFW23}, while for   \eqref{eq:equiv_of_both_problems} it suffices to consider a minimizer $u_*\in \underset{u\in \M}{\argmin} J(u)$, and  $\alpha > \mathcal{R}(u_\ast)$.
\end{proof}
\end{prop}

\subsection{Lifting the problem in Wasserstein space}
\label{sec:lifting_wasserstein_space}
Recalling \eqref{eq:Omega_n_def} and the subsequent notation, let $\tilde \Omega_L:=[0,L]\times \tilde{\mathcal{B}}$
for a fixed $L> \sqrt{\alpha}$, with $\alpha>0$ as in \Cref{prop:equivalence_of_problems}. Denote by $\mathcal{P}_2(\tilde \Omega_L)$ the space of probability measures in $\tilde \Omega_L$ with finite second moment endowed with the Wasserstein-2 metric. 
In particular, since the metric $d_{\tilde \B}$ is bounded, we can write
\begin{equation}\label{eq:def_P_2_Wasserstein}
\mathcal{P}_2(\tilde \Omega_L):=\Big\{\nu \in M^+(\tilde \Omega_L)\colon \nu(\tilde \Omega_L)=1\Big\}\,.
\end{equation}
For every $\nu_1,\nu_2\in \mathcal{P}_2(\tilde \Omega_L)$, their Wasserstein 2-distance is defined as 
\begin{equation}\label{eq:Wasserstein_2}
\W_2(\nu_1,\nu_2):=\sqrt{\inf_{\gamma\in \Gamma_{\nu_1,\nu_2}}\int_{ \tilde \Omega_L\times  \tilde \Omega_L}d^2_{\Omega}(\omega_1,\omega_2)\,d\gamma(\omega_1,\omega_2)}\,,
\end{equation}
where 
\begin{equation}\label{eq:Gamma}
\Gamma_{\nu_1,\nu_2}:=\{\gamma\in \mathcal{P}_2( \tilde \Omega_L\times  \tilde \Omega_L)\colon (\pi_1)_\#(\gamma)=\nu_1, (\pi_2)_{\#}(\gamma)=\nu_2 \}\,.
\end{equation}
In \eqref{eq:Gamma}, for $i=1,2$, we have denoted by $\pi_i: \tilde \Omega_L\times \tilde \Omega_L\to \tilde \Omega_L$ the $i$-th coordinate projection. As a reminder,  cf. \eqref{eq:d_omega_def}, the distance $d_{\Omega}: \tilde \Omega_L\to \R_+$ is defined as follows. For $\omega_i:=(c_i,u_i)\in  \tilde \Omega_L$, we set 
\begin{equation}\label{eq:d_Omega}
d_{\Omega}(\omega_1,\omega_2):=\left(|c_1-c_2|^2+d^2_{ \tilde\B }(u_1,u_2)\right)^{1/2}\,.
\end{equation}
The infimum in \eqref{eq:Wasserstein_2} is actually a minimum, cf. \cite[Section 7.1]{AGS05},  and the set of all possible corresponding minimizers will be denoted by $\Gamma_0(\nu_1,\nu_2)$. 
\noindent
We define the homogeneous projection operator $\pi:\mathcal{P}_2( \tilde \Omega_L)\to M^+( \tilde\B )$ via 
\begin{equation}\label{eq:pi_nu}
\int_{ \tilde\B }\psi(u)\,d[\pi\nu](u):=\int_{ \tilde \Omega_L}c^2\psi(u)\,d\nu(c,u)\,\quad \forall \nu\in \mathcal{P}_2( \tilde \Omega_L)\,, \psi\in C(\tilde\B)\,.
\end{equation}
\noindent Note that the homogeneous projection is a typical tool for addressing the unbalanced formulation of Optimal Transport and is also one way to define the Hellinger-Kantorovich distance \cite{liero2018optimal}.
Thanks to this homogeneous projection we can then transform the problem in the right hand side of \eqref{eq:P_mu} into a minimization problem in $\mathcal{P}_2( \tilde \Omega_L)$ in the following way.
\begin{defin}[\textit{Lifting in the space of probability measures}]
We consider the problem
\begin{equation}\label{eq: P_Wasserstein}
 \inf_{\nu \in \mathcal{P}_2(\tilde \Omega_L)} \mathcal{J} (\nu)\,, \ \text{where}  \ \mathcal{J} (\nu):=j(\pi\nu)\,.
\end{equation}
\end{defin}
\noindent Then, the functional $\mathcal{J}$ can be written more explicitly as
\begin{equation*}
\mathcal{J}(\nu) = \F(\K[\pi \nu]) + \int_{{\tilde \Omega_L} } c^2\, d\nu(c,u)\,.
\end{equation*}
Using \eqref{eq:curly_K}, \eqref{eq:Choquet},  and \ \eqref{eq:pi_nu}, we can also check that 
\begin{equation*}
\F(\K[\pi \nu]) =\F(K\I[\pi\nu])= \F\left(\int_{\tilde \Omega_L} c^2 Ku \, d \nu(c,u) \right)\,,
\end{equation*}
so that in total
\begin{align}\label{eq:I_def}
\mathcal{J}(\nu) =  \F\left(\int_{ \tilde \Omega_L} c^2 Ku \,d \nu(c,u) \right) + \int_{\tilde \Omega_L} c^2\,d\nu(c,u)\,.
\end{align}
\begin{rem}
The definitions above can be made analogously with $\B$ instead of $\tilde{\B}$, but as mentioned before, the equivalence of the problems that we show in the following only holds with $\tilde{\B}$ in general.
\end{rem}
Next, we show that problems \eqref{eq:P_mu} and \eqref{eq: P_Wasserstein} are equivalent. We first need to prove that the projection operator $\pi$ is surjective.

\begin{lem}\label{lem:surjectivity_of_pi} For every $\alpha>0$ and $L>\sqrt{\alpha}$, we have that the homogeneous projection operator  $\pi: \mathcal{P}_2(\tilde \Omega_L)\to M_\alpha^+(\tilde\B)$ is surjective.
\end{lem}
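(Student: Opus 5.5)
Given $\mu\in M^+_\alpha(\tilde\B)$, I will write down an explicit preimage $\nu\in\mathcal{P}_2(\tilde\Omega_L)$ with $\pi\nu=\mu$. The factorization idea is to put the whole mass of $\mu$ at one fixed weight level and dump the leftover probability mass at weight zero, where the homogeneous projection cannot see it. First I check that $\pi$ indeed maps into $M^+_\alpha(\tilde\B)$ when $L^2\ge \alpha$. Taking $\psi\equiv 1$ in \eqref{eq:pi_nu} gives $\|\pi\nu\|_{TV}=\int c^2\,d\nu\le L^2$. So the codomain should be read as containing $M^+_\alpha(\tilde\B)$, and surjectivity onto $M^+_\alpha$ is the real content. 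Positivity of $\pi\nu$ is immediate from $c^2\ge0$.

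For the construction, let $m:=\|\mu\|_{TV}\le\alpha$. If $m=0$, I take $\nu:=\delta_{(0,\bar u)}$ for any fixed $\bar u\in\tilde\B$, which is possible since $\tilde\B\ne\emptyset$ by Remark \ref{rem:metrizability_of_B}; then $\pi\nu=0$. If $m>0$, I fix $\bar c:=\sqrt{\alpha}\in(0,L)$ and set
\begin{equation*}
\nu:=\frac{1}{\alpha}\,\delta_{\bar c}\otimes\mu+\Big(1-\frac{m}{\alpha}\Big)\,\delta_{(0,\bar u)}.
\end{equation*}
This is a positive Borel measure on $\tilde\Omega_L=[0,L]\times\tilde\B$ with total mass $m/\alpha+1-m/\alpha=1$. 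Because $\tilde\Omega_L$ is compact, $d_\Omega$ is bounded, so $\nu\in\mathcal P_2(\tilde\Omega_L)$ by \eqref{eq:def_P_2_Wasserstein}.

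It remains to verify $\pi\nu=\mu$ by testing against $\psi\in C(\tilde\B)$:
\begin{equation*}
\int_{\tilde\Omega_L}c^2\psi(u)\,d\nu=\frac{\bar c^2}{\alpha}\int_{\tilde\B}\psi\,d\mu+\Big(1-\frac m\alpha\Big)\,0^2\,\psi(\bar u)=\int_{\tilde\B}\psi\,d\mu.
\end{equation*}
Since $\tilde\B$ is a compact metric space, Radon measures on it are determined by their action on continuous functions (Riesz), so this identity gives $\pi\nu=\mu$.

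I do not expect a real obstacle here. The only points needing care are the admissibility of the weight level $\bar c=\sqrt\alpha\le L$, which is exactly where $L>\sqrt\alpha$ enters, and the use of the zero-weight atom to restore unit mass. The zero-weight atom is allowed because $0\in[0,L]$. It is also harmless because it is invisible to $\pi$, even though $0$ may not lie in $\tilde\B$; only $(0,\bar u)$ with $\bar u\in\tilde\B$ is used. Measurability of the product $\delta_{\bar c}\otimes\mu$ is standard.
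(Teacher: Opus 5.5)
Your proof is correct and follows the same strategy as the paper. Both build an explicit preimage by tensoring a Dirac mass in the weight variable with $\mu$ and then check $\pi\nu=\mu$ against $\psi\in C(\tilde\B)$. The paper places the normalized measure $\mu/\mu(\tilde\B)$ at the level $c=\sqrt{\mu(\tilde\B)}$, so it needs no padding atom. You instead use the fixed level $\sqrt\alpha$ together with a zero-weight atom, which makes your preimage affine in $\mu$ and covers the case $m=0$ automatically. Your remark that $\pi$ actually takes values in $M^+_{L^2}(\tilde\B)$ is also accurate, so the statement is best read as $M^+_\alpha(\tilde\B)\subset\pi\bigl(\mathcal{P}_2(\tilde\Omega_L)\bigr)$.
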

\begin{proof}
Consider a measure $\mu \in M_\alpha^+(\tilde\B )$. Note that if $\mu = 0$, then $\pi \nu_0 = \mu$ for every $\nu_0$ of the form $\nu_0 = \delta_0\otimes \delta_{u}$, and for any $u \in  \tilde\B $. Therefore, in what follows we will suppose that $\mu( \tilde\B ) > 0$.
Define now 
\begin{equation}\label{eq:measure_nu_mu}
\nu_{\mu}:= \delta_{\sqrt{\mu( \tilde\B )}} \otimes \frac{\mu}{\mu(\tilde\B)} \in M^+_\alpha(\tilde \Omega_L)
\,, 
\end{equation}
since $L > \sqrt{\alpha}$. Note that for every test function $\varphi \in C(\tilde \Omega_L)$,  we have
\begin{align*}
\nu_\mu(\varphi) := \int_{ \tilde\Omega_L} \varphi(c,u) \, d \delta_{\sqrt{\mu( \tilde\B )}}(c) \, d \Big(\frac{\mu}{\mu( \tilde\B )}\Big)(u) = \int_{\tilde \B} \varphi(\sqrt{\mu( \tilde\B )},u)\, d \big(\frac{\mu}{\mu( \tilde\B )}\big)(u)\,.
\end{align*}
Testing the above definition with the constant function $\varphi \equiv 1$, it is clear that the measure $\nu_\mu$ as defined in \eqref{eq:measure_nu_mu} is a probability measure. 
Finally, let us verify that $\pi \nu_\mu = \mu$. For every $\psi \in C( \tilde\B)$ it holds that 
\begin{align*}
\pi\nu_\mu(\psi)&=\int_{ \tilde\B }\psi(u)\,d[\pi\nu_\mu](u) =\int_{ \tilde \Omega_L}c^2\psi(u)\,d\nu_\mu(c,u)\\
&= \int_{[0,L]\times \tilde\B }c^2\psi(u)\,d\delta_{\sqrt{\mu( \tilde\B )}}(c)\,d\big(\frac{\mu}{\mu( \tilde \B)}\big)(u)=\int_{ \tilde\B} \psi(u)\, d \mu(u) = \mu(\psi)\,,
\end{align*}
showing that $\pi\nu_\mu=\mu$.
\end{proof}

\begin{prop}
 For every $\alpha>0$ and $L>\sqrt{\alpha}$,  it holds that 
\begin{align}\label{eq:lifted_and_Wasserstein_equiv}
\min_{\nu \in \mathcal{P}_2( \tilde \Omega_L)} \mathcal{J} (\nu) = \min_{\mu \in M_\alpha^+( \tilde\B )} j(\mu)\,.
 \end{align}
Moreover, if $\nu \in \mathcal{P}_2( \tilde \Omega_L)$ minimizes $ \mathcal{J} $, then $\pi \nu \in M^+_{ \alpha}( \tilde \B)$ minimizes $j$. Conversely, if $\mu  \in M^+_{ \alpha }( \tilde\B )$ minimizes $j$, then every $\nu \in \mathcal{P}_2( \tilde \Omega_L)$ such that $\pi \nu  = \mu$ minimizes $\mathcal{J}$.
\end{prop}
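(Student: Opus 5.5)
The plan is to compare both sides of \eqref{eq:lifted_and_Wasserstein_equiv} through the identity $\mathcal{J}(\nu)=j(\pi\nu)$, using two earlier results: the surjectivity of $\pi$ from \Cref{lem:surjectivity_of_pi}, and the fact from \Cref{prop:equivalence_of_problems} that restricting $j$ to $M^+_\alpha(\tilde\B)$ does not change its minimum. I take $\alpha$ to be (at least) the constant $\alpha(J)$ of \Cref{prop:equivalence_of_problems}, with $L>\sqrt{\alpha}$. For smaller $\alpha$ the restricted minimum on the right can be strictly larger, so some such restriction seems implicit in the statement.

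First, the upper bound $\min \mathcal{J}\le \min_{M^+_\alpha} j$. \Cref{prop:equivalence_of_problems} gives a minimizer $\mu_*\in M^+_\alpha(\tilde\B)$ of $j$. \Cref{lem:surjectivity_of_pi} gives some $\nu_*\in\mathcal{P}_2(\tilde\Omega_L)$ with $\pi\nu_*=\mu_*$; explicitly, $\nu_*=\nu_{\mu_*}$ from \eqref{eq:measure_nu_mu}. Then $\mathcal{J}(\nu_*)=j(\mu_*)$.

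Second, the lower bound. For an arbitrary $\nu\in\mathcal{P}_2(\tilde\Omega_L)$, the measure $\pi\nu$ is positive, with total mass $\int c^2\,d\nu\le L^2<\infty$. Hence $\pi\nu\in M^+(\tilde\B)$, and by \eqref{eq:equiv_of_both_problems_nonbounded}–\eqref{eq:equiv_of_both_problems}
\[
\mathcal{J}(\nu)=j(\pi\nu)\ge \inf_{M^+(\tilde\B)} j=\min_{M^+_\alpha(\tilde\B)} j .
\]
Together with the first step, this shows that $\nu_*$ attains the infimum of $\mathcal{J}$, so the left-hand side is a genuine minimum and \eqref{eq:lifted_and_Wasserstein_equiv} holds.

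For the characterization of minimizers, the converse direction is immediate. If $\mu\in M^+_\alpha(\tilde\B)$ minimizes $j$ and $\pi\nu=\mu$, then $\mathcal{J}(\nu)=j(\mu)$ equals the common minimum, so $\nu$ minimizes $\mathcal{J}$. For the forward direction, if $\nu$ minimizes $\mathcal{J}$, then $j(\pi\nu)=\min j$, so $\pi\nu$ minimizes $j$ over all of $M^+(\tilde\B)$.

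The only delicate point is that $\pi\nu$ actually lies in $M^+_\alpha(\tilde\B)$; a priori we only know $\|\pi\nu\|_{TV}\le L^2$, which may exceed $\alpha$. I would handle this with a coercivity estimate, as in the proof of \Cref{thm: uniqueness}. Since $\F\ge -C(\F)$,
\[
\|\pi\nu\|_{TV}=j(\pi\nu)-\F(\K\pi\nu)\le \min_{\M} J + C(\F).
\]
So it suffices to choose $\alpha\ge \max\{\alpha(J),\,\min_\M J+C(\F)\}$, which is compatible with \Cref{prop:equivalence_of_problems}, since enlarging $\alpha$ keeps \eqref{eq:equiv_of_both_problems} valid. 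This choice of $\alpha$, which makes every minimizer's projection satisfy the mass constraint, is the step I expect to require the most care.
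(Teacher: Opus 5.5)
Your argument is correct, and it is more careful than the paper's.

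\emph{The paper's route.} The paper invokes the direct method for existence in both problems. It then takes a minimizer $\nu$ of $\mathcal{J}$ and compares it only with the lifts $\nu_\mu$ of measures $\mu\in M^+_\alpha(\tilde\B)$. From this it concludes that $\pi\nu$ minimizes $j$, simply asserting $\pi\nu\in M^+_\alpha(\tilde\B)$, as if \Cref{lem:surjectivity_of_pi} said that $\pi$ maps into $M^+_\alpha(\tilde\B)$. The converse is declared analogous.

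\emph{What your route fixes.} In fact $\pi$ maps $\mathcal{P}_2(\tilde\Omega_L)$ onto $M^+_{L^2}(\tilde\B)\supsetneq M^+_\alpha(\tilde\B)$. So that membership, and the converse (which needs $j(\pi\nu')\ge\min_{M^+_\alpha(\tilde\B)}j$ for arbitrary $\nu'$), both require the two ingredients you supply. These are the comparison with the unrestricted problem through \eqref{eq:equiv_of_both_problems_nonbounded}--\eqref{eq:equiv_of_both_problems}, and the coercivity bound $\|\pi\nu\|_{TV}\le\min_\M J+C(\F)$. Your sandwich argument also gives a minimizer of $\mathcal{J}$ directly, namely $\nu_{\mu_*}$, so no lower semicontinuity on $\mathcal{P}_2(\tilde\Omega_L)$ is needed.

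\emph{The condition on $\alpha$.} The price is a largeness condition on $\alpha$, but this is unavoidable rather than an artifact of your method. For small $\alpha$ the equality itself fails. Take $K\mu=\mu(X)$, $\F(y)=\frac12(y-2)^2$, $\RR=\|\cdot\|_{TV}$, $\alpha=\frac12$, $L=2$; then $\min_{\mathcal{P}_2(\tilde\Omega_L)}\mathcal{J}=\frac32<\frac{13}{8}=\min_{M^+_{1/2}(\tilde\B)}j$. Even with $\alpha=\alpha(J)$, the inclusion $\pi\nu\in M^+_\alpha(\tilde\B)$ can fail when $J$ has minimizers with different values of $\RR$. So your requirement $\alpha\ge\min_\M J+C(\F)$ is doing real work.
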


\begin{proof}
 The existence of minimizers in both problems follows by the structural assumptions \ref{assmpt:a1}-\ref{assmpt:a6} and a standard application of the direct method in the Calculus of Variations. The equivalence of the two minimization problems follows immediately from the surjectivity of $\pi$ and the definition of $\mathcal{J}$ in \eqref{eq: P_Wasserstein}. Indeed, let $\nu\in \mathcal{P}_2(\tilde\Omega_L)$ be a minimizer of $ \mathcal{J}$.
 Then, for every $\mu\in M^+_{\alpha}(\tilde\B)$, using Lemma \ref{lem:surjectivity_of_pi}, there exists $\nu_\mu\in \mathcal{P}_2(\tilde \Omega_L)$ such that $\pi \nu_\mu=\mu$, so that by \eqref{eq: P_Wasserstein}, we have
 \[j(\mu)=j(\pi\nu_\mu)= \mathcal{J} (\nu_\mu)\geqslant \mathcal{J}  (\nu)=j(\pi \nu)\,,\]
 \textit{i.e.}, $\pi\nu\in M^+_{\alpha}(\tilde\B)$ is a minimizer for $j$.  The converse follows analogously, showing \eqref{eq:lifted_and_Wasserstein_equiv}. \qedhere
 \end{proof}

\subsection{Minimizing movements for the lifted problem}\label{sec:minmovlifted}
Following the previous discussion and the approach in Subsection \ref{subsec:discretization_MM}, we fix again a closed (so compact), geodesically complete subset $\B\subset \tilde \B$, and recalling \eqref{eq:Omega_n_def}-\eqref{eq:d_omega_def}, we consider
\begin{equation*}
    \Omega_L = [0,L]\times\B\,,
\end{equation*}
and the lifted functional $\mathcal{J}$ on $\mathcal{P}_2(\Omega_L)$, which itself a compact and geodesically connected subset of $\mathcal{P}_2(\tilde \Omega_L)$. We recall that, having restricted to $\Omega_L$, the functional  $\mathcal{J}$ is defined as
\begin{align}\label{eq:repr_I}
\mathcal{J}(\nu) :=   \mathcal{F}\left(\int_{\Omega_L} c^2 Ku \,d \nu(c,u) \right) + \int_{\Omega_L} c^2\, d\nu(c,u)\,.
\end{align}
Again, we use the JKO scheme to show existence of mimizing movements. We note that similarly as in Subsection \ref{subsec:discretization_MM}, we can only hope to recover an approximate minimum of $J$ with $\mathcal{J}$ %on $\Omega_L$ 
in $\mathrm{conv}\,\B$, but here one needs to restrict the support of our measures to $\B$ to obtain existence of, \textit{e.g.},  curves of maximal slope.
\begin{defin}[\textit{Approximation scheme}]
\label{def:discrete_scheme_mms}
Let $\tau>0$ and $\nu^0\in \mathcal{P}_2(\Omega_L)$ be a given initialization. Set $\nu_\tau^0:=\nu^0$, and choose (iteratively) for all $k\in\mathbb{N}_0$,
\begin{align}\label{eq:dsc2}
\nu^{k+1}_\tau \in \underset{\nu \in \mathcal{P}_2(\Omega_L)}{\rm argmin} \left\{\mathcal{J}(\nu) + \frac{1}{2\tau}\W_2^2(\nu,  \nu^k_\tau)\right\}.
\end{align}
Define then the piecewise constant curve $\nu_\tau : [0,+\infty) \rightarrow \mathcal{P}_2(\Omega_L)$ by $\nu^0_\tau = \nu^0$ and for every $k \in \mathbb{N}_0$,
\begin{align}\label{eq:nu_tau_step}
\nu_{\tau,t} := \nu^k_\tau\ \ \text{ for } t \in ((k-1)\tau, k\tau]\,. 
\end{align}
\end{defin}
\noindent
The next theorem shows the existence of minimizing movements for the lifted problem.
\begin{thm}[\textit{Minimizing movements as limit path for $\tau \rightarrow 0$}]
\label{thm:existence_of_gen_MMs}
With $\nu_{\tau,t}$ as in \eqref{eq:nu_tau_step}, there exists $(\tau_\ell)_{\ell\in \N}\subset (0,1)$ with $\tau_\ell\searrow 0$, and a curve $\nu_t \in  AC^2_{loc}([0,+\infty); \mathcal{P}_2(\Omega_L))$  such that for every $t \in [0,+\infty)$ it holds that 
\begin{align*}
\lim_{\ell \to \infty} \mathcal{W}_2(\nu_{\tau_\ell,t},\nu_t)\rightarrow 0 \quad  \text{as } \ell\to\infty\,.
\end{align*}
In particular, we have that $\nu_t$ is a minimizing movement in the sense of \Cref{def:minimizing_movements}.
\end{thm}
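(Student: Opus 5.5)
The plan is to apply the abstract existence result for generalized minimizing movements, \cite[Proposition 2.2.3]{AGS05}, in the metric space $(\mathcal{P}_2(\Omega_L),\W_2)$, exactly as for \Cref{thm:existence_of_GMM}. That proposition requires three ingredients: a complete metric space; a functional $\mathcal{J}$ that is lower semicontinuous with respect to (a topology weaker than) the metric and bounded from below; and relative compactness of sublevel sets of the discrete energies (or of the whole space). Once these are checked, the proposition gives a sequence $\tau_\ell\searrow 0$ and a limit curve in $AC^2_{loc}([0,+\infty);\mathcal{P}_2(\Omega_L))$ with pointwise $\W_2$-convergence.

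\textbf{Step 1 (the space).} Since $\B$ is closed in the compact space $(\tilde\B,d_{\tilde\B})$, $\Omega_L=[0,L]\times\B$ with $d_\Omega$ is a compact metric space. Hence $\mathcal{P}_2(\Omega_L)=\mathcal{P}(\Omega_L)$, and $\W_2$ metrizes narrow convergence, making $(\mathcal{P}_2(\Omega_L),\W_2)$ a compact, hence complete, metric space (\cite[Section 7.1]{AGS05}). This disposes of all compactness requirements at once.

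\textbf{Step 2 (well-posedness of the scheme).} Let $\nu_k\to\nu$ in $\W_2$, i.e.\ narrowly. The map $(c,u)\mapsto c^2$ is continuous and bounded on $\Omega_L$, so $\int c^2\,d\nu_k\to\int c^2\,d\nu$. For the fidelity term, set $y_k:=\int_{\Omega_L}c^2Ku\,d\nu_k\in Y$. By \ref{assmpt:a5}, $u\mapsto Ku$ is continuous from $(\B,d_\B)$ into $(Y,\|\cdot\|_Y)$; since $\B$ is compact, it is uniformly continuous and bounded. Then $(c,u)\mapsto c^2Ku$ is a bounded continuous $Y$-valued map, so for every $y\in Y$ the function $(c,u)\mapsto c^2(Ku,y)_Y$ is bounded and continuous, giving $(y_k,y)_Y\to(\int c^2Ku\,d\nu,y)_Y$. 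Moreover, the image $\{c^2Ku\}$ is a compact subset of $Y$; approximating this map uniformly by finitely-valued maps, via partitions of unity, upgrades the convergence to strong convergence in $Y$.

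As $\F$ is continuous on $Y$ by \ref{assmpt:a2}, $\mathcal{J}$ is in fact continuous on $\mathcal{P}_2(\Omega_L)$. It is bounded below by $\inf\F$. Therefore each minimization \eqref{eq:dsc2} has a solution, since the penalized functional is continuous on a compact space.

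\textbf{Step 3 (conclusion).} Apply \cite[Proposition 2.2.3]{AGS05}, with the topology $\sigma$ taken to be the $\W_2$-topology itself. The standard a priori estimate
\[\sum_k\frac{1}{2\tau}\W_2^2(\nu^{k+1}_\tau,\nu^k_\tau)\le\mathcal{J}(\nu^0)-\inf\mathcal{J}\]
yields equi-$\tfrac12$-Hölder bounds on the interpolants. A refined Ascoli–Arzelà argument in the compact space then gives the subsequence $\tau_\ell$ and the limit $\nu_t\in AC^2_{loc}$ with $\W_2(\nu_{\tau_\ell,t},\nu_t)\to0$ for every $t$. The only step needing genuine care is Step 2: the strong measurability of the Bochner-type integral of $c^2Ku$ and its strong convergence, which rests on \ref{assmpt:a5} and the compactness of $\B$. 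Even weak convergence would suffice here, by the weak lower semicontinuity of the convex, continuous $\F$.
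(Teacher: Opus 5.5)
Your proposal is correct and follows the same route as the paper: check that $\mathcal{J}$ is lower semicontinuous with respect to narrow convergence, which $\W_2$ metrizes on the compact space $\mathcal{P}_2(\Omega_L)$, and then invoke \cite[Proposition 2.2.3]{AGS05}. The only difference is in that lower-semicontinuity check. The paper obtains weak convergence of $\int c^2Ku\,d\nu^m$ in $Y$ via uniform integrability and \cite[Lemma 5.1.7]{AGS05}, then uses weak lower semicontinuity of $\F$; you instead use \ref{assmpt:a5} and compactness of $\Omega_L$ to make $(c,u)\mapsto c^2Ku$ a bounded continuous $Y$-valued map, which is cleaner and even gives continuity of $\mathcal{J}$.
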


\begin{proof}
%We first prove that the sublevels sets of $\BBB \mathcal{J}\EEE$ are relatively compact with respect to the narrow convergence of measures. To this end, consider a sequence $(\nu^m)_{m\in \N} \BBB \subset \EEE \mathcal{P}_2(\Omega_L)$ such that 
%\[\sup_{m\in \N} \BBB \mathcal{J}\EEE(\nu^m) \leq C<+\infty\,,\] 
%for some constant $C>0$. By Prokhorov's theorem, cf. \cite[Theorem 5.1.3]{AGS05}, we only need to prove that the sequence $(\nu^m)_{m\in \N}$ is tight. This is true thanks to the integral criterion for tightness given in \cite[Remark 5.1.5]{AGS05} using the function $\varphi(\theta,u) := \theta^2$, whose sublevel sets are compact on $\Omega_L$. Therefore, $(\nu^m)_{m\in \N}$ is relatively compact in the narrow topology. 

 We first check that $\mathcal{J}:\mathcal{P}_2(\Omega_L) \rightarrow \R$ is lower semicontinuous with respect to the narrow convergence. Indeed, if $(\nu^m)_{m\in \N} \subset \mathcal{P}_2(\Omega_L)$ is narrowly converging to $\nu \in \mathcal{P}_2(\Omega_L)$, and we assume % We aim to prove that 
%\begin{align}\label{eq:I_narrow_lsc}
%\liminf_{m\to \infty} \BBB \mathcal{J}\EEE(\nu^m) \geqslant \BBB \mathcal{J}\EEE(\nu)\,.
%\end{align}
without restriction that \begin{align}\label{eq:M}
\liminf_{m\to \infty}  \mathcal{J}(\nu^m) = \lim_{m\to \infty}  \mathcal{J}(\nu^m) =: M <+\infty\,,
\end{align}
then
$$\liminf_{m\to \infty} \int_{\Omega_L} c^2 d\nu^m(c,u) \geqslant \int_{\Omega_L} c^2 d\nu(c,u)\,,$$
thanks to \cite[Formula 5.1.15]{AGS05}. Since $\mathcal{F}$ is continuous and convex cf. \ref{assmpt:a2}, it is also weakly lower semicontinuous. Therefore, it is enough to prove that 
\[\lim_{m\to \infty} \int_{\Omega_L} c^2 Ku \,d \nu^m(c,u) = \int_{\Omega_L} c^2 Ku \,d \nu(c,u)\]
with respect to the weak convergence in $Y$. To this end,  given $y \in Y$ it holds that 
\begin{equation}\label{eq:weak}
\Big(\int_{\Omega_L} c^2 Ku \,d \nu^m(c,u),y\Big)_Y = \int_{\Omega_L} c^2 (Ku,y)_Y \,d \nu^m(c,u)\,.
\end{equation}
Note that $c^2 |(Ku,y)_Y|$ is uniformly integrable with respect to $\nu^m$. Indeed,
\begin{align*}
\int_{\Omega_L} c^2 |(Ku,y)_Y|\, d\nu^m \leqslant  \big(\|K\|\|y\|_Y \sup_{u\in \mathcal{B}}\|u\|_{\M}\big)  \mathcal{J}(\nu^m)\,,
\end{align*}
 and the right hand side of the last inequality  is uniformly bounded in $m$ thanks to \eqref{eq:M}. Therefore, applying \cite[Lemma 5.1.7]{AGS05} to \eqref{eq:weak} we conclude the lower semicontinuity of $ \mathcal{J}$ with respect to the narrow convergence. Finally, since $\mathcal{W}_2$ metrizes the narrow convergence in $\mathcal{P}_2(\Omega_L)$, we can apply 
\cite[Proposition 2.2.3]{AGS05} to conclude the proof.
\end{proof}

\subsection{$\lambda$-Convexity for the lifted problem}
Analogously to Subsection \ref{subsec:lambda_cvx_nonlifted}, we turn to the study of convexity properties of the functional $\mathcal{J}$ along geodesics in the Wasserstein space $\mathcal{P}_2(\Omega_L)$. In particular, we address the question whether for a geodesic $\gamma_t\in \mathcal{P}_2(\Omega_L)$, the function 
\begin{equation*}
 t\mapsto\mathcal{J}(\gamma_t) := \F\left(\int_{\Omega_L}c^2Ku\,d\gamma_t( c,u)\right) + \int_{\Omega_L}c^2\,d\gamma_t( c,u)
\end{equation*}
is $\lambda$-convex, \textit{i.e.}, fulfills \eqref{eq:lambda_convexity}. In the following, we denote by $C([0,1]; \Omega_L)$ the separable and complete metric space of continuous curves in $\Omega_L$, endowed with the metric of uniform convergence induced by $d_{\Omega}$, cf. \eqref{eq:d_omega_def}. Define then the \textit{evaluation map} $e_t:C([0,1]; \Omega_L) \rightarrow \Omega_L$ as 
\[e_t(v):=v(t) \ \text{ for every }t\in [0,1]\,.\]
Note also that by \cite[Theorem 10.6]{ambrosio2021lectures}, geodesics in $\mathcal{P}_2(\Omega_L)$ exist, as $\Omega_L$ is a geodesic space. To prove the $\lambda$-convexity, we first recall the \emph{metric dynamical formulation of optimal transport}, which is a major ingredient in the proof.
For every $\mu_0, \mu_1 \in\mathcal{P}_2(\Omega_L)$,  consider the minimization problem

\begin{equation}\label{eq:dyn_minimiz}
\min
\Big\{\int_{C([0,1];\Omega_L)}
\int_0^1 |\gamma'_t|^2\,dt\,d\eta(\gamma)\colon \eta\in \mathcal{P}(C([0,1];\Omega_L))\,, (e_0,e_1)_\#\eta = (\mu_0,\mu_1)\Big\}\,.
\end{equation}
We call any such admissible $\eta$ a \textit{dynamic transport plan}, the collection of which is denoted by $\mathrm{DTP}(\mu_0,\mu_1)$,  and any optimal one will be called an  \textit{optimal dynamic geodesic plan}. We denote by $\mathrm{OptGeo}(\mu_0,\mu_1)$ the collection of all optimal geodesic plans from $\mu_0$ to $\mu_1$, which is be supported on $\mathrm{Geo}(\Omega_L)$ (see \cite[Theorem 9.13]{ambrosio2021lectures}). 
Using \cite[Theorem 9.13]{ambrosio2021lectures} and Lemma \ref{lem:coord_geodesics}, it holds that 
\begin{align}\label{eq:dyn_ot}
\W_2^2(\mu_0, \mu_1)=\min_{\eta \in \mathrm{DTP}(\mu_0,\mu_1)}\int_{\mathrm{Geo}(\Omega_L)} \int_0^1\left|( c_t,u_t)^{\prime}\right|^2\, dt\, d \eta(c, u)\,. 
\end{align}
In addition, \cite[Theorem 10.6]{ambrosio2021lectures} guarantees that for any geodesic $\gamma_t \in \mathcal{P}_2(\Omega_L)$, there exists an $\eta \in \mathcal{P}(C([0,1]; \Omega_L)$ with $\eta \in \mathrm{OptGeo(\gamma_0,\gamma_1)}$ and $\mathrm{spt}\,\eta \subset \mathrm{Geo}(\Omega_L)$ such that
\begin{equation}\label{eq:gamma_t}
 \gamma_t = (e_t)_\#\eta\,.
\end{equation}
Thus, fixing any geodesic $\gamma_t\in \mathcal{P}_2(\Omega_L)$, we can write $\mathcal{J}(\gamma_t)$  as 
\begin{align}\label{eq:I_gamma_t_reform}
\begin{split}
\mathcal{J}(\gamma_t) &= \F\left(\int_{\Omega_L}c^2Ku\, d[(e_t)_\#\eta](c,u)\right) + \int_{\Omega_L}c^2\,d[(e_t)_\#\eta](c,u)\\
&= \F\left(\int_{\mathrm{Geo}(\Omega_L)}c^2_tKu_t\, d\eta(c,u)\right) + \int_{\mathrm{Geo}(\Omega_L)}c_t^2\,d\eta(c,u)\,.
\end{split}
\end{align}
\noindent

\begin{prop}\label{prop:semiconvexity_of_lifted}
Assume \ref{assmpt:a1}-\ref{assmpt:a8}. Then for every $L >0$, there exists a $\lambda(L) \leq 0$ such that $\mathcal{J}$ is $\lambda$-convex along geodesics on $\mathcal{P}_2(\Omega_L)$.
\begin{proof} 
Note that for every geodesic $( c_t,u_t)\in \mathrm{Geo}(\Omega_L)$ by \Cref{lem:coord_geodesics} we have that $c_t \in \mathrm{Geo}(\R_+)$ and $u_t\in \mathrm{Geo}(\B)$. Thus $c_t = (1-t)c_0 + t c_1$.
Therefore,
\begin{equation}\label{eq:d_dt_c_t_est}
\frac{d}{dt} c_t^2 =\ 2t(c_1-c_0)^2 + 2(c_1- c_0) c_0 \leq 2\big(|c_1-c_0|^2 + |c_1-c_0|c_0\Big), \ \text{ and } 
\frac{d^2}{dt^2}c_t^2 =\ 2(c_1- c_0)^2\,.
\end{equation}
Let now $\gamma_t \in \mathcal{P}_2(\Omega_L)$  be a geodesic.
Thanks to \cite[Theorem 10.6]{ambrosio2021lectures} there exists $\eta \in \mathcal{P}(\mathrm{Geo}(\Omega_L))$ with $\gamma_t = (e_t)_\# \eta$ and $\eta \in \mathrm{OptGeo}(\gamma_0,\gamma_1)$.
Therefore, by the Dominated Convergence Theorem,
\begin{align*}
\frac{d}{dt} \int_{\Omega_L} c^2 d\gamma_t =&\ \lim_{h\rightarrow 0} \left(\int_{\Omega_L} c^2 d\gamma_{t+h} - \int_{\Omega_L} c^2 d\gamma_{t}\right) / h = \lim_{h\rightarrow 0} \left(\int_{\Omega_L} c^2 d(e_{ t+h})_\#\eta - \int_{\Omega_L} c^2 d(e_{t})_\#\eta\right) / h \\
=&\ \lim_{h\rightarrow 0} \int_{\mathrm{Geo}(\Omega_L)}\Big(\frac{c^2_{t+h}-c^2_{t}}{h}\Big)\,
d\eta 
= \int_{\mathrm{Geo}(\Omega_L)} \frac{d}{dt} (c^2_t)\, d\eta\,,
\end{align*}
and similarly $\frac{d^2}{dt^2} \int_{\Omega_L} c^2 d\gamma_t = \int_{\mathrm{Geo}(\Omega_L)} \frac{d^2}{dt^2} (c^2_t)\, d\eta$.
Recalling \eqref{eq:metric_derivative} for geodesics, \eqref{eq:dyn_ot} and \eqref{eq:d_dt_c_t_est}, we can further estimate
\begin{align}\label{eq:second_der_regularizer_lifted}
\begin{split}
\frac{d^2}{dt^2} \int_{\Omega_L} c^2 d\gamma_t = &\ \int_{\mathrm{Geo}(\Omega_L)} 2( c_1 - c_0)^2 \,d\eta \leq\ 2 \int_{\mathrm{Geo}(\Omega_L)}\big( |c_1 -  c_0|^2 + d_\B^2(u_1,u_0)\big)\, d\eta \\
=&\ 2\int_{\mathrm{Geo}(\Omega_L)} \int_0^1|( c_t,u_t)'|^2\, dt\, d\eta = 2 \W_2^2(\gamma_0,\gamma_1)\,.
\end{split}
\end{align}
Moreover, by a similar application of the Dominated Convergence Theorem, we have
\begin{equation}\label{eq:d_dt_geo_1}
\frac{d}{dt} \int_{\Omega_L} c^2 Ku\, d\gamma_t =   \int_{\mathrm{Geo}(\Omega_L)} \Big(\frac{d}{dt}( c_t^2) Ku_t +  c_t^2 \frac{d}{dt} Ku_t\,,
\Big)\,d\eta\,        
\end{equation}
and 
\begin{equation}\label{eq:d_dt_geo_2}
\frac{d^2}{dt^2} \int_{\Omega_L}  c^2 Ku\, d\gamma_t = \int_{\mathrm{Geo}(\Omega_L)} \Big(\frac{d^2}{dt^2}( c_t^2)Ku_t + 2  \frac{d}{dt}( c_t^2)\frac{d}{dt} Ku_t +  c_t^2 \frac{d^2}{dt^2}Ku_t\Big) d\eta\,.\end{equation}
Arguing as in the proof of \Cref{prop:local_convexity}, using \eqref{eq:d_dt_geo_1},  Jensen's inequality for the probability measure $\eta$ on $\mathrm{Geo}(\Omega_L)$, \eqref{eq:d_dt_c_t_est}, \eqref{eq:bound_on_geodesics}, the Cauchy-Schwarz inequality, and for a constant $C:=C(K,\mathcal{B},L)>0$ that is allowed to vary from line to line, we estimate
\begin{align}\label{eq:d_dt_gamma_t_1}
\Big\| \frac{d}{dt}\int_{\Omega_L} c^2 Ku\, d\gamma_t \Big\|_Y^2 \leq&  \int_{\mathrm{Geo}(\Omega_L)} \big|\frac{d}{dt}  c_t^2\big|^2 \left\Vert Ku_t\right\Vert_Y^2+ 2 \big|\frac{d}{dt} c_t^2\big|| c_t^2| \|Ku_t\|_Y\big\|\frac{d}{dt}Ku_t\big\|_Y + | c_t^2|^2\left\Vert\frac{d}{dt}Ku_t\right\Vert_Y^2 d\eta \notag \\
\leq & C \Big(\int_{\mathrm{Geo}(\Omega_L)} |\frac{d}{dt}  c_t^2|^2 \left\Vert Ku_t\right\Vert_Y^2\,d\eta+  \int_{\mathrm{Geo}(\Omega_L)}   |c_t^2|^2\left\Vert\frac{d}{dt}Ku_t\right\Vert_Y^2\, d\eta\Big)\notag\\
\leq&\  C\Big( \int_{\mathrm{Geo}(\Omega_L)} \Big(\big(|c_1- c_0|^2 + |c_1-c_0| c_0\big)^2+(|c_0|+|c_1-c_0|)^4d^2_{\B}(u_0,u_1)\Big)\,d\eta\Big) \notag \\
\leq&\  C  \int_{\mathrm{Geo}(\Omega_L)}  \big(|c_1-c_0|^2 + d_\B^2(u_0,u_1)\big)\, d\eta = C \W_2^2(\gamma_0,\gamma_1)\,,
\end{align}
 where in the last equality we used again \eqref{eq:dyn_ot}. Analogously, using this time \eqref{eq:d_dt_geo_2} and again \eqref{eq:bound_on_geodesics}, we estimate  
\begin{align*}
\int_{\mathrm{Geo}(\Omega_L)}  c_t^2 \left\Vert \frac{d^2}{dt^2} Ku_t\right\Vert_Y d\eta \leq&\  C   \int_{\mathrm{Geo}(\Omega_L)} (| c_1- c_0|+ c_0)^2 d_\B^2(u_0,u_1)\, d\eta \\
\leq&\  C  \int_{\mathrm{Geo}(\Omega_L)} \big(|c_1- c_0|^2 + d_\B(u_0,u_1)^2\big) d\eta = C \W_2^2(\gamma_0,\gamma_1)\,,
\end{align*}
which directly leads to 
\begin{equation}\label{eq:2nd_derv_gamma_t_est}
\Big\|\frac{d^2}{dt^2} \int_{ \Omega_L}  c^2 Ku\, d\gamma_t \Big\|_Y \leq C\W_2^2(\gamma_0,\gamma_1)\,.
\end{equation}
Finally, considering the $\R$-valued map $[0,1]\ni t \mapsto \mathcal{J}(\gamma_t)$, and setting for brevity 
\begin{align*}
\underline{K}(t) := \int_{\mathrm{Geo}(\Omega_L)}  c_t^2 Ku_t\, d\eta\,,
\end{align*}
by \eqref{eq:I_gamma_t_reform}, \eqref{eq:second_der_regularizer_lifted}, \eqref{eq:d_dt_gamma_t_1}, and \eqref{eq:2nd_derv_gamma_t_est}, 
we get
 \begin{align*}
\frac{d^2}{dt^2} \mathcal{J}(\gamma_t) =&\ \frac{d^2}{dt^2} \F\left(\underline{K}(t) \right) + \frac{d^2}{dt^2} \int_{\mathrm{Geo}(\Omega_L)} c_t^2\,d\eta\,\\
\leq &\ \frac{d}{dt}\left( \nabla\F_{\underline{K}(t)}, \frac{d}{dt} \underline{K}(t) \right)_Y + 2 \W_2^2(\gamma_0,\gamma_1)\\
=&\ \nabla^2\F_{\underline{K}(t) } \left[\frac{d}{dt} \underline{K}(t),\frac{d}{dt} \underline{K}(t) \right] + \left(\nabla\F_{\underline{K}(t)}, \frac{d^2}{dt^2}\underline{K}(t) \right)_Y +2 \W_2^2(\gamma_0,\gamma_1) \\
\leq&\ ||\F||_{C^2 }  \left(\left\Vert \frac{d}{dt} \underline{K}(t)\right\Vert_Y^2  +  \left\Vert \frac{d^2}{dt^2} \underline{K}(t)\right\Vert_Y \right) + 2 \W_2^2(\gamma_0,\gamma_1) \\
%\leq&\ \BBB C\EEE||\F||_{C^2 }  \left[C(L,K,B) W_2^2(\gamma_0,\gamma_1) + C(L,K,B) W_2^2(\gamma_0,\gamma_1)\right] + C(L) W_2^2(\gamma_0,\gamma_1) \\
\leq&\  C  \W_2^2(\gamma_0,\gamma_1)\,,
\end{align*}
 where the constant $C>0$ in the last line depends also on $\mathcal{F}$.  Thus, the map $t \mapsto  \mathcal{J}(\gamma_t)$ is $\tilde \lambda \W_2^2(\gamma_0,\gamma_1)$-convex, for  some $\tilde \lambda:=\tilde \lambda(\F,K,\B,L)\leq 0$.  This exactly implies that $ \mathcal{J}$ is $ \tilde \lambda$-convex along geodesics in $\Omega_L$.
\end{proof}
\end{prop}

\begin{thm}\label{thm:suglifted}
Assume \ref{assmpt:a1}-\ref{assmpt:a8}. Then $|\partial \mathcal{J}|$ is a strong upper gradient for $\mathcal{J}$ on $\mathcal{P}_2(\Omega_L)$. In addition, every minimizing movement $\mu_t$ is a curve of maximal slope for $\mathcal{J}$ with regard to $|\partial \mathcal{J}|$.
\begin{proof}
First, \cite[Corollary 2.4.10]{AGS05} implies that for all $L>0$, $|\partial \mathcal{J}|$ is a strong upper gradient for $ \mathcal{J}$, since the latter is $\lambda$-convex by Proposition \ref{prop:semiconvexity_of_lifted} and lower-semicontinuous.
Next, let $\mu_0 \in \mathrm{Dom}(\mathcal{J})$ and $\mu_t$ a minimizing movement for $\mathcal{J}$, which exists by \Cref{thm:existence_of_gen_MMs}. Then since $\mathcal{J}$ is $\lambda$-convex on $\mathcal{P}_2(\Omega_L)$, \cite[Corollary 2.4.11]{AGS05} implies that $\mu_t$ is a curve of maximal slope for $\mathcal{J}$ with regard to $|\partial \mathcal{J}|$.
\end{proof}
\end{thm}

\begin{rem}
Note that here one cannot obtain uniqueness of the curves of maximal slope via standard methods, contrary to the setting for the non-lifted functional $J_n$. This is because in general, $\mathcal{P}_2(X)$ is not NPC, as geodesics can be non-unique. For $X$ being a Hilbert space, the uniqueness in $\mathcal{P}_2(X)$ still holds for  lower semicontinuous  functionals which are $\lambda$-convex along geodesics (see \cite[Theorem 11.1.4]{AGS05}), but follows from a variational characterization of the so-called Wasserstein subdifferential. If $X$ is just a metric space such as in our setting, this characterization and even the Wasserstein subdifferential are not available.
\end{rem}

\section{Relating the minimizing movements}
\label{sec:equivalence_gfs}
In this section we aim to relate the minimizing movements defined in Subsections  \ref{subsec:MM_AGF} and \ref{sec:minmovlifted}. 
In particular, we will show that gradient flows for $J_n$ induce, through lifting, gradient flows for the lifted functional $\mathcal{J}$. The precise statement is given below in Theorem \ref{thm:equivalence_of_curves_of_max_slope}. We remark that it is a generalization of \cite[Proposition B.1]{chizat2018global}. However, since we are here dealing with measures defined on general metric spaces, we cannot rely on the definition of Wasserstein gradient flow through the continuity equation, thus requiring a substantially different approach for the proof. Throughout this section, we will always assume \ref{assmpt:a1}-\ref{assmpt:a8}, \textit{i.e.}, the standard assumptions, the no loss of mass condition on the regularizer \eqref{eq:cond_no_loss_of_mass}, that $0\notin \B$ and the compatibility condition between $K$ and the metric in $\mathcal{B}$ (see also Remark \ref{rem:assumption8}).
For the next theorem, we again recall \eqref{eq:Omega_n_def}, \eqref{eq:discrete_version_of_energy}, and \eqref{eq:repr_I}.

%\textcolor{red}{Christian, have a look how we rephrase the assumption before}
%\chris{I've checked it and made it a bit more precise from my point of view; to me now it looks good}
%Moreover, in order to relate the two dynamics we need a further compatibility condition between $K$ and the geometry of $\mathcal{B}$. In particular, we consider the following additional Lipschitz assumption on $K$.
%\begin{enumerate}[label=\textnormal{(A\arabic*)}] 
%\setcounter{enumi}{7}
%\item \label{assmpt:a8} There exists $C>0$ such that for every $u,v \in \mathcal{B}$ it holds that 
%\begin{align}
%    \|Ku - Kv\|_Y \leq C d_{\B}(u,v)\,.
%\end{align}
%\end{enumerate}
%Note that \ref{assmpt:a8} is implied by \eqref{eq:bound_on_geodesics} with $m=1$ due the geodesically completeness of $\mathcal{B}$. 

\begin{thm}\label{thm:equivalence_of_curves_of_max_slope}
Suppose that \ref{assmpt:a1}-\ref{assmpt:a8} hold, and let $(\c_t,\u_t) \in AC([0,1],\Omega^n_L)$ be a curve of maximal slope for $J_n$ with  respect to the strong upper gradient $|\partial J_n|$. Then, 
\begin{equation}\label{eq:mu_t_def}
\mu_t := \frac{1}{n} \sum_{j=1}^n \delta_{(c_t^j,u_t^j)} \in \mathcal{P}_2(\Omega_L)
\end{equation}
is a curve of maximal slope for $ \mathcal{J}$ with regard to the strong upper gradient $|\partial \mathcal{J}|$.
\end{thm}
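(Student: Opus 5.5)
We compare the three ingredients of the maximal-slope inequality \eqref{eq:curve_max_slope}: energies, metric derivatives and slopes. The energies coincide. By \ref{assmpt:a6}–\ref{assmpt:a7}, $\mathcal{J}(\mu_t) = \F\big(\frac1n\sum_j (c_t^j)^2Ku_t^j\big)+\frac1n\sum_j (c_t^j)^2 = J_n(\c_t,\u_t)$. Hence $\varphi$ can be taken to be the same non-increasing function for both curves.

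For the metric derivative, the map $(\c,\u)\mapsto \frac1n\sum_j\delta_{(c^j,u^j)}$ is $1$-Lipschitz from $(\Omega^n_L,d_n)$ to $(\mathcal{P}_2(\Omega_L),\W_2)$, as noted in the remark after \ref{assmpt:a7}. Therefore $\mu_t\in AC([0,1];\mathcal{P}_2(\Omega_L))$ and $|\mu'|(t)\le |(\c,\u)'|(t)$ for a.e. $t$. By \Cref{thm:suglifted}, $|\partial\mathcal{J}|$ is a strong upper gradient. So once we know $|\partial \mathcal{J}|(\mu_t)\le |\partial J_n|(\c_t,\u_t)$ for a.e. $t$, we get
\[
\varphi'(t)\le -\tfrac12|(\c,\u)'|^2(t)-\tfrac12|\partial J_n|^2(\c_t,\u_t)\le -\tfrac12|\mu'|^2(t)-\tfrac12|\partial\mathcal{J}|^2(\mu_t),
\]
which is the claim. (Alternatively, the chain rule for the strong upper gradient gives $-\varphi'\le |\partial\mathcal{J}|(\mu_t)|\mu'|(t)$, and Young's inequality forces the inequality to hold with equalities.) The whole proof therefore reduces to the slope inequality $|\partial \mathcal{J}|(\mu)\le |\partial J_n|(\c,\u)$ for empirical measures $\mu=\frac1n\sum_j\delta_{(c^j,u^j)}$.

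\textbf{Case of distinct atoms.} Suppose first that the atoms $\omega^j=(c^j,u^j)$ are pairwise distinct, and set $\delta:=\min_{i\ne j}d_\Omega(\omega^i,\omega^j)>0$. Take $\nu$ with $\W_2(\nu,\mu)=\varepsilon\ll\delta$ and an optimal plan $\gamma\in\Gamma_0(\mu,\nu)$. Disintegrating gives $\nu=\frac1n\sum_j\nu^j$, where $\nu^j$ is the image of the atom $\omega^j$ and satisfies $\frac1n\sum_j\int d_\Omega^2(\omega^j,\omega)\,d\nu^j=\varepsilon^2$. This is the semi-discrete localization: mass leaving $\omega^j$ stays in a small neighbourhood of it on average. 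Using $\lambda$-convexity together with \ref{assmpt:a8} and the $C^2$ regularity of $\F$, we expand $\mathcal{J}$ to first order. Writing $p:=\nabla\F\big(\int c^2Ku\,d\mu\big)$ and $g(\omega):=c^2(Ku,p)_Y+c^2$, we get
\[
\mathcal{J}(\mu)-\mathcal{J}(\nu)=\tfrac1n\sum_j\int \big(g(\omega^j)-g(\omega)\big)\,d\nu^j(\omega)+O(\varepsilon^2).
\]
The remainder is $O(\varepsilon^2)$ because $\|\int c^2Ku\,d\nu-\int c^2Ku\,d\mu\|_Y\le C\,\W_2(\mu,\nu)$, by \eqref{eq:liplike} and the boundedness of $c$.

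The function $g$ is exactly the single-particle contribution appearing in $J_n$. Perturbing only particle $j$ gives $J_n$ a first-order variation $\frac1n(g(\omega^j)-g(\omega))$. Hence, for every $\omega$ near $\omega^j$,
\[
\big(g(\omega^j)-g(\omega)\big)^+\le \big(|\partial_j g|(\omega^j)+o(1)\big)\,d_\Omega(\omega^j,\omega),
\]
and one checks $\big(\frac1n\sum_j|\partial_jg|^2(\omega^j)\big)^{1/2}\le |\partial J_n|(\c,\u)$. I expect this identification to be the main obstacle. The cleanest route is a $\lambda$-convexity/geodesic argument: along the product geodesic moving each particle toward a point $\omega$, $J_n$ decreases at rate at most $|\partial J_n|$, after separating the contributions of the different particles by testing with single-particle perturbations of the form $\frac1n\sum_j s_j(\cdot)$. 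Cauchy–Schwarz then gives
\[
\frac{(\mathcal{J}(\mu)-\mathcal{J}(\nu))^+}{\varepsilon}\le \Big(\tfrac1n\sum_j|\partial_jg|^2(\omega^j)\Big)^{1/2}+o(1)\le |\partial J_n|(\c,\u)+o(1).
\]
Points $\omega$ far from $\omega^j$ carry $\nu^j$-mass of order $\varepsilon^2$, and they contribute $o(\varepsilon)$ because $g$ is Lipschitz on the compact set $\Omega_L$.

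\textbf{Coinciding atoms.} If some atoms coincide, group equal particles into clusters. Within a cluster the outgoing mass may split arbitrarily, so the argument above no longer applies directly. I would try to reduce to the distinct case by noting that $\mathcal{J}$ and the slope of $g$ depend only on cluster locations, and that moving clustered particles independently is allowed in $J_n$. Since $g$ is the same function for every particle in a cluster, the convexity of $s\mapsto s^2$ makes the same Cauchy–Schwarz bound go through. Alternatively, it suffices to have the slope inequality at a.e. $t$, and times with collisions could be handled this way.
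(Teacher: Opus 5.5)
Your reduction matches the paper's: the energies agree, the empirical embedding is $1$-Lipschitz so $|\mu'|\le|(\c,\u)'|$, and everything rests on $|\partial\mathcal{J}|(\mu_t)\le|\partial J_n|(\c_t,\u_t)$. For that slope inequality you take a genuinely different route.

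\textbf{How the paper does it.} It first replaces $\nu$ by the mass-corrected $\tilde\nu$ of \Cref{lem:exchange_limiting_sequence}, controlling $|\nu(B_\delta(y_t^j))-\frac1n|$ through Laguerre cells. This is why it needs distinct atoms and disjoint balls. It then forms $\bar\nu^n=\nu_1\otimes\dots\otimes\nu_n$ and shows $\mathcal{J}(\tilde\nu)=\int J_n\,d\bar\nu^n+o(\W_2)$ and $\int d_n(\x,\y_t)\,d\bar\nu^n\lesssim\W_2$. Finally it applies the slope of $J_n$ pointwise under the integral.

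\textbf{What your first step buys.} Disintegrating an optimal plan along the atoms of $\mu$ gives, directly and exactly, $\nu=\frac1n\sum_j\nu^j$ with $\frac1n\sum_j\int d_\Omega^2(\omega^j,\cdot)\,d\nu^j=\W_2^2(\mu,\nu)$. This removes the mass correction and the semi-discrete machinery. It also does not need distinct atoms: particles at the same location simply receive the same conditional measure, so your separate discussion of clusters is unnecessary. Your fallback for clusters would not be justified anyway, since particles can coincide on a set of times of positive measure (e.g. coinciding initial data under uniqueness).

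\textbf{What your second step costs.} Bounding through single-particle slopes $s_j:=|\partial g|(\omega^j)$ forces the extra comparison $(\frac1n\sum_j s_j^2)^{1/2}\le|\partial J_n|(\c,\u)$, which the paper never needs. It is true, but not by bookkeeping alone. You need, for each $j$, descent of $g$ at rate at least $s_j-\eta$ at \emph{every} small scale. Only then can particle $j$ be moved a distance $r s_j/(\frac1n\sum_i s_i^2)^{1/2}$ for a common $r\to0$.

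This is where $\lambda$-convexity of $g(c,u)=c^2\big((Ku,p)_Y+1\big)$ enters; it follows from \ref{assmpt:a8}. The global characterization $|\partial g|(\omega)=\sup_{\omega'\neq\omega}\big(\frac{g(\omega)-g(\omega')}{d_\Omega(\omega,\omega')}+\frac\lambda2 d_\Omega(\omega,\omega')\big)^+$ (cf. \cite[Section 2.4]{AGS05}) gives a geodesic from $\omega^j$ along which $g$ drops by at least $(s_j-\eta)r$ at each distance $r$. You then conclude with $J_n(\y)-J_n(\x)=G(\y)-G(\x)+o(d_n(\x,\y))$, where $G(\x):=\frac1n\sum_j g(x^j)$ and $\y:=(\c,\u)$. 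Your sketch names the right ingredients but should make this scale-matching explicit.

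\textbf{A shortcut.} You can skip the comparison by applying the paper's product trick to your pieces. For $\bar\nu:=\nu^1\otimes\dots\otimes\nu^n$ one has $\frac1n\sum_j\int(g(\omega^j)-g)\,d\nu^j=\int(G(\y)-G(\x))\,d\bar\nu(\x)$ and $\int d_n^2(\x,\y)\,d\bar\nu=\W_2^2(\mu,\nu)$. So the slope of $J_n$ applies directly under the integral, with Chebyshev handling the far region. This needs no convexity in this step and no distinctness. Combined with your disintegration, it gives a shorter argument than the paper's.
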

We first show that \Cref{thm:equivalence_of_curves_of_max_slope} is a consequence of the slope equality in \Cref{thm:gradient_equality_many_particle}, whose proof will be postponed to Subsections \ref{subsec:one_particle_case} and \ref{subsec:many_particle case}.
\begin{thm}\label{thm:gradient_equality_many_particle}
Suppose that \ref{assmpt:a1}-\ref{assmpt:a8} hold and let $n\geq 1$ and $\mu_t$ be as in \eqref{eq:mu_t_def}. Then,
\begin{equation}\label{eq:slope_equalities}
|\partial  \mathcal{J}|(\mu_t) = |\partial J_n|(\c_t,\u_t)\,,
\end{equation}
 where $L>0$ is arbitrary, but fixed. 
\end{thm}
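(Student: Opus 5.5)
The plan is to prove the two inequalities in \eqref{eq:slope_equalities} separately. Both sides will be compared with one-particle slopes of the \emph{first variation} of $\mathcal{J}$ at $\mu_t$. Fix $t$ and write $\omega^j:=(c^j_t,u^j_t)$, $\mu:=\mu_t$ and $\bar y:=\frac1n\sum_j (c^j)^2Ku^j\in Y$. Define
\begin{equation*}
g(c,u):=c^2\big((\nabla\F(\bar y),Ku)_Y+1\big)\,,\qquad (c,u)\in\Omega_L\,.
\end{equation*}
We will use three facts about $g$.
\begin{itemize}
\item By \ref{assmpt:a8} and $c\le L$, the map $(c,u)\mapsto c^2Ku$ is Lipschitz from $(\Omega_L,d_\Omega)$ to $Y$, and $g$ is Lipschitz as well.
\item By the computation in \Cref{prop:local_convexity} for $n=1$, $g$ is $\lambda$-convex along geodesics of $\Omega_L$ for some $\lambda=\lambda(L)\le0$.
\item Since $\F\in C^2$, for every $\nu\in\mathcal{P}_2(\Omega_L)$ we have the first-order expansion
\begin{equation*}
\Big|\mathcal{J}(\nu)-\mathcal{J}(\mu)-\int g\,d(\nu-\mu)\Big|\le C\Big\|\int c^2Ku\,d(\nu-\mu)\Big\|_Y^2\le C\,\W_2^2(\mu,\nu)\,.
\end{equation*}
The last step uses Kantorovich--Rubinstein duality (tested against $y\in Y$) and the Lipschitz bound, so the norm is at most $C\W_1\le C\W_2$.
\end{itemize}
The same expansion with $\nu$ empirical gives $J_n(\mathbf w)-J_n(\c_t,\u_t)=\frac1n\sum_j\big(g(w^j)-g(\omega^j)\big)+O(d_n^2)$.

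\emph{Step 1: $|\partial J_n|\le|\partial\mathcal{J}|$.} The map $\mathbf w\mapsto\frac1n\sum_j\delta_{w^j}$ is $1$-Lipschitz from $(\Omega_L^n,d_n)$ to $(\mathcal P_2(\Omega_L),\W_2)$, and $\mathcal{J}\big(\frac1n\sum_j\delta_{w^j}\big)=J_n(\mathbf w)$ by \ref{assmpt:a6}. Hence for $\mathbf w\to(\c_t,\u_t)$ the difference quotient of $J_n$ with denominator $d_n$ is dominated by the corresponding quotient of $\mathcal{J}$ with the smaller denominator $\W_2\to0$. If $\W_2=0$, then the numerator vanishes as well. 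Taking $\limsup$ gives the claim.

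\emph{Step 2: $|\partial\mathcal{J}|(\mu)\le\big(\frac1n\sum_j|\partial g|^2(\omega^j)\big)^{1/2}$.} This is where the semi-discrete structure enters. Take $\nu\to\mu$ in $\W_2$ and an optimal plan $\gamma\in\Gamma_0(\mu,\nu)$. Disintegrating $\gamma$ with respect to the atoms of $\mu$ splits $\nu=\frac1n\sum_j\nu^j$ with $\nu^j\in\mathcal P(\Omega_L)$; coinciding atoms are given equal shares. Then
\begin{equation*}
\W_2^2(\mu,\nu)=\frac1n\sum_j\int d_\Omega^2(\omega^j,\omega)\,d\nu^j(\omega)\,.
\end{equation*}
By $\lambda$-convexity of $g$ on the geodesic space $\Omega_L$, \cite[Theorem 2.4.9]{AGS05} gives
\begin{equation*}
g(\omega^j)-g(\omega)\le|\partial g|(\omega^j)\,d_\Omega(\omega^j,\omega)-\tfrac{\lambda}{2}\,d_\Omega^2(\omega^j,\omega)
\end{equation*}
for every $\omega\in\Omega_L$, with $-\lambda\ge0$. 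Integrating against $\nu^j$, summing, using Cauchy--Schwarz and the expansion above, we obtain
\begin{equation*}
\mathcal{J}(\mu)-\mathcal{J}(\nu)\le\Big(\frac1n\sum_j|\partial g|^2(\omega^j)\Big)^{1/2}\W_2(\mu,\nu)+C\,\W_2^2(\mu,\nu)\,.
\end{equation*}
Dividing by $\W_2(\mu,\nu)$ and letting $\nu\to\mu$ gives Step 2.

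\emph{Step 3: $\big(\frac1n\sum_j|\partial g|^2(\omega^j)\big)^{1/2}\le|\partial J_n|(\c_t,\u_t)$.} I expect this to be the main obstacle, because it requires building a single competitor sequence in $\Omega^n_L$ that realizes all one-particle slopes simultaneously, with the correct ratio of step sizes. I would first try the following construction. Let $s_j:=|\partial g|(\omega^j)$ and assume some $s_j>0$ (otherwise there is nothing to prove). For each $j$ with $s_j>0$, pick a sequence $\omega^j_k\to\omega^j$ with
\begin{equation*}
\big(g(\omega^j)-g(\omega^j_k)\big)/d_\Omega(\omega^j,\omega^j_k)\to s_j\,.
\end{equation*}
Using geodesic completeness and the $\lambda$-convexity of $g$, the slope is essentially attained along geodesics with any prescribed small length. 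I would therefore replace $\omega^j_k$ by the point at distance $r_k s_j$ on the geodesic from $\omega^j$ to $\omega^j_k$, with a common $r_k\to0$; convexity shows the descent rate $s_j$ is preserved up to $o(1)$. Particles with $s_j=0$ are left fixed. The resulting $\mathbf w_k$ satisfies $d_n(\mathbf w_k,(\c_t,\u_t))=r_k\big(\frac1n\sum_j s_j^2\big)^{1/2}$. By the expansion of $J_n$,
\begin{equation*}
J_n(\c_t,\u_t)-J_n(\mathbf w_k)=r_k\,\frac1n\sum_j s_j^2+o(r_k)\,.
\end{equation*}
Dividing by $d_n(\mathbf w_k,(\c_t,\u_t))$ yields Step 3. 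Combining Steps 1--3 proves \eqref{eq:slope_equalities}. The delicate points are the uniformity of the $O(d^2)$ remainders, which rely on $c\le L$ and \ref{assmpt:a8}, and the geodesic rescaling in Step 3.
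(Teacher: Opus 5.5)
Your proof is correct. Apart from Step~1, which is exactly the paper's argument for $|\partial J_n|\le|\partial\mathcal J|$, it takes a genuinely different route.

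\textbf{How the paper proves the reverse inequality.} The paper localizes. It assumes distinct atoms, restricts $\nu$ to pairwise disjoint $\delta$-balls around them, and renormalizes to $\tilde\nu$; this is the mass-correction lemma, whose error is controlled through Laguerre cells of semi-discrete optimal transport. It then replaces $\mathcal J(\tilde\nu)$ by $\int J_n\,d\overline{\nu}^n$ for the product measure $\overline{\nu}^n=\nu_1\otimes\dots\otimes\nu_n$, up to an $o(\W_2)$ error. Finally it integrates the local slope inequality of $J_n$ against $\overline{\nu}^n$, using $\int d_n(\x,\y_t)\,d\overline{\nu}^n\le\alpha_\nu\W_2(\nu,\mu_t)$ with $\alpha_\nu\to1$. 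The paper never identifies the value of the slope. For this direction it needs only the definition of $|\partial J_n|$ and first-order Lipschitz control of $(c,u)\mapsto c^2Ku$.

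\textbf{How your route differs.} You linearize $\mathcal J$ at $\mu_t$ through its first variation $g$. You split $\nu$ by disintegrating an optimal plan, so coinciding atoms cost nothing and no balls or Laguerre cells are needed. This gives the non-asymptotic bound $\mathcal J(\mu_t)-\mathcal J(\nu)\le S\,\W_2+C\,\W_2^2$, with $S:=\bigl(\frac1n\sum_j|\partial g|^2(\omega^j)\bigr)^{1/2}$. Step~3 then shows that $J_n$ attains this value.

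\textbf{What each approach buys.} Your route gives an explicit formula for the common value of the two slopes: the metric counterpart of $|\nabla J_n|^2=\frac1n\sum_j|\nabla g(\omega^j)|^2$. It also treats $n=1$ and $n>1$ uniformly and needs no distinctness assumption. The price is that you use the second-order part of \ref{assmpt:a8} to make $g$ $\lambda$-convex. This is essential for the geodesic rescaling in Step~3. In Step~2 it could be replaced by the local slope of $g$ together with $\nu^j(\{d_\Omega(\omega^j,\cdot)\ge\delta\})\le\delta^{-2}\int d_\Omega^2(\omega^j,\cdot)\,d\nu^j$.

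\textbf{Details to make explicit.}
\begin{itemize}
\item In Step~3, set $d_k:=d_\Omega(\omega^j,\omega^j_k)$ and $\theta:=r_ks_j/d_k$. Your estimate along the geodesic $\gamma$ from $\omega^j$ to $\omega^j_k$ is
$\frac{g(\omega^j)-g(\gamma_\theta)}{\theta d_k}\ge\frac{g(\omega^j)-g(\omega^j_k)}{d_k}+\frac{\lambda}{2}(1-\theta)d_k$, and it requires $\theta\le1$. So choose the common scale explicitly, e.g.\ $r_k:=\min_{j:\,s_j>0}d_\Omega(\omega^j,\omega^j_k)/s_j$.
\item The displayed equality in Step~3 is really the inequality $J_n(\c_t,\u_t)-J_n(\mathbf w_k)\ge r_k\frac1n\sum_js_j^2-o(r_k)$, which is all you use.
\item As in the paper, the quadratic remainder relies on a bound for $\nabla^2\F$ on the compact convex hull of $\{c^2Ku\colon(c,u)\in\Omega_L\}$.
\item $\bar y$ should be $\frac1n\sum_j(c^j_t)^2Ku^j_t$, i.e.\ evaluated at time $t$.
\end{itemize}
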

\begin{proof}[Proof of \Cref{thm:equivalence_of_curves_of_max_slope}] 
Note that $|\partial J_n|$ and $|\partial \mathcal{J}|$ are strong upper gradients for $J_n$ and $\mathcal{J}$ respectively by Theorem \ref{thm:suglifted} and Lemma \ref{lem:sugJn}.
Recalling Definitions \ref{def:Strong_upper_gradient} and \ref{def:curves_max_slope}, we need to show that 
\begin{equation} \label{eq:lifted_c_o_m_s}
2 \frac{d}{dt}  \mathcal{J}(\mu_t) \leq - |\mu_t'|^2- |\partial \mathcal{J}|^2(\mu_t) \quad \text{for } \mathcal{L}^1-\mathrm{a.e. }\ t\in[0,1]\,,
\end{equation}
using the fact that $(\c_t,\u_t)$ is already a curve of maximal slope for $J_n$, so fulfills
\begin{equation}\label{eq:nonlifted_c_o_m_s2}
2 \frac{d}{dt} J_n(\c_t,\u_t) \leq - |(\c_t,\u_t)'|^2- |\partial J_n|^2(\c_t,\u_t) \quad \text{for } \mathcal{L}^1-\mathrm{a.e. }\ t\in [0,1]\,.
\end{equation}
For this, note first that  by \eqref{eq:mu_t_def}, \eqref{eq:repr_I}, \ref{assmpt:a1} and \eqref{eq:discrete_version_of_energy},
\[ \mathcal{J}(\mu_t)=\mathcal{F}\left(\frac{1}{n}\sum_{j=1}^n(c_t^j)^2Ku_t^j\right)+\frac{1}{n}\sum_{j=1}^n(c_t^j)^2= J_n(\c_t,\u_t)\,,\] 
therefore it holds that $\frac{d}{dt} \mathcal{J}(\mu_t) =  \frac{d}{dt} J_n(\c_t,\u_t)$. Also using \eqref{eq:metric_derivative} on $(\mathcal{P}_2(\Omega_L),\W_2)$ and \eqref{eq:Wasserstein_2} with the admissible transport plan
\[\gamma=\Big(\frac{1}{n}\sum_{j=1}^n\delta_{(c_t^j,u_t^j)}, \frac{1}{n}\sum_{j=1}^n\delta_{(c_s^j,u_s^j)}\Big)\,,\]
as well as \eqref{eq:d_Omega}, \eqref{eq: tau_dissip} and \eqref{eq:metric_derivative} on $(\Omega_L^n,d_n)$,  we obtain 
\begin{align*}
|\mu_t'| = \lim_{s\rightarrow t}\frac{\W_2(\mu_s,\mu_t)}{|s-t|} \leq   \lim_{s\rightarrow t}\frac{d_n((\c_s,\u_s),(\c_t,\u_t))}{|s-t|} = |(\c_t,\u_t)'|\,.
\end{align*}
Thus, using \eqref{eq:nonlifted_c_o_m_s2}, the above inequality and \eqref{eq:slope_equalities}  already gives
\begin{align*}\label{eq:estimate_curve_max_slope_1}
2 \frac{d}{dt} \mathcal{J}(\mu_t) &= 2 \frac{d}{dt} J_n(\c_t,\u_t) \leq - |(\c_t,\u_t)'|^2- |\partial J_n|^2(\c_t,\u_t)  \\[3pt]
&\leq -|\mu_t'|^2 - |\partial J_n|^2(\c_t,\u_t)=-|\mu_t'|^2 - |\partial \mathcal{J}|^2(\mu_t) \ \text{for } \mathcal{L}^1-\mathrm{a.e. }\ t\in[0,1]\,,
\end{align*}
 which shows \eqref{eq:lifted_c_o_m_s}, and concludes the proof of the theorem.
\end{proof}

 Hence, the rest of the section is devoted to the proof of the equality of the corresponding metric slopes of the functionals $J_n$ and $\mathcal{J}$.  

\subsection{The one-particle case $(n=1)$} \label{subsec:one_particle_case}
In the following, we prove \Cref{thm:gradient_equality_many_particle} first in the case $n=1$ by a simple localization argument, as the proof is significantly simpler than for $n>1$. For this, we need the following lemma, the proof of which we defer to  \Cref{app:A_2_proof_5_3} for better readability.
\begin{lem}\label{lem:fixing_of_signs}
Suppose that \ref{assmpt:a1}-\ref{assmpt:a8} hold. Let $n=1$ and $\mu_t = \delta_{(c_t,u_t)}$. For $\mathcal{L}^1$-a.e. $t\in [0,1]$, it holds that
\begin{equation}\label{eq:Wasserstein_limit_1}
\lim_{\nu\stackrel{*}{\rightharpoonup}\mu_t} \frac{\left|\int_{\Omega_L} \F(c^2 Ku)\, d\nu-\F\left(\int_{\Omega_L} c^2 Ku\,d\nu\right)\right|}{\W_2(\nu,\mu_t)} =0\,.
%\quad \text{ for } \nu\rightharpoonup^\ast\mu_t.
\end{equation}
\end{lem}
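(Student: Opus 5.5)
The plan is to show that the Jensen gap of $\F$ is controlled quadratically by $\W_2(\nu,\mu_t)$, so that the quotient in \eqref{eq:Wasserstein_limit_1} is at most $C\,\W_2(\nu,\mu_t)$. This estimate holds for every $t$, not just $\mathcal{L}^1$-a.e. $t$.

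\textbf{Setup and the transport cost to a Dirac.} Write $z(c,u):=c^2Ku\in Y$ for $(c,u)\in\Omega_L$, and set $z_t:=z(c_t,u_t)$ and $\bar z_\nu:=\int_{\Omega_L} z\,d\nu$.
\begin{itemize}
\item Since $c\le L$ and $K$ is bounded on the weak$^*$-compact set $\B$, all these points lie in a fixed bounded convex set $D\subset Y$.
\item Combining the Lipschitz-type bound \eqref{eq:liplike} from \ref{assmpt:a8} with $|c^2-c_t^2|\le 2L|c-c_t|$ gives
\[\|z(c,u)-z_t\|_Y\le C_L\, d_\Omega\big((c,u),(c_t,u_t)\big).\]
\item The only coupling between $\nu$ and the Dirac mass $\mu_t$ is $\nu\otimes\delta_{(c_t,u_t)}$. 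Hence
\[\W_2^2(\nu,\mu_t)=\int_{\Omega_L} d_\Omega^2\big((c,u),(c_t,u_t)\big)\,d\nu.\]
\item Because $\Omega_L$ is compact, $\W_2$ metrizes narrow convergence there. Therefore $\nu\ws\mu_t$ is the same as $\W_2(\nu,\mu_t)\to0$.
\end{itemize}

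\textbf{Taylor expansion around the mean.} Expand $\F$ to second order around $\bar z_\nu$ and integrate against $\nu$. The first-order term $\big(\nabla\F(\bar z_\nu),\,z-\bar z_\nu\big)_Y$ integrates to zero. Using a bound on $\nabla^2\F$ over $D$, as in the proofs of Propositions \ref{prop:local_convexity} and \ref{prop:semiconvexity_of_lifted} (the $\|\F\|_{C^2}$ convention), this yields
\[0\le \int_{\Omega_L}\F(z)\,d\nu-\F(\bar z_\nu)\le \tfrac12\|\F\|_{C^2}\int_{\Omega_L}\|z-\bar z_\nu\|_Y^2\,d\nu .\]
The lower bound $0$ is Jensen's inequality for the convex $\F$. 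In the Hilbert space $Y$ the variance is minimized at the mean, so
\[\int\|z-\bar z_\nu\|^2\,d\nu\le\int\|z-z_t\|^2\,d\nu\le C_L^2\,\W_2^2(\nu,\mu_t).\]
Dividing by $\W_2(\nu,\mu_t)$ bounds the quotient by $C\,\W_2(\nu,\mu_t)$, which tends to $0$.

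\textbf{Main obstacle.} The delicate point is a uniform bound on $\nabla^2\F$ over the bounded set $D$. Twice Fréchet differentiability in infinite dimensions does not give this automatically. If the $C^2$ convention is not available, I would fall back on a localization argument:
\begin{itemize}
\item Split $\Omega_L$ into a small $d_\Omega$-ball of radius $r$ around $(c_t,u_t)$ and its complement.
\item On the ball, use the second-order Taylor expansion of $\F$ at $z_t$ with a remainder that is $o(\|z-z_t\|^2)$.
\item On the complement, use Chebyshev's inequality $\nu(\text{far})\le \W_2^2(\nu,\mu_t)/r^2$. Combine it with local Lipschitz bounds on $\F$ near $z_t$, and with lower semicontinuity and convexity to control the far contribution.
\item Finally, let $\W_2(\nu,\mu_t)\to0$ first and then $r\to0$.
\end{itemize}
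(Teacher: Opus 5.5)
Your argument is correct, and it takes a different and shorter route than the paper's.

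\textbf{The paper's route.} The paper expands $\F(c^2Ku)$ to first order around the fixed point $z_t=c_t^2Ku_t$, and separately expands $\F(z_t)$ around the barycenter $\bar z_\nu$. The two linear terms combine into $(\nabla\F(z_t)-\nabla\F(\bar z_\nu),\bar z_\nu-z_t)_Y$, which is bounded by $\|\F\|_{C^2}\W_2^2(\nu,\mu_t)$. The first remainder is handled by localization: on $B_{\delta,t}$ it is at most $\varepsilon\|c^2Ku-z_t\|_Y$, and off $B_{\delta,t}$ one uses $\nu(\Omega_L\setminus B_{\delta,t})\le \W_2^2(\nu,\mu_t)/\delta^2$. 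The second remainder is controlled by $\|z_t-\bar z_\nu\|_Y\le C\,\W_2(\nu,\mu_t)$. This only shows that the quotient is $o(1)$.

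\textbf{Your route.} Your single second-order expansion at $\bar z_\nu$ makes the linear term vanish exactly. The variance in $Y$ is minimized at the mean, so everything reduces to $\int\|z-z_t\|_Y^2\,d\nu\le C_L^2\W_2^2(\nu,\mu_t)$. This gives the explicit bound $C\,\W_2(\nu,\mu_t)$ with no localization.

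\textbf{Shared assumption.} Both proofs rest on the same uniform Hessian bound on the closed convex hull $D$ of $\{c^2Ku:(c,u)\in\Omega_L\}$. In the paper this bound is also tacitly needed to make the remainder at the $\nu$-dependent base point $\bar z_\nu$ uniformly $o(\|y\|_Y)$. This set is norm-compact by \ref{assmpt:a5}, so $D$ is compact by Mazur's theorem, and the bound holds whenever $\nabla^2\F$ is continuous. Your observation that no restriction to a.e.\ $t$ is needed is also right.

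\textbf{Transfer to the many-particle case.} The same variance argument would carry over to \Cref{lem:exchange_f_with_int_jn}. There, $\int_{\Omega_L^n}J_n\,d\overline{\nu}^n-\mathcal{J}(\tilde\nu)$ is again a Jensen gap of $\F$, now under the push-forward of $\overline{\nu}^n$ by $(\c,\u)\mapsto\frac1n\sum_j(c^j)^2Ku^j$, whose mean is $\int_{\Omega_L}c^2Ku\,d\tilde\nu$. The second moment is then controlled by the bound $\int d_n^2\,d\overline{\nu}^n\le\alpha_\nu\W_2^2(\nu,\mu_t)$ established in the proof of \Cref{lem:quantitative_estimates}.

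\textbf{On your fallback.} Its cleanest form expands both $\F(c^2Ku)$ and $\F(\bar z_\nu)$ around the fixed point $z_t$, so that the linear terms cancel identically. Combined with the ball/complement split above, this needs only first-order Fr\'echet differentiability at $z_t$ and boundedness of $\F$ on the compact set $\{c^2Ku:(c,u)\in\Omega_L\}$. It therefore removes the Hessian bound altogether.
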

\noindent Then we can show the slope equality for $n=1$.
\begin{prop}
\label{prop:gradient_equality_one_particle}
In the setting of \Cref{thm:equivalence_of_curves_of_max_slope}  and \Cref{lem:fixing_of_signs},  one has $|\partial \mathcal{J}|(\mu_t) = |\partial J_1|(c_t,u_t)$.

\begin{proof}
Note that by the definitions \eqref{eq:local_slope}, 
\eqref{eq:discrete_version_of_energy}, and  \eqref{eq:repr_I}, 
\begin{align*}
|\partial \mathcal{J}|(\mu_t)=\limsup_{\tilde \mu\stackrel{*}{\rightharpoonup}\mu_t}\frac{(\mathcal{J}(\mu_t)-\mathcal{J}(\tilde\mu))^+}{\W_2(\mu_t,\tilde\mu)}&\geq \limsup_{(\tilde c,\tilde u)\to (c_t,u_t)}\frac{(\mathcal{J}(\delta_{( c_t,u_t)})-\mathcal{J}(\delta_{( \tilde c, \tilde u)}))^+}{\W_2(\delta_{(c_t,u_t)},\delta_{(\tilde c,\tilde u)})}\\
&=\limsup_{(\tilde c,\tilde u)\to (c_t,u_t)}\frac{(J_1(c_t,u_t)-J_1(\tilde c,\tilde u))^+}{d_\Omega((c_t,u_t),(\tilde c,\tilde u))}=|\partial J_1|(c_t,u_t)\,,
\end{align*}
\textit{i.e.}, the inequality $|\partial  \mathcal{J}|(\mu_t) \geq |\partial J_1|(c_t,u_t)$ is a direct consequence of the definitions.

For the reverse inequality, let $\nu\in \mathcal{P}_2(\Omega_L)$. By  \eqref{eq:repr_I}, and that $\mathcal{R}(u)=1$, 
we can estimate
\begin{align}\label{eq:1_particle_reverse}
 \mathcal{J}(\mu_t)- \mathcal{J}(\nu)  & = J_1(c_t,u_t) -  \int_{\Omega_L} J_1(c,u)\,d\nu + \int_{\Omega_L} J_1(c,u)\,d\nu -\F\left(\int_{\Omega_L} c^2 Ku\, d\nu\right) - \int_{\Omega_L} c^2\, d\nu \notag\\
&\leq \int_{\Omega_L} ( J_1(c_t,u_t) -   J_1(c,u))^+\,d\nu + \left|\int_{\Omega_L} \F(c^2Ku)\, d\nu-\F\left(\int_{\Omega_L} c^2 Ku\, d\nu\right)\right|\,. 
\end{align} 
Since
\begin{align*}
|\partial J_1|(c_t,u_t) = \limsup_{(c,u)\rightarrow (c_t,u_t)} \frac{\left(J_1(c_t,u_t) - J_1(c,u)\right)^+}{d_{\Omega}((c_t,u_t), (c,u))}\,,
\end{align*}
for every $\varepsilon >0$ there exists $\delta >0$ such that if $d_{\Omega}( (c_t,u_t), (c,u)) \leq \delta$, then 
\begin{align*}
\frac{ ( J_1(c_t,u_t) - J_1(c,u) )^+ }{d_{\Omega}((c_t,u_t), (c,u))} \leq |\partial J_1|(c_t,u_t) + \varepsilon\,.
\end{align*}
Note also that, thanks to \cite[Theorem 5.3.1]{AGS05} and Jensen's inequality, for $\pi\in \Gamma_0( \nu,\mu_t)$ (cf. the notation after \eqref{eq:d_Omega}),
\begin{align*}
 \W_2(\nu,\mu_t) = \left( \int_{\Omega_L}\int_{\Omega_L} d_\Omega^2((c,u),(\tilde{c},\tilde{u}))\, d\pi ((c,u),(\tilde{c},\tilde{u}))\right)^{1/2} &=  \left(\int_{\Omega_L} d_\Omega^{ 2}((c_t,u_t),(c,u)) \,d\nu(c,u)\right)^{1/2}\\
&\geq \int_{\Omega_L} d_\Omega((c_t,u_t),(c,u)) \,d\nu(c,u)\,.
\end{align*}
Therefore, denoting for brevity 
\begin{equation}\label{eq:delta_t_ball}
B_{\delta,t}:=B_\delta((c_t,u_t))\,,
\end{equation}
the latter intending the $\delta$-ball in $\Omega_L$ centered in $(c_t,u_t)$, we can easily estimate,
\begin{align}
\label{eq:fraction_w_2_leq1}
\frac{\int_{B_{\delta,t}} d_{\Omega}((c_t,u_t),(c,u))\, d\nu}{\W_2(\nu,\mu_t)} \leq \frac{\int_{\Omega_L} d_\Omega((c_t,u_t),(c,u))\, d\nu}{\W_2(\nu,\mu_t)} \leq 1\,.
\end{align}
By using \eqref{eq:fraction_w_2_leq1} we can therefore decompose 
\begin{align}\label{eq:slope_1_est}
\int_{\Omega_L} \frac{(J_1(c_t,u_t) - J_1(c,u) )^+}{\W_2(\nu, \mu_t)}\, d\nu & = \int_{ B_{\delta,t}} \frac{( J_1(c_t,u_t) - J_1(c,u))^+}{\W_2(\nu, \mu_t)}\, d\nu + \int_{\Omega_L\setminus B_{\delta,t}} \frac{(J_1(c_t,u_t) - J_1(c,u))^+}{\W_2(\nu, \mu_t)}\, d\nu\notag \\[3pt]
& \leq  
\frac{(|\partial J_1|(c_t,u_t) + \varepsilon)\int_{B_{\delta,t}} d_{\Omega}((c_t,u_t), (c,u))\, d\nu }{ \W_2(\nu,\mu_t)}  \notag\\[3pt]
& + \qquad \int_{\Omega_L\setminus B_{\delta,t} } \frac{(J_1(c_t,u_t) - J_1(c,u))^+}{ \W_2(\nu, \mu_t)}\, d\nu\notag \\
& \leq |\partial J_1|(c_t,u_t) + \varepsilon + 2 \Big(\sup_{(c,u)\in \Omega_L} J_1(c,u) \Big)\frac{\nu(\Omega_L\setminus B_{\delta,t})}{\W_2(\nu, \mu_t)}.
\end{align}
In addition, for every $\delta >0$ it holds that 
\begin{equation}\label{eq:n_conc_out_of_B_delta}
\lim_{\nu\rightarrow^\ast \mu_t} \frac{\nu( \Omega_L\setminus B_{\delta,t})}{ \mathcal{W}_2(\nu,\mu_t)} = 0\,.
\end{equation} 
 Indeed,  
\begin{align}
\label{eq:quantitative_estimate_1n}
\W^2_2(\nu,\mu_t)   \geq \int_{ \Omega_L\setminus B_{\delta,t}}  d^2_{\Omega}((c_t,u_t), (c,u))\,d\nu \geq \delta^2 \nu( \Omega_L\setminus B_{\delta,t})\,\implies \nu(\Omega_L\setminus B_{\delta,t}) \leq \frac{\W^2_2(\nu,\mu_t)}{\delta^2}\,,
\end{align}
from which \eqref{eq:n_conc_out_of_B_delta} follows. 
Therefore, if $(\nu_k)_{k\in \N}\subset \mathcal{P}_2(\Omega_L)$, with $\nu_k\stackrel{*}{\rightharpoonup}\mu_t$ as $k\to \infty$, is a sequence attaining the lim sup in the definition of $|\partial \mathcal{J}|(\mu_t)$, we can combine \eqref{eq:1_particle_reverse}, \eqref{eq:Wasserstein_limit_1}, \eqref{eq:slope_1_est} and \eqref{eq:n_conc_out_of_B_delta}, to obtain
\begin{align*}
|\partial \mathcal{J}|(\mu_t)&=\limsup_{k\to \infty}\frac{\left( \mathcal{J}(\mu_t)- \mathcal{J}(\nu_k)\right)^+}{\W_2(\nu_k, \mu_t)} \\
&
\leq |\partial J_1|(c_t,u_t) + \varepsilon + 2 \Big(\sup_{(c,u)\in \Omega_L} J_1(c,u) \Big) \limsup_{k\to \infty}\frac{\nu_k( \Omega_L\setminus B_{\delta,t})}{ \W_2(\nu_k, \mu_t)}
\\ 
&
\leq |\partial J_1|(c_t,u_t) + \varepsilon\,.
\end{align*}
and since $\varepsilon$ is arbitrary, this concludes the proof of the inequality $|\partial \mathcal{J}|(\mu_t)\leq |\partial J_1|(c_t,u_t)$, and thus of the proposition. 
\end{proof}
\end{prop}

\subsection{The many-particle case}
\label{subsec:many_particle case}
In this subsection we generalize \Cref{prop:gradient_equality_one_particle} to $n$ particles,  hence dealing now with the case $(\c_t,\u_t)\in \Omega_L^n$. To this end, one needs to generalize known facts about semi-discrete optimal transport to the setting of metric spaces, which for convenience of the reader we defer to in Subsection \ref{subsec:semi_discrete_ot}.
Throughout this section, we will always assume that the particles are distinct (see Remark \ref{eq:distinct}) and we take $\delta >0$ small enough such that the balls
\begin{equation}\label{eq:balls_disjoint}
B_\delta((c^j_t,u^j_t)) \ \text{are pairwise disjoint}\,.
\end{equation}

For brevity, with a slight abuse of notation in this  subsection, for fixed $(\c_t,\u_t)\in \Omega_L^n$, we will also write
\begin{align}
\label{eq:def_y_i_and_Y}
\begin{aligned}
\mathrm{(i)}& \quad  \y_t:= (\c_t,\u_t) \in \Omega_L^n\,,  \quad y_t^j := (c_t^j,u_t^j) \in \Omega_L \ \ \text{for } j\in\{1,\dots,n\}\,, \\[2pt] 
\mathrm{(ii)}& \quad \x:= (\c,\u) \in \Omega_L^n\,,  \qquad  x := (c,u) \in \Omega_L \,, \\[2pt] 
\mathrm{(iii)}& \quad Y := \{y_t^1,\dots,y_t^n\} = \{(c_t^1,u_t^1),\dots,(c_t^n,u_t^n)\} \subset \Omega_L,\\
\mathrm{(iv)}& \quad B_\delta(y_t^j):=  B_\delta((c_t^j,u_t^j)) \subset \Omega_L,  \text{ and }
B_\delta(\y_t):= B_\delta((\c_t,\u_t)) \subset \Omega_L^n\,,
\end{aligned}
\end{align}
where in the last shorthand notation we actually intend 
\begin{equation}\label{eq:n_ball_definition}
B_\delta(\y_t):=B_\delta(\c_t,\u_t):=B_\delta(c_t^1,u_t^1)\times\dots \times B_\delta(c_t^n,u_t^n)\,.
\end{equation}
To show \Cref{thm:gradient_equality_many_particle}, we first need to establish some auxiliary lemmata. For this purpose, we recall once again the definition of the measure $\mu_t$ in \eqref{eq:mu_t_def}. 

\begin{lem}[\textit{Mass correction}]
\label{lem:exchange_limiting_sequence}
Suppose that \ref{assmpt:a1}-\ref{assmpt:a8} hold. With the notation of \eqref{eq:def_y_i_and_Y}, let $\delta>0$ be such that $\eqref{eq:balls_disjoint}$ holds.   Then, 
 \begin{equation}\label{eq:reduced_convergence_equivalent}
\limsup_{\nu \stackrel{*}{\rightharpoonup}\mu_t} \frac{J_n(\c_t,\u_t)-\mathcal{J}(\nu)}{\W_2(\nu,\mu_t)} = \limsup_{\nu \stackrel{*}{\rightharpoonup}\mu_t} \frac{J_n(\c_t,\u_t)- \mathcal{J}(\tilde{\nu})}{\W_2(\nu,\mu_t)}\,,
\end{equation}
 with $\tilde{\nu}$ being defined as
\begin{equation}\label{eq:nu_decomp}
\tilde{\nu}:=\frac{1}{n} \sum_{j=1}^n \frac{\nu|_{B_\delta(y^j_t)}}{\nu(B_\delta(y^j_t))}\,.
\end{equation}
\end{lem}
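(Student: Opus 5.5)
The plan is to show that replacing $\nu$ by $\tilde\nu$ changes $\mathcal{J}$ only by an amount of order $\W_2^2(\nu,\mu_t)$. Precisely, I aim for
\[
|\mathcal{J}(\nu)-\mathcal{J}(\tilde\nu)|\leq C_{\delta}\,\W_2^2(\nu,\mu_t)
\]
for all $\nu$ sufficiently close to $\mu_t$. Dividing by $\W_2(\nu,\mu_t)$ and letting $\nu\stackrel{*}{\rightharpoonup}\mu_t$ makes the difference of the two quotients vanish, and the two $\limsup$'s in \eqref{eq:reduced_convergence_equivalent} coincide. The denominators are identical, so nothing else needs checking.

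\emph{Step 1 (mass estimate).} Set $m_j:=\nu(B_\delta(y_t^j))$ and $r:=\nu\big(\Omega_L\setminus\bigcup_j B_\delta(y_t^j)\big)$, and take $\pi\in\Gamma_0(\nu,\mu_t)$. By \eqref{eq:balls_disjoint}, $d_\Omega(y^i_t,y^j_t)\geq 2\delta$ for $i\neq j$. Hence every point of $B_\delta(y^j_t)$ lies at distance at least $\delta$ from every $y^i_t$ with $i\neq j$, and every point outside all balls lies at distance at least $\delta$ from all of $Y$. Since $\mu_t$ gives mass $1/n$ to each $y^j_t$, the amount $|m_j-\tfrac1n|$, and likewise $r$, is bounded by the $\pi$-mass transported over distance at least $\delta$. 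As in \eqref{eq:quantitative_estimate_1n}, this gives
\[
\sum_j|m_j-\tfrac1n|+r\leq C\,\frac{\W_2^2(\nu,\mu_t)}{\delta^2}.
\]
In particular $m_j>0$ for $\nu$ close to $\mu_t$, so $\tilde\nu$ in \eqref{eq:nu_decomp} is well defined and lies in $\mathcal{P}_2(\Omega_L)$.

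\emph{Step 2 (comparison of the integrals).} Write
\[
\tilde\nu-\nu=\sum_j\Big(\frac{1}{n m_j}-1\Big)\nu|_{B_\delta(y^j_t)}-\nu|_{\Omega_L\setminus\cup_j B_\delta(y^j_t)}.
\]
Its total variation is at most $\sum_j\frac{|1/n-m_j|}{n m_j}\,m_j\cdot n+r\leq C\,\W_2^2/\delta^2$, with a constant depending only on $n$. The integrands $c^2$ and $c^2Ku$ are bounded on $\Omega_L$: $c\leq L$, and $\|Ku\|_Y\leq \|K\|\sup_{\B}\|u\|_\M<\infty$ by \ref{assmpt:a4}. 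Therefore
\[
\Big|\int c^2\,d\nu-\int c^2\,d\tilde\nu\Big| \quad\text{and}\quad \Big\|\int c^2Ku\,d\nu-\int c^2Ku\,d\tilde\nu\Big\|_Y
\]
are both $\leq C\,\W_2^2(\nu,\mu_t)$.

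\emph{Step 3 (the fidelity term).} Both arguments of $\F$ lie in a fixed bounded subset of $Y$. Since $\F$ is $C^2$ by \ref{assmpt:a2}, it is Lipschitz on that bounded set, so the $\F$-terms also differ by at most $C\,\W_2^2$. Combining with Step 2 yields the claimed estimate.

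The only delicate point is Step 1: it must be verified that the mismatch of masses inside the balls costs squared transport distance and is therefore quadratic, not merely linear, in $\W_2$. This is exactly where the disjointness \eqref{eq:balls_disjoint} (with separation $2\delta$) enters. Steps 2 and 3 are routine bounded-integrand estimates.
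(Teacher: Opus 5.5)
Your plan has the same structure as the paper's proof. Both bound $|\mathcal{J}(\nu)-\mathcal{J}(\tilde\nu)|$ by a constant times $|\nu-\tilde\nu|(\Omega_L)=\sum_j|m_j-\frac1n|+r$, and then show this quantity is $O(\W_2^2(\nu,\mu_t)/\delta^2)$.

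Your Step 1 is more elementary than the paper's. The paper splits $\nu(B_\delta(y^j_t))$ along the Laguerre cells $A_j$ of a dual maximizer and uses $\pi(A_j\times\{y^j_t\})=\frac1n$ from \Cref{lem:properties_semi_discrete_coupling}. You only need the marginal identity $\pi(\Omega_L\times\{y^j_t\})=\frac1n$. It gives $m_j-\frac1n=\pi\big(B_\delta(y^j_t)\times(Y\setminus\{y^j_t\})\big)-\pi\big((\Omega_L\setminus B_\delta(y^j_t))\times\{y^j_t\}\big)$, and both sets lie in $\{d_\Omega\geq\delta\}$, so no Kantorovich potentials are needed. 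The $2\delta$-separation you assert is true because $\Omega_L$ is geodesic, but you never use it. Disjointness alone already gives $d_\Omega(x,y^i_t)\geq\delta$ for $x\in B_\delta(y^j_t)$ and $i\neq j$.

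\textbf{The gap is in Step 3.} Boundedness of the arguments of $\F$ is not enough. In an infinite-dimensional Hilbert space, a function satisfying \ref{assmpt:a2} need not be Lipschitz, or even bounded, on bounded sets. For example, on $Y=\ell^2$ take $\F(y):=\sum_{k\geq1}k\big((y_k-\tfrac12)^+\big)^3$. It is convex, nonnegative and $C^2$, since near any point only finitely many terms are nonzero. Yet $\F(e_k)=k/8\to\infty$ on the unit sphere.

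The missing ingredient is compactness, which the paper obtains from \ref{assmpt:a5}:
\begin{itemize}
\item The map $(c,u)\mapsto c^2Ku$ is continuous from the compact space $\Omega_L$ into $Y$ with the norm topology, so its image $S$ is norm-compact.
\item Every $\int_{\Omega_L}c^2Ku\,d\sigma$ with $\sigma\in\mathcal{P}(\Omega_L)$ lies in $\overline{\mathrm{conv}}\,S$, by Hahn--Banach separation.
\item $\overline{\mathrm{conv}}\,S$ is compact by Mazur's theorem, so $\nabla\F$ is bounded on it.
\item The mean value inequality along segments in this convex set then gives the Lipschitz bound you need.
\end{itemize}
Alternatively, bound $\nabla\F$ on a small ball around $\frac1n\sum_j(c^j_t)^2Ku^j_t$, and note that both arguments of $\F$ converge to this point in norm, for instance via \eqref{eq:liplike} and an optimal coupling. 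With either repair your argument is complete, and it even gives the stronger estimate $|\mathcal{J}(\nu)-\mathcal{J}(\tilde\nu)|\leq C\,\W_2^2(\nu,\mu_t)$.
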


\begin{rem}\label{eq:distinct}
 Before giving the proof of \Cref{lem:exchange_limiting_sequence}, we mention that without loss of generality we additionally assume the particles to be distinct, \textit{i.e.},
 \begin{equation*}
\delta_{y^i}\neq\delta_{y^j} \quad \text{for }i\neq j\,.
\end{equation*}
In case this does not hold, all of the statements still follow, but one needs to account for multiplicities. For instance, with approximating measures $\nu$ as in \eqref{eq:reduced_convergence_equivalent}-\eqref{eq:nu_decomp}, we have 
\begin{equation*}
\nu|_{B_\delta(y^j_{ t}))} \stackrel{*}{\rightharpoonup} \frac{k_j}{n} \delta_{y^j_{ t}}\,, \  \text{where } k_j:=\#\{i\colon y_t^i=y_t^j\}\,.
\end{equation*}
In view of the definitions of the \textit{Laguerre cells} and the corresponding dual maximizers in \Cref{subsec:semi_discrete_ot}, we would then have for all optimal couplings $\pi \in \Gamma_0(\nu,\mu_t)$ that
\begin{equation*}
\frac{k_j}{n}= \pi(A_j\times \{y^j\})\,,
\end{equation*}
allowing one to directly generalize the proofs. 
\end{rem}
\begin{proof}
It is enough to show that
\begin{equation}\label{eq:nu_tilde_nu_close}
\lim_{\nu \stackrel{*}{\rightharpoonup}\mu_t} \frac{\left|\mathcal{J}(\nu)-\mathcal{J}(\tilde{\nu})\right|}{\W_2(\nu,\mu_t)} = 0\,.
\end{equation}
 For this purpose, set  $\nu_R := \nu -\tilde{\nu}$.  By the definition of $\mathcal{J}$ in \eqref{eq:repr_I}, \eqref{eq:nu_decomp}, the triangle inequality and \ref{assmpt:a2}, we have 
\begin{align*}
\left| \mathcal{J}(\nu)-\mathcal{J}(\tilde{\nu})\right| &\leq  \left| \mathcal{F}\left(\int_{\Omega_L} c^2Ku\,d(\tilde{\nu}+\nu_R)\right)- \mathcal{F}\left(\int_{\Omega_L} c^2Ku\,d\tilde{\nu}\right)\right| + \left|\int_{\Omega_L}  c^2\,d(\tilde{\nu}+\nu_R)-\int_{\Omega_L}  c^2\,d\tilde{\nu}\right| \\
&\leq \sup_{y\in A_\nu}\|\nabla  \mathcal{F}\|(y) \cdot \left|\int_{\Omega_L} c^2Ku\,d(\tilde{\nu}+\nu_R)-\int_{\Omega_L} c^2Ku\,d\tilde{\nu}\right| + \left|\int_{\Omega_L}  c^2 \,d\nu_R\right| \\
&\leq \left(\sup_{y\in A_\nu}\|\nabla \mathcal{F}\|(y) \cdot \sup_{(c,u)\in\Omega_L}\|c^2Ku\|_Y+ L^2\right) \, |\nu_R|(\Omega_L)\,,
\end{align*}
where $A_\nu\subset Y$ is a compact set containing both $\int_{\Omega_L} c^2Ku \, d\nu$ and $\int_{\Omega_L} c^2Ku \, d\tilde{\nu}$ for all $\nu$ in the fixed sequence $\nu \stackrel{*}{\rightharpoonup} \mu_t$, so that $\mathcal{F}$ is Lipschitz on it, since it is globally twice Frech\'et differentiable.
Note that $A_\nu$ is compact since the map $\mu \mapsto \int_{\Omega_L}c^2Ku\, d\mu$ is continuous and $K$ is weak*-to-strong continuous, see \ref{assmpt:a5}. 
In view of the above chain of inequalities, in order to show \eqref{eq:nu_tilde_nu_close} we are only left with verifying that
\begin{align}
\label{eq:rest_term_conv_0}
\lim_{\nu\stackrel{*}{\rightharpoonup}\mu_t} \frac{|\nu_R|(\Omega_L)}{\W_2(\nu,\mu_t)}=0\,.
\end{align}
For this, notice first that by the definition of $\tilde{\nu}$ in \eqref{eq:nu_decomp}  and \eqref{eq:balls_disjoint}, setting 
 \begin{equation}\label{eq:B_notation}
 B_{\delta}:=\overset{n}{\underset{j=1}{\bigcup}}B_\delta(y^j_{t})\, \ \text{and }\ C_{ \delta}:= \Omega_L\setminus B_{ \delta}\,,
 \end{equation}
we can further decompose   
\begin{equation}\label{eq:nu_R_decomp}
\nu_{R} = \nu|_{C_{ \delta}}+ (\nu|_{B_{\delta}} -\tilde{\nu}) = \nu|_{C_{\delta}} +\sum_{j=1}^n \nu|_{B_\delta(y^j_{ t})} \cdot
\left(1-\frac{\nu(B_\delta(y^j_{ t}))^{-1}}{n} \right)\,.
\end{equation}
As $d^2_{\Omega}(z,y^j_{t}) \geq \delta^2$ for all $y^j_{t}\in Y$ and $z\in C_{\delta}$ (cf. \eqref{eq:def_y_i_and_Y}), for an optimal coupling $\pi \in\Gamma_0(\nu,\mu_t)$, we estimate 
\begin{align*}
\nu(C_{\delta})  = \pi\left(C_{\delta}\times Y\right) \leq \frac{1}{\delta^2}\int_{C_{\delta}\times Y} d_\Omega^2(z,y)\, d\pi(z,y) \leq \frac{\W_2^2(\nu,\mu_t)}{\delta^2}\,,
\end{align*}
so that
\begin{equation}
\label{eq:convergence_c_rest_term_0}
 \lim_{\nu\stackrel{*}{\rightharpoonup}\mu_t}  \frac{\nu(C_{\delta})}{\W_2(\nu,\mu_t)}  = 0\,.
\end{equation}
Therefore, in view of \eqref{eq:nu_R_decomp},  for the verification of \eqref{eq:rest_term_conv_0},  one only has to consider
\begin{align*}
\nu'_{R} :=\sum_{j=1}^n \nu|_{B_\delta(y^j_{ t})} \cdot
\left(1-\frac{\nu(B_\delta(y^j_{t}))^{-1}}{n} \right)\,.
 \end{align*}
As
\begin{align*}
|\nu'_{R}|(\Omega_L) = \sum_{j=1}^n \nu(B_\delta(y^j_{t})) \cdot
\left|1-\frac{\nu(B_\delta(y^j_{t}))^{-1}}{n} \right| = \sum_{i=1}^n  
\left|\nu(B_\delta(y^j_{t}))-\frac{1}{n} \right|\,,
\end{align*}
it is sufficient to show that for every fixed $j\in \{1,\dots,n\}$,
\begin{equation}
\label{eq:convergence_to_1}
 \lim_{\nu\stackrel{*}{\rightharpoonup}\mu_t} \frac{\left|\nu(B_\delta(y^j_{ t}))-\frac{1}{n} \right|}{\W_2(\nu,\mu_t)}  =0 \,.
\end{equation}
To this end, take again an optimal coupling $\pi \in \Gamma_0(\nu,\mu_t)$. Then for $Y_j := Y\setminus \{y^j_{t}\}$, by \Cref{lem:properties_semi_discrete_coupling}, one has
\begin{align*}
\nu(B_\delta(y^j_{ t})) &=\nu(B_\delta(y^j_{ t})\cap A_j) + \nu(B_\delta(y^j_{ t}) \cap A_j^c) = \pi((B_\delta(y^j_{ t})\cap A_j)\times \{y^j_{t}\}) + \pi((B_\delta(y^j_{ t}) \cap A_j^c)\times Y_j)\,.
\end{align*}
By \eqref{eq:mu_t_def} and Lemma \ref{lem:properties_semi_discrete_coupling}, we have that $\frac{1}{n}=\pi(A_j\times \{y_t^j\})$,  hence  
\begin{align*}\Big|\nu(B_\delta(y^j_t))-\frac{1}{n}\Big|&=  \left|\pi((B^c_\delta(y^j_{t})\cap A_j)\times \{y_t^j\}) - \pi((B_\delta(y^j_{ t}) \cap A_j^c)\times Y_j)\right|\\
&\leq \int_{(B^c_\delta(y^j_{t})\cap A_j)\times \{y^j_t\}}1 \, d\pi+\int_{(B_\delta(y^j_t) \cap A_j^c)\times Y_j}1\, d\pi\\
&\leq \frac{1}{\delta^2}\int_{(B^c_\delta(y^j_t)\cap A_j)\times \{y^j_t\}} d_\Omega^2(x,y)\, d\pi(x,y)+\frac{1}{\delta^2}\int_{(B_\delta(y^j_t) \cap A_j^c)\times Y_j} d^2(x,y)\, d\pi(x,y)\\
& \leq \frac{\W_2^2(\nu,\mu_t)}{\delta^2} + \sum_{i\neq j} \frac{1}{\delta^2}\int_{(B_\delta(y^j_t) \cap A_j^c)\times \{y_t^i\}} d^2(x,y)\, d\pi(x,y) \\
&\leq C \frac{\W_2^2(\nu,\mu_t)}{\delta^2}\,,
\end{align*}
where we have used that for all $i\neq j$, $B_\delta(y^j_t) \subset B^c_\delta(y^i_t)$, cf. \eqref{eq:balls_disjoint}. The above inequality implies  \eqref{eq:convergence_to_1}, which together with \eqref{eq:convergence_c_rest_term_0}, implies the desired  convergence \eqref{eq:rest_term_conv_0}. \qedhere
%As $1 \geq \nu(\cup_{i=1}^nB_\delta(y_i))$, the above is a consequence of the following:\\
%Take an optimal coupling $\pi$ and observe that
%\begin{align*}
%    1 - \nu(B) &= \pi(\Omega_L\times Y) - \pi(B\times Y) = \pi(C \times Y) \\
%    &= \int_{C\times Y} 1 \, d\pi(x,y) \leq \frac{1}{\delta^2}\int_{C\times Y} d^2(x,y) \, d\pi(x,y)  \leq \frac{\W_2^2(\nu,\mu)}{\delta^2}.
%\end{align*}
        
%Take an optimal coupling $\pi$ and observe that by \Cref{lem:crucial_integral_equality},
%\begin{align*}
%   1 &= \pi(\Omega_L \times Y) = \sum_{j=1}^n \nu(A_j\setminus \Sigma_j) + \pi(\Sigma\times Y),\\
%    \nu(\tilde{B}) &= \sum_{j=1}^n \nu(\tilde{B}\cap (A_j\setminus \Sigma_j)) + \nu(\tilde{B}\cap \Sigma) = \sum_{j=1}^n \nu(\tilde{B}\cap (A_j\setminus \Sigma_j)) + \pi((\tilde{B}\cap \Sigma) \times Y).
%\end{align*}
%Therefore,
%\begin{align*}
%    1 - \nu(\tilde{B}) &= \sum_{j=1}^n \nu((A_j\setminus \Sigma_j)\cap\tilde{B}^c) + \pi((\Sigma\cap\tilde{B}^c)\times Y) = \sum_{j=1}^n \int_{(A_j\setminus \Sigma_j)\cap\tilde{B}^c} 1\,d\nu(x) + \int_{(\Sigma \cap\tilde{B}^c)\times Y} 1\,d\nu(x) \\
%    &\leq \frac{1}{\delta^2} \sum_{j=1}^n \int_{(A_j\setminus \Sigma_j)\cap\tilde{B}^c} d^2(x,y_j)\,d\nu(x) + \frac{1}{\delta^2}\int_{(\Sigma \cap\tilde{B}^c)\times Y} d^2(x,y_j)\,d\nu(x) \\
%   &\leq \frac{1}{\delta^2} \sum_{j=1}^n \int_{A_j\setminus \Sigma_j} d^2(x,y_j)\,d\nu(x) + \frac{1}{\delta^2}\int_{\Sigma \times Y} d^2(x,y_j)\,d\nu(x)= \frac{\W_2(\nu,\mu_t)}{\delta^2},
%\end{align*}
\end{proof}
\noindent From now on we set
\begin{equation}\label{eq:notation_nu_i}
\nu_j:= \frac{\nu|_{B_\delta(y^j_t)}}{\nu(B_\delta(y^j_t))}\,,\quad \tilde{\nu}:= \frac{1}{n}\sum_{j=1}^n\nu_j\,,\quad \overline{\nu}^n := \nu_1\otimes\dots\otimes \nu_n\,,
\end{equation}
so that by \eqref{eq:nu_decomp}, $\nu= \frac{1}{n}\sum_{j=1}^n\nu_j+\nu_R$. Next, we also need an estimate on $\int_{B_\delta(\y_t)} d_n(\x,\y_t)\, d\overline{\nu}^n(\x)\,,$ where we recall the notation in \eqref{eq:def_y_i_and_Y}. Note that we also used a similar argument for the one-particle case in \Cref{prop:gradient_equality_one_particle}, however, since in that case there is only one Laguerre cell, the  following estimate  was trivial for $n=1$.
\begin{lem}[\textit{Quantitative Estimate}]
\label{lem:quantitative_estimates}
Suppose that \ref{assmpt:a1}-\ref{assmpt:a8} hold, and $\delta>0$ is small enough such that also \eqref{eq:balls_disjoint} holds. Then,  
\begin{align}
 \label{eq:gf_equal_leq1_inequality}
&\int_{B_\delta (\y_t)} \frac{d_n( \x, \y_t)}{\W_2(\nu,\mu_t)} d\overline{\nu}^n( \x) \leq \alpha_\nu, \text{ with } \lim_{\nu\stackrel{*}{\rightharpoonup} \mu_t}\alpha_\nu= 1\,.
\end{align}
\begin{proof} 
First of all, note that by \eqref{eq:n_ball_definition} and \eqref{eq:notation_nu_i}, one has $\overline{\nu}^n(B_\delta(\y_t))=1$. Then, by Jensen's inequality,  the choice of $\delta>0$, \eqref{eq: tau_dissip}, that $\nu_j(B_\delta(y_t^j))=1$ for all $j\in\{1,\dots,n\}$, and for $\pi \in \Gamma_0(\nu,\mu_t)$, we estimate 

\begin{align*}
\left(\int_{B_\delta(\y_t)} d_n(\mathbf{x}, \y_t) \,d\overline{\nu}^n(\textbf{x})\right)^2 \leq &\  \int_{B_\delta(\y_t)} d_n^2(\mathbf{x}, \y_t) \,d\overline{\nu}^n(\mathbf{x})\\
 = & \int_{\overset{n}{\underset{i=1}{\bigtimes}}B_\delta(y^i_t)} d_n^2((x^1, y_t^1), \ldots  , (x^n, y_t^n)) \,d\nu_1(x^1) \ldots d\nu_n(x^n)\\
 = & \int_{\overset{n}{\underset{i=1}{\bigtimes}}B_\delta(y^i_t)} \frac{1}{n} \sum_{j=1}^n d^2_{\Omega}(x^j, y_t^j) \,d\nu_1(x^1) \ldots d\nu_n(x^n) \\
= & \frac{1}{n} \sum_{j=1}^n \int_{\overset{n}{\underset{i=1}{\bigtimes}}B_\delta(y^i_t)} d^2_{\Omega}(x^j, y_t^j) \,d\nu_1(x^1) \ldots d\nu_n(x^n)\\
= & \frac{1}{n} \sum_{j=1}^n \int_{B_\delta(y^j_t)} d^2_{\Omega}(x^j, y_t^j) d\nu_{j}(x^j) \Bigg[\Pi_{i\neq j}  \int_{B_\delta(y_t^i)} d\nu_i(x^i) \Bigg]\\
 =&\ \frac{1}{n}\sum_{j=1}^n\int_{B_\delta( y_t^j)}  d^2_{\Omega}(x, y_t^j) \,d\nu_j(x) \\
=&\ \frac{1}{n}\sum_{j=1}^n\frac{1}{\nu(B_\delta( y_t^j))}\int_{B_\delta( y_t^j)}  d^2_{\Omega}(x, y_t^j) 
\,d\nu(x)\\
\leq&\ \frac{1}{n\cdot\underset{ i\in\{1,\dots,n\}}{\min}\nu(B_\delta( y_t^i)))}\, \sum_{j=1}^n\int_{B_\delta( y_t^j)\times Y}  d^2_{ \Omega}(x, y_t^j)\, d\pi(x,\tilde{y}).
  \end{align*}
Now for every $j\in \{1,\dots,n\}$, and every $x\in B_\delta(y_t^j)$, by assumption it holds that $d^2_{\Omega}(x,y_t^j) \leq d^2_{ \Omega}(x, 
\tilde y)$ for all $\tilde y \in  Y %B_\delta(y_t^j)
$, and  thus
\begin{align*}
\int_{B_\delta(y_t^j)\times Y}  d^2_{\Omega}(x,y_t^j)\, d\pi(x,\tilde{y}) \leq \int_{B_\delta(y_t^i)\times Y}  d^2_{\Omega}(x,\tilde{y})\, d\pi(x,\tilde{y})\,.
\end{align*}
Therefore, setting $\alpha_\nu := \frac{1}{n\cdot\underset{ i\in\{1,\dots,n\}}{\min}\nu(B_\delta(y_t^i))}$,  and recalling the notation in \eqref{eq:B_notation}, the previous estimates imply that 
\begin{align*}
\left(\int_{B_\delta(\y_t))} d_n(\mathbf{x}, \y_t) \,d\overline{\nu}^n(\textbf{x})\right)^2   \leq \alpha_\nu\, \int_{ \mathcal{B}\times Y}  d^2_{ \Omega}(x,y)\, d\pi(x,y)\,, 
\end{align*}
  and since $\pi \in \Gamma_0(\nu,\mu_t)$, we arrive at
\[\left(\int_{B_\delta(\y_t))} d_n(\mathbf{x}, \y_t) \,d\overline{\nu}^n(\textbf{x})\right)^2 \leq \alpha_\nu \W^2_2(\nu,\mu_t)\,,\]
 and $\alpha_\nu \rightarrow 1$ for $\nu \stackrel{*}{\rightharpoonup} \mu_t$, which concludes the proof of the lemma. 
\end{proof}
\end{lem}
Lastly, we need to generalize \Cref{lem:fixing_of_signs}, with the proof also proceeding similarly to \Cref{app:A_2_proof_5_3}.
\begin{lem}
\label{lem:exchange_f_with_int_jn}
 In the setting of \Cref{lem:quantitative_estimates}  %Suppose that Assumptions \ref{assmpt:a1}-\ref{assmpt:a8} hold. For $\delta>0$ small enough so that \eqref{eq:balls_disjoint} holds, 
and recalling \eqref{eq:notation_nu_i}, again for $\mathcal{L}^1$-a.e. $t\in [0,1]$, we have that 
\begin{equation}\label{eq:J_n_J_neglig}
\lim_{\nu\stackrel{*}{\rightharpoonup}\mu_t} \frac{\left|\int_{\Omega_L^n}J_n(\c,\u)\, d\overline{\nu}^n- \mathcal{J}(\tilde{\nu})\right|}{\W_2(\nu,\mu_t)} =0\,. 
\end{equation}
\begin{proof}
First, recalling \eqref{eq:discrete_version_of_energy}, \eqref{eq:repr_I} and  \ref{assmpt:a6}-\ref{assmpt:a7}, one has that
\begin{align*}
\int_{\Omega_L^n}   \mathcal{R} \left(\frac{1}{n} \sum_{j=1}^n (c^j)^2 u^j \right) \, d\bar \nu^n(\c,\u) & = \frac{1}{n} \sum_{j=1}^n \int_{\Omega_L^n}  (c^j)^2 d\bar \nu^n(\c,\u)= \frac{1}{n} \sum_{j=1}^n \int_{\Omega_L}  c^2 d\nu_j(c,u) =
   \int_{\Omega_L} c^2 \, d\tilde \nu(c,u)\,.
\end{align*}
Therefore,
\begin{align}\label{eq:1_J_n_minus_J}
\int_{\Omega_L^n}J_n(\c,\u)\, d\overline{\nu}^n(\c,\u)- \mathcal{J}(\tilde{\nu}) &= \int_{\Omega_L^n}  \F \left(\frac{1}{n}\sum_{j=1}^n(c^j)^2Ku^j\right)\, d\overline{\nu}^n (\c,\u) -  \F\left(\int_{\Omega_L} c^2Ku\, d\tilde{\nu} (c,u)\right) \nonumber\\
&=\underbrace{\int_{\Omega_L^n}  \F\left(\frac{1}{n}\sum_{j=1}^n(c^j)^2Ku^j\right)\, d\overline{\nu}^n- \F\left(\frac{1}{n}\sum_{j=1}^n(c^j_t)^2Ku^j_t\right)}_{=:I_\F(\nu,\mu_t)}\nonumber\\
&\quad + \underbrace{\F\left(\frac{1}{n}\sum_{j=1}^n(c^j_t)^2Ku^j_t\right)-  \F\left(\int_{\Omega_L} c^2Ku\, d\tilde{\nu}\right)}_{=:II_\F(\nu,\mu_t)}\,.
\end{align}
Then, since $ \F$ is  (twice)  Frech{\' e}t-differentiable,  and $\bar \nu^n$ is a probability measure,  it follows that
\begin{align}\label{eq:I_F}
 I_{\F}(\nu,\mu_t) =&\  \underbrace{\int_{\Omega_L^n}\Big(\nabla  \F\big(\frac{1}{n}\sum_{j=1}^n(c^j_t)^2Ku^j_t\big), \frac{1}{n}\sum_{j=1}^n \big((c^j)^2 Ku^j - (c^j_t)^2Ku^j_t\big) \Big)_Y\, d\overline{\nu}^n}_{ =:I^1_{\F}(\nu,\mu_t)}\nonumber\\
&+ \int_{\Omega_L^n}  g_I\Big(\frac{1}{n}\sum_{j=1}^n \big((c^j)^2 Ku^j - (c^j_t)^2Ku^j_t\big)\Big)\,d\overline{\nu}^n\,,
\end{align}
and
\begin{equation}\label{eq:II_F}       
 II_\F(\nu,\mu_t)  = \underbrace{\Big(\nabla  \F \big(\int_{\Omega_L} c^2 Ku\,d\tilde{\nu}\big),\frac{1}{n}\sum_{j=1}^n(c^j_t)^2Ku^j_t-\int_{\Omega_L} c^2Ku\,d\tilde{\nu}\Big)_Y}_{{=:II^1_{\F}(\nu,\mu_t)}}+  g_{II}\Big(\frac{1}{n}\sum_{j=1}^n(c^j_t)^2Ku^j_t-\int_{\Omega_L} c^2Ku\,d\tilde{\nu}\Big)\,, 
\end{equation}
 for functions $g_{I}$ and $g_{II}$ satisfying the same limiting behaviour at 0 as in \eqref{eq:g_1_2_o_t}.  Now by linearity  and \eqref{eq:notation_nu_i},
\begin{align*}
 I^1_\F(\nu,\mu_t)    &=\Big(\nabla  \F\big(\frac{1}{n}\sum_{j=1}^n(c^j_t)^2Ku^j_t\big), \int_{\Omega_L^n} \frac{1}{n} \sum_{j=1}^n \Big((c^j)^2 Ku^j - (c^j_t)^2Ku^j_t\big)\, d\overline{\nu}^n\Big)_Y \\
&= \Big(\nabla  \F\big(\frac{1}{n}\sum_{j=1}^n(c^j_t)^2Ku^j_t\big), \frac{1}{n}\sum_{j=1}^n\int_{ \Omega_L }   \big(c^2 Ku - (c_t^j)^2Ku^j_t\big)\, d\nu_j\Big)_Y
\end{align*}
and similarly,
\begin{align*}
 II^1_\F(\nu,\mu_t)=\Big(\nabla   \F\big(\int_{\Omega_L} c^2 Ku\,d\tilde{\nu}\big),\frac{1}{n}\sum_{j=1}^n\int_{\Omega_L} \big((c^j_t)^2Ku^j_t-c^2Ku\big)\,d\nu_j\Big)_Y\,.
\end{align*}
Note that by \ref{assmpt:a8}, recalling Remark \ref{rem:assumption8}, and by an application of \Cref{lem:quantitative_estimates}, it holds that
\begin{align}
\label{eq:order_of_rest_term}
\begin{aligned}
\frac{1}{n}\sum_{j=1}^n\int_{\Omega_L} \left\|(c^j_t)^2Ku^j_t-c^2Ku\right\|_Y\,d\nu_j &\leq \frac{C}{n}\sum_{j=1}^n\int_{\Omega_L} d_\Omega((c,u),(c^j_t,u^j_t))\,\,d\nu_j\\ &= C\int_{B_\delta(\c_t,\u_t)} d_n((\c,\u),(\c_t,\u_t))d\overline{\nu}^n( \c,\u)\\
&\leq C\alpha_\nu \W_2(\nu,\mu_t)\,.
\end{aligned}
\end{align}
Therefore, since $F$ is twice differentiable and by \eqref{eq:order_of_rest_term}, we obtain
\begin{align*}
 I^1_{\F}(\nu,\mu_t)+ II^1_{\F}(\nu,\mu_t)&= \Big(\nabla\F\big(\frac{1}{n}\sum_{j=1}^n(c^j_t)^2Ku^j_t\big)-\nabla\F\big(\int_{\Omega_L}c^2Ku\,d\tilde\nu\big), \frac{1}{n}\sum_{j=1}^n\int_{ \Omega_L}   \big(c^2 Ku - (c_t^j)^2Ku^j_t\big)\, d\nu_j\Big)_Y\\
& \leq\|\F\|_{C^2}\left\|\frac{1}{n}\sum_{j=1}^n(c^j_t)^2Ku^j_t-\int_{\Omega_L} c^2 Ku\,d\tilde{\nu}\right\|_Y\cdot \left\|\frac{1}{n}\sum_{j=1}^n\int_{\Omega_L} \big((c^j_t)^2Ku^j_t-c^2Ku\big)\,d\nu_j\right\|_Y \\
&=\|\F\|_{C^2} \left\|\frac{1}{n}\sum_{j=1}^n\int_{\Omega_L} \big((c^j_t)^2Ku^j_t-c^2Ku\big)\,d\nu_j\right\|_Y^2 \leq C_{\F}\alpha^2_\nu \W_2^2(\nu,\mu_t)\,.
\end{align*}
So again,  we only have to deal with the remainder terms, for which as in \eqref{eq:int_little_o_is_wasserstein_little_o}, we  claim first that 
\begin{equation}
\label{eq:g_Iint_little_o_is_wasserstein_little_o}
\lim_{\nu\stackrel{*}{\rightharpoonup}\mu_t} \frac{\left|\int_{\Omega^n_L} g_I\big(\frac{1}{n}\sum_{j=1}^n \big((c^j)^2 Ku^j - (c^j_t)^2Ku^j_t\big)\,d\overline{\nu}^n\,\right|}{\W_2(\nu,\mu_t)} =0\,.
\end{equation}
Indeed,  by definition and the continuity of $(\c,\u) \mapsto\frac{1}{n}\sum_{j=1}^n (c^j)^2 Ku^j$, for every $\varepsilon>0$  there exists a $\delta >0$ such that if $(\c,\u)\in B_\delta(\c_t,\u_t)  \subset \Omega_L^n$, then
\begin{equation*}
\Big|g_I\big(\frac{1}{n}\sum_{j=1}^n \big((c^j)^2 Ku^j - (c^j_t)^2Ku^j_t\big)\big)\Big| \leq \varepsilon \left\|\frac{1}{n}\sum_{j=1}^n \big((c^j)^2 Ku^j - (c^j_t)^2Ku^j_t\big)\right\|_Y \leq \frac{\varepsilon}{n}\sum_{j=1}^n \left\|(c^j)^2 Ku^j - (c^j_t)^2Ku^j_t)\right\|_Y\,.
\end{equation*}
Then,  using further \ref{assmpt:a8} and \eqref{eq:order_of_rest_term},
\begin{align*}
 \left|\int_{\Omega^n_L} g_I\big(\frac{1}{n}\sum_{j=1}^n \big((c^j)^2 Ku^j - (c^j_t)^2Ku^j_t\big)\,d\overline{\nu}^n\,\right| & \leq \int_{B_\delta(\c_t,\u_t)} \Big|g_I\big(\frac{1}{n}\sum_{j=1}^n \big((c^j)^2 Ku^j - (c^j_t)^2Ku^j_t\big)\Big|\,d\overline{\nu}^n\, \\
&\leq  \frac{\varepsilon}{n}\int_{B_\delta(\c_t,\u_t)}\sum_{j=1}^n \left\|(c^j)^2 Ku^j - (c^j_t)^2Ku^j_t)\right\|_Y\, d\overline{\nu}^n \\
& \leq \frac{C\varepsilon}{n}\sum_{j=1}^n  \int_{\Omega_L} d_\Omega((c,u),(c^j_t,u^j_t))\,\,d\nu_j  \leq \frac{C\varepsilon}{n}\alpha_\nu \W_2(\nu,\mu_t)\,.
\end{align*}
 Since $\lim_{\nu\stackrel{*}{\rightharpoonup} \mu_t}\alpha_\nu= 1,$ cf. \eqref{eq:gf_equal_leq1_inequality}, and $\varepsilon>0$ was arbitrary, the last estimate directly implies \eqref{eq:g_Iint_little_o_is_wasserstein_little_o}. % 
\  Lastly, we analogously show that 
\begin{equation}
\label{eq:g_II_little_o_is_wasserstein_little_o}
\lim_{\nu\stackrel{*}{\rightharpoonup}\mu_t} \frac{\left|g_{II}\big(\frac{1}{n}\sum_{j=1}^n(c^j_t)^2Ku^j_t-\int_{\Omega_L} c^2Ku\,d\tilde{\nu}\big)\,\right|}{\W_2(\nu,\mu_t)} =0\,,
\end{equation}
 which also follows by \eqref{eq:order_of_rest_term}. Indeed, again, for every $\varepsilon>0$, if $\nu$ is sufficiently close enough to $\mu_t$ in the weak$^*$-topology, then,
\begin{align*}
\left|g_{II}\big(\frac{1}{n}\sum_{j=1}^n(c^j_t)^2Ku^j_t-\int_{\Omega_L} c^2Ku\,d\tilde{\nu}\big)\,\right| &\leq  \varepsilon \left\|\frac{1}{n}\sum_{j=1}^n(c^j_t)^2Ku^j_t-\int_{\Omega_L} c^2Ku\,d\tilde{\nu}\right\|_Y \leq  C\varepsilon  \W_2(\nu,\mu_t)\,,
\end{align*}
 which again, by the arbitrariness of $\varepsilon>0$, yields \eqref{eq:g_II_little_o_is_wasserstein_little_o}, and concludes the proof of the lemma. 
\end{proof}
\end{lem}

We are now ready to prove the equality of slopes which implies that the lifting of our  AGF  is a curve of maximal slope in $\mathcal{P}_2(\Omega_L)$.
\begin{manualtheorem}{5.2}
Suppose that \ref{assmpt:a1}-\ref{assmpt:a8} hold, and let $n>1$ and $\mu_t = \frac{1}{n}\sum_{j=1}^n\delta_{(c_t^j,u_t^j)}$. For all $L>0$ % and $\Omega_L^{n} = [0,L]^n\times \B^n$, 
it holds that $|\partial \mathcal{J}|(\mu_t) = |\partial J_n|(\c_t,\u_t)$.
\end{manualtheorem}
\begin{proof}
As in the proof of \Cref{prop:gradient_equality_one_particle}, the inequality $|\partial  \mathcal{J}|(\mu_t) \geq |\partial J_n|(\c_t,\u_t)$ is a direct consequence of the definitions. For the reverse inequality, in the framework of the subsequent Subsection \ref{subsec:semi_discrete_ot}, let us choose a dual maximizer $\psi$ and thus Laguerre cells $(A_j)_{j\in \{1,\dots,n\}}$, cf. \eqref{eq:mu_atomic}--\eqref{eq:def_Sigma}. By definition of the slope in \eqref{eq:local_slope}, for every $\varepsilon >0$, there exists $\delta'>0$ such that for all $(\c,\u)\in \Omega_L^n$ with $d_n((\c_t,\u_t), (\c,\u)) \leq \delta'$, one has
\begin{equation}
\label{eq:slope_inequality}
\frac{( J_n(\c_t,\u_t) - J_n(\c,\u))^+}{d_n((\c_t,\u_t), (\c,\u))} \leq |\partial J_n|(\c_t,\u_t) + \varepsilon\,.
\end{equation}
Let then $0<\delta <\delta'$ such that $(B_\delta(c_t^j,u_t^j))_{j=1}^n$ are pairwise disjoint, as in \eqref{eq:balls_disjoint}, and
  \begin{align*}
B_\delta &:= \bigtimes_{j=1}^n B_\delta(c_t^j,u_t^j) \subset B_{\delta'}(\c_t,\u_t) \subset \Omega_L^n\,.
\end{align*}
Let now $\nu\stackrel{*}{\rightharpoonup} \mu_t$ and define $\tilde \nu$ as in \eqref{eq:nu_decomp} and $\nu_R := \nu - \tilde \nu$. 
%Next, \BBB for an \EEE arbitrary $\varepsilon>0$, fix the $\delta$ from above, and decompose
%\begin{align*}
%\nu =: \frac{1}{n} \sum_{j=1}^n \frac{\nu|_{B_\delta(c^j_t,u_t^j))}}{\nu(B_\delta(c^j_t,u_t^j))} + \BBB\nu\EEE_R\,,
%\end{align*}
%\BBB so that for $\nu\stackrel{*}{\rightharpoonup}$, the linearity of the weak$^\ast$-convergence, also implies that \[\nu_j :=\frac{\nu|_{B_\delta(c^j_t,u_t^j)}}{\nu(B_\delta(c^j_t,u_t^j))} \rightharpoonup^\ast \delta_{(c^j_t,u^j_t)}\quad \forall j\in\{1,\dots,n\},\ \text{ and } \nu_R \rightharpoonup^* 0\,.\]
Recalling also the notation in \eqref{eq:notation_nu_i}, \Cref{lem:exchange_limiting_sequence} in particular implies that,
\begin{equation}\label{eq:reducing_measure_spt}
\limsup_{\nu \stackrel{*}{\rightharpoonup}\mu_t} \frac{(J_n(\c_t,\u_t)-\mathcal{J}(\nu))^+}{\W_2(\nu,\mu_t)} = \limsup_{\nu \stackrel{*}{\rightharpoonup}\mu_t} \frac{( J_n(\c_t,\u_t)-\mathcal{J}(\tilde{\nu}))^+}{\W_2(\nu,\mu_t)},
\end{equation}
 so that it is enough to consider $\tilde{\nu}$ instead of $\nu$. By the triangle inequality, we can simply estimate 
\begin{align*}
\big| J_n(\c_t,\u_t) - \mathcal{J}(\tilde{\nu})\big| &= \left|\int_{\Omega_L^n}J_n(\c_t,\u_t)\, d\overline{\nu}^n(\c,\u) - \mathcal{J}(\tilde{\nu})\right|  \\
&\leq \left|\int_{\Omega_L^n}\big(J_n(\c_t,\u_t)-J_n(\c,\u)\big)\, d\overline{\nu}^n(\c,\u)\right| + \left|\int_{\Omega_L^n}J_n(\c,\u)\, d\overline{\nu}^n(\c,\u)-\mathcal{J}(\tilde{\nu})\right|\\
&= \Big|\int_{B_\delta(\c_t,\u_t)} \big(J_n(\c_t,\u_t) - J_n(\c,\u)\big) \,d\overline{\nu}^n(\c,\u)\Big| + \left|\int_{\Omega_L^n}J_n(\c,\u)\, d\overline{\nu}^n(\c,\u)-\mathcal{J}(\tilde{\nu})\right|\,.
\end{align*}
  
By \Cref{lem:exchange_f_with_int_jn} this time, it suffices to consider the first term in the right hand side of the above line for the slopes. Then, combining  \eqref{eq:slope_inequality}, \eqref{eq:reducing_measure_spt}, \eqref{eq:J_n_J_neglig}, and \Cref{lem:quantitative_estimates}, gives 
\begin{align*}
|\partial \mathcal{J}|(\mu_t) &=\limsup_{\nu \stackrel{*}{\rightharpoonup}\mu_t}\frac{(\mathcal{J}(\mu_t)-\mathcal{J}(\nu))^+}{\W_2(\nu,\mu_t)} \leq \limsup_{\nu \stackrel{*}{\rightharpoonup} \mu_t}\frac{\int_{B_\delta(\c_t,\u_t)} (J_n(\c_t,\u_t) - J_n(\c,\u))^+ \,d\overline{\nu}^n(\c,\u)}{\W_2(\nu,\mu_t)} \\[3pt]
\leq&\, \limsup_{\nu \stackrel{*}{\rightharpoonup} \mu_t}\left(|\partial J_n|(\c_t,\u_t) + \varepsilon\right) \int_{B_\delta(\c_t,\u_t)} \frac{d_n((\c_t,\u_t),(\c,\u))}{\W_2(\nu,\mu_t)}\, d\overline{\nu}^n(\c,\u)\\
&\leq \left(|\partial J_n|(\c_t,\u_t) + \varepsilon\right)\limsup_{\nu \stackrel{*}{\rightharpoonup}\mu_t} \alpha_\nu=|\partial J_n|(\c_t,\u_t) + \varepsilon\,,
\end{align*}
and as this holds for all $\varepsilon >0$, also the inequality $|\partial \mathcal{J}|(\mu_t) \leq |\partial J_n|(\c_t,\u_t)$ follows.
\end{proof}

\subsection{Semi-Discrete Optimal Transport on compact metric spaces}
\label{subsec:semi_discrete_ot}
Here, we recall some standard facts about semi-discrete optimal transport that can be found in  \cite{Merigot_Semi_Discrete} and in the classical treatise \cite{Aurenhammer1998}, or more recently in \cite{DieciOmarov}. Afterwards, we use them to generalize the necessary statements to our setting of a general compact metric space. \\
Let $(X,d)$ be a metric space, $\nu \in \mathcal{P}_2(X)$ and  consider an atomic measure 
\begin{equation}\label{eq:mu_atomic}
\mu := \sum_{j=1}^n\alpha_j\delta_{y^j}\,, \text{ with } \alpha_j>0\,, \ \sum_{j=1}^n \alpha_j =1, \text{ and } y^i\neq y^j \ \text{ for }i\neq j\,.
\end{equation}
Set also $Y := \{y^1,\dots,y^n\}$ and for every bounded Lipschitz function $\psi\colon X\to \R$, let us set  $\psi^j := \psi(y^j)$. Then, the  Kantorovich  duality gives 
\begin{align}\label{eq:Kantorovich}
\W_2^2(\nu,\mu)= \sup_{\phi(x)+\psi(y)\leq d^2(x,y)}\, \int_X \phi(x)\, d\nu(x) + \sum_{j=1}^n \alpha_j\psi^j.
\end{align}
Thus, one can consider the above  maximization  problem to be posed between $X$ and the discrete space $Y$. Then by, \textit{e.g.} \cite[Theorem 5.10]{villiani_old_new}, the supremum above is attained at a couple $(\psi^c,\psi)$, where
\begin{align}\label{eq:psi_c}
&\psi^c(x) := \min_{ j\in\{1,\dots,n\}} ( d^2(x,y^j) - \psi^j)\,. 
\end{align}
 Hence, 
\begin{align}\label{eq:Wasser_discr_dual}
\begin{split}
\W_2^2(\nu,\mu) = \sup_{(\psi^1,\ldots,\psi^n) \in \R^n}\int_X \psi^c(x)\, d\nu(x) + \sum_{j=1}^n \alpha_j\psi^j\,.
\end{split}
\end{align}
This duality motivates defining the \emph{Laguerre cells} $A_j$ and the 
\emph{tie sets} $\Sigma_j$  as follows.
\begin{defin}[\textit{Laguerre cells}]
 Let $(X,d)$ be a metric space, $\nu \in \mathcal{P}_2(\Omega_L)$, $\mu$ as in \eqref{eq:mu_atomic}  and $\psi \in \R^n$ be a dual maximizer  of \eqref{eq:Wasser_discr_dual}. For every $j\in\{1,\dots,n\}$, one defines the \emph{Laguerre cell} $A_j$ and \emph{tie sets} or also \emph{boundaries of the Laguerre cells} $\Sigma_j$ as
\begin{align}\label{eq:Laguerre_and_tie}
\begin{split}
\mathrm{(i)}&\quad A_j:= \{x\in X: d^2(x,y^j) - \psi^j \leq d^2(x,y^i) - \psi^i\text{ for } i\neq j\}\,,\\[2pt]
\mathrm{(ii)}&\quad  \Sigma_j := \bigcup_{i \neq j} (A_j\cap A_i)\,.
\end{split}
\end{align}
Define also the \emph{interior of the Laguerre cells} as $A_j\setminus\Sigma_j$, and the \emph{tie set} $\Sigma$ as
\begin{equation}\label{eq:def_Sigma}
\Sigma := \bigcup_{j=1}^n \Sigma_j\,.
\end{equation}
\end{defin}
\noindent A simple statement about the interior of the Laguerre cells follows directly from the definition.
\begin{lem}
\label{lem:laguerre_interior}
 For every $j\in\{1,\dots,n\}$,  it holds that 
\begin{align*}
A_j \setminus \Sigma_j = \{x\in X: d^2(x,y^j) - \psi^j < d^2(x,y^i) - \psi^i \text{ for } i\neq j\}\,.
\end{align*}
\end{lem}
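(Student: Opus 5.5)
This is a set-theoretic identity, so the plan is to prove both inclusions directly from the definitions in \eqref{eq:Laguerre_and_tie}. Throughout we use the shorthand $f_j(x) := d^2(x,y^j) - \psi^j$. With this notation $A_j=\{x : f_j(x)\leq f_i(x)\ \forall i\neq j\}$, and $\Sigma_j$ consists of those $x\in A_j$ for which there is some $i\neq j$ with $x\in A_i$.

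\emph{Inclusion ``$\supseteq$''.} Let $x$ satisfy $f_j(x)<f_i(x)$ for all $i\neq j$. In particular $f_j(x)\leq f_i(x)$ for all $i\neq j$, so $x\in A_j$. Suppose $x\in A_i$ for some $i\neq j$. Then $f_i(x)\leq f_k(x)$ for every $k\neq i$, and taking $k=j$ gives $f_i(x)\leq f_j(x)$. This contradicts the strict inequality $f_j(x)<f_i(x)$. Hence $x\notin A_i\cap A_j$ for every $i\neq j$, that is, $x\notin\Sigma_j$.

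\emph{Inclusion ``$\subseteq$''.} Let $x\in A_j\setminus\Sigma_j$, and suppose, for contradiction, that $f_j(x)=f_i(x)$ for some $i\neq j$. We claim $x\in A_i$. For any $k\neq i$: if $k=j$, then $f_i(x)=f_j(x)$; if $k\neq i,j$, then $f_i(x)=f_j(x)\leq f_k(x)$ because $x\in A_j$. So $f_i(x)\leq f_k(x)$ for all $k\neq i$, which means $x\in A_i$. Then $x\in A_i\cap A_j\subseteq\Sigma_j$, a contradiction. Therefore $f_j(x)\neq f_i(x)$ for all $i\neq j$. Combined with $f_j(x)\leq f_i(x)$ from $x\in A_j$, this gives the strict inequality $f_j(x)<f_i(x)$ for all $i\neq j$.

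The only step needing a little care is the transitivity argument showing that a tie $f_j(x)=f_i(x)$, together with $x\in A_j$, already forces $x\in A_i$. Everything else is immediate from the definitions. No properties of the metric or of the dual maximizer $\psi$ are used.
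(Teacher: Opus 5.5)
Your proof is correct and is the same direct verification from the definitions of $A_j$ and $\Sigma_j$ that the paper has in mind; the paper only remarks that the lemma ``follows directly from the definition'' and does not write the argument out. Your handling of the one nontrivial step is sound: you show that a tie $f_j(x)=f_i(x)$ together with $x\in A_j$ already forces $x\in A_i$, and hence $x\in\Sigma_j$.
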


%\BBB In view of \eqref{eq:Wasser_discr_dual} and \Cref{lem:laguerre_interior}, \EEE we may therefore, for a choice of dual maximizer $\psi$, write
%\begin{align*}
%\W_2^2(\nu,\mu)=&\int_X \left[\min_{\BBB j\in\{1,\dots,n\}\EEE} d^2(x,y^j) - \psi^j\right]\, d\nu(x) + \sum_{j=1}^n \alpha_j\psi^j\\  =&\,  \sum_{j=1}^n \int_{A_j\setminus\Sigma_j} \BBB\big(\EEE d^2(x,y^j) - \psi^j\BBB\big)\EEE\,d\nu(x) + \sum_{j=1}^n\int_{\tilde{\Sigma}_j}\BBB(\EEE d^2(x,y^j) - \psi^j\BBB)\EEE\,d\nu(x)+ \sum_{j=1}^n \alpha_j\psi^j\,.
%\end{align*}
%for $\tilde{\Sigma}_1 := \Sigma_1$, %and \BBB for $j\geq 2$, \EEE $\tilde{\Sigma}_j := \Sigma_j \setminus \cup_{\ell=1}^{j-1} \Sigma_\ell$. 
\noindent Thus, for every $i\in\{1,\dots,n\}$ and $x\in A_i$, it holds that
\begin{equation}
\psi^c(x) = \min_{j\in\{1,\dots,n\}} (d^2(x,y^j) - \psi^j) =d^2(x,y^i) - \psi^i\,,
\end{equation}
and in the interior, if $x\in A_i\setminus \Sigma_i$, by Lemma \ref{lem:laguerre_interior},
\begin{equation}
\label{eq:strict_inequality_outside_of_tie_set}
\psi^c(x) < d^2(x,y^j) - \psi^j \quad \text{ for all } j\neq i\,.
\end{equation}
This can be combined with known facts on Kantorovitch duality to derive the desired properties.

\begin{lem}[\textit{Properties of semi-discrete optimal couplings}]\label{lem:sem}
\label{lem:properties_semi_discrete_coupling}
In the above setting,  consider an optimal coupling $\pi \in\Gamma_0(\nu,\mu)$. Then in fact $\pi \in \mathcal{P}_2(X\times Y)$, and moreover, for a choice of dual optimizer $\psi\in \R^n$ and Laguerre cells $(A_j)_{j\in\{1,\dots,n\}}$, it holds that
\begin{align} 
\label{eq:support_semi_discrete_coupling}
\begin{aligned}
\mathrm{(i)}&\quad \mathrm{spt}\,\pi \cap (A_j\times Y) \subset A_j\times \{y^j\}, \\
\mathrm{(ii)}&\quad \mathrm{spt}\,\pi \cap (A_j^c\times Y)\subset A_j^c\times (Y\setminus\{y^j\})\,,
\end{aligned}
\end{align}
and
\begin{equation*}
\alpha_j= \pi(A_j \times \{y^j\})\,.
\end{equation*}
\begin{proof}
First of all, note that cylindrical sets %of the type $A\times B\subset X\times X$ 
are generators of the Borel $\sigma-$algebra in $X\times X$, and for any such $A\times B\subset X\times X$, with $B \subset Y^c$, by \eqref{eq:Gamma} and \eqref{eq:mu_atomic},  one has
\begin{equation*}
0\leq \pi(A\times B)\leq \pi( X%\Omega_L
 \times B)= \mu(B) = 0\,.
\end{equation*}
Therefore, it is immediate that %also 
$\mathrm{spt}\,\pi \subset X\times Y$, and $\pi \in \mathcal{P}_2( X\times Y)$. For the second part, \cite[Theorem 5.10]{villiani_old_new} implies that
\begin{equation}
\label{eq:duality_support_property}
\mathrm{spt}\pi \subset \{(x,y)\in X \times Y: \psi^c(x) + \psi(y) = d^2(x,y)\}\,.
\end{equation}
Now, by definition of the Laguerre cells, for $j\in\{1,\dots,n\}$ and $x\in A_j^c$, there must exist an $i_0 \neq j$ such that
\begin{equation*}
d^2(x,y^j) - \psi^j > d^2(x,y^{i_0}) - \psi^{i_0}\,.
\end{equation*}
But then, by \eqref{eq:strict_inequality_outside_of_tie_set}, also 
\begin{equation*}
\psi^c(x) %= \min_{j=1,\dots,n} d^2(x,y_j) - \psi_j 
< d^2(x,y^j) - \psi^j \implies \psi^c(x) + \psi(y^j) < d^2(x,y^j) \quad \forall (x,y^j) \in A^c_j\times \{y^j\}\,.
\end{equation*}
Therefore, by \eqref{eq:duality_support_property}, $\pi(A^c_j\times \{y^j\}) =0$, which implies \eqref{eq:support_semi_discrete_coupling},  and moreover, by \eqref{eq:mu_atomic} we have
\begin{equation*}
\alpha_j = \mu(\{y^j\}) = \pi( X \times \{y^j\}) = \pi(A_j \times \{y^j\}) + \pi(A_j^c \times \{y^j\}) = \pi(A_j \times \{y^j\})\,,
\end{equation*}
which concludes the proof.
\end{proof}

\end{lem}
In the next lemma we discuss properties of optimal couplings when restricted on the interior of the Laguerre cells.

\begin{lem}
\label{lem:crucial_integral_equality}
For $\pi \in \Gamma_0(\nu,\mu)$,  $j\in \{1,\dots,n\}$, and $L_j:= (A_j\setminus \Sigma_j) \times X$, it holds that
\begin{equation*}
\pi|_{L_j} = \nu|_{(A_j \setminus \Sigma_j)} \otimes \delta_{y^j},
\end{equation*}
so that
\begin{equation*}
\int_{L_j} d^2(x,y) \, d\pi(x,y) = \int_{A_j\setminus\Sigma_j} d^2(x,y^j)\, d\nu(x)\,.
\end{equation*}
\begin{proof}
For every $j\in\{1,\dots,n\}$, one has by Lemma \ref{lem:sem} that
\begin{align}
\label{eq:support_cap_Li}
L_j \cap \mathrm{spt}\,\pi \subset (A_j\setminus \Sigma_j) \times \{y^j\}\,.
\end{align}
By disintegrating $\pi|_{L_j}$ with respect to $\nu|_{(A_j \setminus \Sigma_j)}$, we have that 
\[\pi|_{L_j} =  \nu|_{(A_j \setminus \Sigma_j)}\otimes \mu_x\,,\] where $x\in A_j\setminus \Sigma_j$. From \eqref{eq:support_cap_Li} it follows that 
\begin{equation}\label{eq:spt_mu_x}
{\rm spt}\, \mu_x \subset \{y^j\}\ \mathrm{for}\ \nu \text{-a.e.}\ x\in A_j \setminus \Sigma_j\,.
\end{equation}
Note now that for every Borel subset $D\subset A_j\setminus \Sigma_j$, we have 
\begin{align*}
\nu( D%A_j \setminus \Sigma_j
) = \pi(D\times X%L_j
) =  \int_{D%A_j \setminus \Sigma_j
} \mu_x(X)\, d\nu(x)\,, 
\end{align*}
 implying that $\mu_x$ is a probability measure for $\nu$-a.e. $x$ in $A_j \setminus \Sigma_j$. 
 %\chris{For this implication I think one needs to give more reasoning- K: Localization of the above equality should be enough? M: Agreed, it is more clear with the localization} 
 Therefore, in view also of \eqref{eq:spt_mu_x}, it holds that $\mu_x = \delta_{y_j}$ concluding the proof. \qedhere

%\begin{align*}
%\int_{L_j} d^2(x,y) \, d\pi(x,y) = \int_{(A_j\setminus\Sigma_j)\times \{y^j\}} d^2(x,y)\, d\pi(x,y)=\int_{A_j\setminus\Sigma_j} \int_{\{y^j\}} d^2(x,y)\, d\pi_x(y) \, d\nu(x).
%\end{align*}
%Now as $\mathrm{spt}\,\pi \cap L_j = (A_j\setminus\Sigma_j)\times \{y^j\}$, for $\nu-$a.e. $x\in A_j\setminus\Sigma_j$, it holds that $\pi_x = \delta_{y^j}$: \\
%\BBB Indeed, it is immediate in this case that \EEE $\mathrm{spt}\pi_x \subset Y$, and since \EEE $\pi_x$ is also a probability measure, %and $\mathrm{spt}\,\pi \cap L_j = (A_j\setminus \Sigma_j) \times \{y^j\}$, for $\nu$-a.e. $x\in A_j\setminus \Sigma_j$,
%\begin{align*}
%1 = \pi_x(X) = \pi_x(Y) = \pi_x(\{y^j\})\,.
%\end{align*}
%Thus, $\pi|_{(A_j\setminus \Sigma_j) \times \BBB X\EEE} = \nu \otimes \delta_{y^j}$, and
%\begin{align*}
%\int_{A_j\setminus\Sigma_j} \int_{\{y^j\}} d^2(x,y)\, d\pi_x(y) \, d\nu(x) = \int_{A_j\setminus\Sigma_j} d^2(x,y^j) d\nu(x)\,,
%\end{align*}
%\BBB which concludes the proof. \EEE
\end{proof}
\end{lem}
Thanks to the previous lemma we can give a structural statement about the Wasserstein distance between $\nu$ and $\mu$ using the semi-discrete theory,  for which we also recall the notation in \eqref{eq:def_Sigma}.

%\chris{We do not really refer to the proposition or use it as far as I can see, but I would still not delete it. Maybe I can write a short sentence or two afterwards connecting it with the proofs beforehand \textcolor{red}{Ok!}}
\begin{prop}
\label{prop:structural_semi_discrete_wasserstein}
Let $(X,d)$ be a metric space, $\mu$ as in \eqref{eq:mu_atomic}, $\nu \in \mathcal{P}_2( X)$ and $\psi \in \R^n$ a corresponding dual maximizer. For an optimal coupling $\pi \in \Gamma_0(\nu,\mu)$, one has
\begin{equation*}
\W_2^2(\nu,\mu) = \sum_{j=1}^n\int_{A_j\setminus\Sigma_j} d^2(x,y^j)\, d\nu(x) + \int_{\Sigma\times Y} d^2(x,y)\, d\pi(x,y)\,.
\end{equation*}
\begin{proof}
 The statement follows from \Cref{lem:crucial_integral_equality}, since, recalling \eqref{eq:def_Sigma},
  \begin{align*}
\W_2^2(\nu, \mu) &= \int_{X\times Y} d^2(x,y)\, d\pi(x,y)= \sum_{j=1}^n\int_{(A_j\setminus\Sigma_j)\times Y} d^2(x,y)\, d\pi(x,y) + \int_{\Sigma\times Y} d^2(x,y)\, d\pi(x,y) \\
&= \sum_{j=1}^n\int_{A_j\setminus\Sigma_j} d^2(x,y^j)\, d\nu(x) + \int_{\Sigma\times Y} d^2(x,y)\, d\pi(x,y)\,.
\end{align*}
\end{proof}
\end{prop}

\section{Examples}
\label{sec:examples}
In this section we consider concrete examples that fall into  our general framework.

\medskip

\noindent \textbf{Finite dimensional settings.} As a preliminary observation, we note that on $\R^n$, the sublevel sets of $\mathcal{R}(u) := \|u\|_1$ and $\mathcal{R}(u) := \|u\|_{\infty}$
possess isolated extremal points. Consequently, the only geodesically connected admissible sets $\mathcal{B}$ reduce to singletons, and thus AGFs are not meaningful in this setting. In contrast, for $ \mathcal{R}(u) := \|u\|_p$ with $1 < p < \infty$,
the set of extremal points coincides with the boundary of the $p$-balls. In this case, AGFs are essentially equivalent to Riemannian gradient flows on $\partial \{ u : \|u\|_p \leq 1 \}$.

\medskip

\noindent \textbf{Strictly convex Banach spaces.} In case $\mathcal{M}$ is a strictly convex Banach space and $\mathcal{R}(u) := \|u\|_{\mathcal{M}}$, it is immediate to check that the extremal points of the unit ball of $\mathcal{R}$ coincide with $\partial \{ u : \|u\|_{\mathcal{M}} \leq 1 \}$. Therefore, if $\mathcal{M}$ admits a separable predual, then AGFs are metric gradient flows in the weak*-closure of $\partial \{ u : \|u\|_{\mathcal{M}} \leq 1 \}$ endowed with any metric metrizing the weak*-topology.

\subsection{Total variation regularization in the space of measures}
\label{subsec:tv_regularization}
As mentioned in the introduction, AGFs applied to optimization problems in the space of measures regularized with the total variation  penalization  recover the PGF-setting analyzed in \cite{chizat2018global, C22}. Let $X \subset \R^d$ be a convex, compact set and set
$\C:=C(X)$. In this case $\M$ is the space of finite signed Radon measures on $X$. Choosing as regularizer $\RR:=\|\cdot\|_{TV}$, one can prove that $
\tilde{\B}=\{\pm\delta_x:\ x\in X\}$.
As weakly*-closed, geodesically connected subset of $\tilde{\mathcal{B}}$ one can choose for example
\begin{align*}
\B_+:=\{\delta_x:\ x\in X\} \quad \text{or} \quad \B_-:=\{-\delta_x:\ x\in X\}\,.
\end{align*} 
   \noindent
  Note that the Wasserstein distance $\W_2$ metrizes the narrow convergence on $\B_\pm$. Moreover, the spaces \((\B_\pm,\W_2)\) are flat and in any case isometric to $(\B_+,\W_2)$. 
  %\begin{prop}
  %     $(\B_\pm,\W_2)$ is NPC, and in fact equality holds in \eqref{eq:convexity_of_dist_along_curve}.
  %      \begin{proof}
  %   For $x,y\in\R^n$ one has $\W_2(\delta_x,\delta_y)=|x-y|$, and the $\W_2$–geodesic between $\delta_x$ and $\delta_y$ is
%$t\mapsto \delta_{(1-t)x+ty}$. Hence, $(\R^n,|\cdot|)$ is isometric to $(\B_+,\W_2)$ via $x\mapsto\delta_x$, so $(\B_+,\W_2)$ is NPC and the inequality in  \eqref{eq:convexity_of_dist_along_curve} is an equality along these geodesics.
%For $\B_-$, define $d_{\B_-}(-\delta_x,-\delta_y):=\W_2(\delta_x,\delta_y)$. Then, $x\mapsto -\delta_x$ is an isometry, giving the same conclusion.
%        \end{proof}
%    \end{prop}
%    \noindent
As a consequence, uniqueness for the minimizing movement scheme and contraction estimates follow from the application of  \Cref{thm: uniqueness}.\\
   % \begin{rem}
   %         This example shows that the metric structure of the set of extreme points $\B$ of the regularizer $\mathcal{R}$ determines the properties of the gradient flow. In the Dirac case, $(\B_\pm,\W_2)$ is non-compact and isometric to $\R^n$, yielding a flat NPC geometry with euclidean geodesics, whereas $(\B,d_\B)$ (with $d_\B$ inducing the weak$^*$ topology) is compact and more delicate to analyze.
   %     This shows that there are topological subtleties at play for $\B$, but this is to be expected considering the generality of our setting. However, here restricting to a bounded convex domain $\Omega_L\subset\R^n$ restores compactness on the Wasserstein side, with the decomposition into the sign components $\B_\pm$ being the same as considering geodesically complete subsets of $\tilde{\B}$ in \Cref{def:omega_n}
   % \end{rem}
\noindent
Now, we can concretely examine how the AGF  framework defined in  \Cref{sec:AGFminmov} looks like in this particular scenario for the choice of $\mathcal{B}_+$,  which can be repeated verbatim for $\mathcal{B}_-$.
The discretized functional $J_n$ can be written as 
\begin{align}
J_n\big(((c^j),(\delta_{x^j}))^n_{j=1}\big)= \F\!\left(\frac{1}{n}\sum_{j=1}^n (c^j)^{2}\,K\delta_{x^j}\right)+\frac{1}{n}\sum_{j=1}^n (c^j)^{2}\,,
\end{align}
recovering the discrete problem in \cite[Equation (2)]{chizat2018global}.
Moreover, since $\mathcal{B}_+$ is isometric to $X$, the  AGF  of  $J_n$, defined as metric gradient flow in $\Omega_L^n$, is equivalent to the finite dimensional gradient flow of the functional $((c^j),(\delta_{x^j}))^n_{j=1} \mapsto J_n\big(((c^j),(\delta_{x^j}))^n_{j=1}\big)$, which is precisely how PGFs are defined.
The lifted functional defined as in \eqref{eq:repr_I}, for $\nu \in \mathcal{P}(\Omega_L)$, is written as 
\begin{align}\label{eq:liftex-1}
 \mathcal{J}(\nu) = \mathcal{F} \left(\int_{\Omega_L} c^2 K\delta_x\, d\nu(c,\delta_x)\right) + \int_{\Omega_L} c^2 d\nu(c,\delta_x)\,.
\end{align}
Note that $\mathcal{P}([0,L] \times \mathcal{B}_+)$ is isometric to $\mathcal{P}([0,L] \times X)$ through the maps $T(\nu) := \mathcal{T}_\# \nu$, where 
\[ \mathcal{T}(c,\delta_x) := (c,x)\,.\]
Indeed, $T$ is an isometry since, due to the fact that $\mathcal{T}$ is a bijective isometry, we have that
\begin{align*}
\mathcal{W}_2(T(\nu_1),T(\nu_2)) = \mathcal{W}_2(\mathcal{T}_\#\nu_1, \mathcal{T}_\#\nu_2) = \mathcal{W}_2(\nu_1, \nu_2)\,.
\end{align*}
Therefore, the metric gradient flow of the lifted functional \eqref{eq:liftex-1} is equivalent to a Wasserstein gradient flow of 
\begin{align}
\nu \mapsto \mathcal{F} \left(\int_{\Omega_L} c^2 K \delta_x \,d\nu(c,x) \right) + \int_{\Omega_L} c^2\, d\nu(c,x)\,,
\end{align}
i.e. precisely to the lifted dynamics of PGFs.

%giving the explicit form of \(J_n\) for Diracs and its connection to the lifting. We recall that 
%\begin{align}
%J_n(c,u)=J\left(\tfrac{1}{n}\sum_{j=1}^n c_j^{\,2}u_j\right),\qquad
%(c,u)\in\Omega_L^n=[0,L]^n\times\B^{\,n},
%\end{align}
%as in \eqref{eq:discrete_version_of_energy}.  Here, $\B\subset\B$ is closed and geodesically connected, and $c_j\in[0,L]$ by construction. Under the no loss of mass assumption on $\B$,
%\begin{align}
%R\!\left(\sum_{j=1}^n \alpha_j u_j\right)=\sum_{j=1}^n \alpha_j R(u_j)=\sum_{j=1}^n \alpha_j
%\quad\text{for }\alpha_j\geq0,\ u_j\in\B.
%\end{align}
%In particular, for $R=\|\cdot\|_{TV}$ and Diracs $u_j=\delta_{x_j}$, this identity holds whenever
%$\B$ is contained in a single sign component $\B_\pm$ (it may fail if $\B$ meets both signs due to cancellations). Hence, we get
%\begin{align}
%J_n\big((c_j),(x_j)\big)=F\!\left(\tfrac1n\sum_{j=1}^n c_j^{\,2}\,K\delta_{x_j}\right)+\tfrac1n\sum_{j=1}^n c_j^{\,2}.
%\end{align}
    
\subsection{One-dimensional BV functions}
Following \cite{carioni2023general} (see also \cite{trautmann2024fast}), we consider  $\mathcal{M}:=L^{\infty}((0,1))$ and $Y:=L^2((0,1))$. We recall that $L^{\infty}((0,1))$ is a Banach space, whose predual is $\mathcal{C}:=L^1((0,1))$, which is a separable space. To enforce zero boundary conditions, for $0<\varepsilon<\frac{1}{2}$ we define the set
\begin{equation*}
\mathcal{D}_{ \varepsilon} :=\left\{u \in L^{\infty}((0,1)): u(x)=0 \text { for a.e. } x \in(0, \varepsilon) \cup(1-\varepsilon, 1)\right\}\,.
\end{equation*}
We denote by
\begin{align*}
|D u|((0,1)):=\sup \left\{\int_0^1 u(x) \operatorname{div} \varphi(x) \,d x: \varphi \in C_c^1((0,1)),\ \|\varphi\|_{\infty} \leqslant 1\right\}  
\end{align*}
the $BV$-seminorm of  a function $u \in L^\infty ((0,1))$, and we choose the regularizer 
\begin{equation*}
\mathcal{R}(u):= \begin{cases}|D u|((0,1)) & \text { if } u \in B V((0,1)) \cap \mathcal{D_{\varepsilon}}\,, \\ +\infty & \text { otherwise\,. }\end{cases}
\end{equation*}
It is straightforward that $\mathcal{R}$ satisfies  \ref{assmpt:a4} and \ref{assmpt:a5}  in Section \ref{sec:setting_prelim}. By \cite[Proposition 6.11]{carioni2023general} one can also characterize the weak*-closure of the extremal points set as
\begin{equation}
 \tilde{\mathcal{B}}=\left\{\sigma \frac{1}{2} \mathds{1}_{[a, b]}: a, b \in[\varepsilon, 1-\varepsilon],\ a \leq b \text { and } \sigma \in\{-1,1\}\right\},
\end{equation}
where $\mathds{1}_A(t)$ denotes the indicator function of the set $A$. As weakly*-closed, geodesically connected subset of $\tilde{\mathcal{B}}$, we can choose either 
\begin{align*}
\mathcal{B}_+ = \left\{\frac{1}{2} \mathds{1}_{[a, b]}: a, b \in[\varepsilon, 1-\varepsilon], a \leq b \right\}\quad \text{or} \quad \mathcal{B}_- = \left\{-\frac{1}{2} \mathds{1}_{[a, b]}: a, b \in[\varepsilon, 1-\varepsilon], a \leq b\right\}.
\end{align*}
In $\mathcal{B}_{\pm }$, the weak*-convergence is equivalent to the convergence of the endpoints of the indicator function,  \textit{i.e.}, $\frac{1}{2} \sigma  \mathds{1}_{\left[a_k, b_k\right]} \stackrel{*}{\rightharpoonup} \frac{1}{2} \sigma \mathds{1}_{[a, b]}$ for $\sigma \in \{-1,1\}$, if and only if $a_k \rightarrow a$ and $b_k \rightarrow b$ as $k\to \infty$. In particular, by choosing 
\begin{align*}
d_\B\left(\frac{1}{2} \sigma \mathds{1}_{[a, b]}, \frac{1}{2} \sigma \mathds{1}_{[a', b']}\right) := \sqrt{|a-a'|^2+|b-b'|^2}\,,
\end{align*}
$d_\B$ metrizes the weak*-convergence in $\mathcal{B}_{\pm}$. With this choice of the metric, both $\mathcal{B}_+$ and  $\mathcal{B}_-$ are isometric to \[Q_\varepsilon :=\{(a,b) \in [(0,\varepsilon) \cup (1-\varepsilon, 1)]^2 : a \leq b\} \subset \R^2\,,\] and thus are flat and NPC. As a consequence, uniqueness for the AGF and the contraction estimates follow again from Theorem \ref{thm: uniqueness}. The discretized functional $J_n$ can be written via 
\begin{align}
J_n\big(((c^j),(a_j,b_j))_{j=1}^n\big)= \mathcal{F}\!\left(\frac{1}{2n}\sum_{j=1}^n (c^j)^{2}\,K\mathds{1}_{[a_j, b_j]}\right)+\frac{1}{n}\sum_{j=1}^n (c^j)^{2}\,, 
\end{align}
and the lifted functional is defined as in \eqref{eq:repr_I}, for $\nu \in \mathcal{P}(\Omega_L) = \mathcal{P}([0,L] \times \mathcal{B}_+)$, via  
\begin{align}\label{eq:liftex}
 \mathcal{J}(\nu) := \mathcal{F} \left(\int_{\Omega_L} c^2 K\mathds{1}_{[a, b]}\, d\nu(c,\mathds{1}_{[a, b]})\right) + \int_{\Omega_L} c^2\, d\nu(c,\mathds{1}_{[a, b]})\,.
\end{align}
Reasoning similarly to Subsection \ref{subsec:tv_regularization}, the space $\mathcal{P}([0,L] \times \mathcal{B}_+)$ is isometric $\mathcal{P}([0,L] \times Q_\varepsilon)$. Therefore, the gradient flow of the lifted functional is equivalent to a Wasserstein gradient flow in $\mathcal{P}([0,L] \times Q_\varepsilon)$ of 
\begin{align}
\nu \mapsto \mathcal{F} \left(\int_{[0,L] \times Q_\varepsilon} c^2 K \mathds{1}_{[a, b]} \,d\nu(c,(a,b)) \right) + \int_{[0,L] \times Q_\varepsilon} c^2 d\nu(c,(a,b))\,.
\end{align}

\subsection{Benamou-Brenier dynamical formulation of optimal transport}
  
Introduced in  \cite{benamou2000computational}, the \textit{Benamou-Brenier formula} allows to compute an optimal transport between two probability measures $\rho_0$ and $\rho_1$ on a closed, bounded domain $X \subset \mathbb{R}^d$ through the minimization of the \emph{kinetic energy}
\begin{equation}\label{eq:Kinetic}
(\rho,v) \mapsto \frac{1}{2} \int_0^1\int_X \left|v_t(x) \right|^2 \mathrm{d}\rho_t(x)\, dt\,,    
\end{equation}  
where $\rho_t \in M^{+}(X)$ is a time-dependent probability measure interpolating between $\rho_0$ and $\rho_1$, and $v_t: [0,1]\times X \rightarrow \mathbb{R}^d$ is a vector field, and the pair $\left(\rho_t, v_t\right)$ satisfies the \emph{continuity equation} 
\[\partial_t \rho +\operatorname{div}\left(\rho v_t\right)=0\] in the sense of distributions. 
One can reformulate the Benamou-Brenier energy as a convex functional on the space of Borel measures $\mathcal{M}:=M^{+}([0,1]\times X) \times M\left([0,1]\times X; \mathbb{R}^d\right)$  via
\begin{equation}\label{Convex_kinetic}
\mathbf{B}(\rho, m):=\begin{cases}\frac{1}{2} \int_0^1 \int_{X}\left|\frac{d m}{d \rho}\right|^2d \rho(t, x)\,, \ \text{if } \rho \geq 0, m \ll \rho\\[3pt]
+ \infty\,,\qquad \qquad \qquad \quad  \text{ otherwise}\,.
\end{cases}
\end{equation}
 In this case, the continuity equation becomes the linear constraint $\partial_t \rho+\operatorname{div} m=0$. In \cite{schmitzer2019dynamic, bredies2023generalized, bredies2020optimal} the Benamou-Brenier energy was used as a convex regularizer to solve dynamic inverse problems, and its sparsity properties were subsequently investigated \cite{bredies2021extremal, bredies2023generalized, duval2024dynamical, carioni2025sparsity}. This was done by
considering the functional
\begin{equation}
 \mathcal{R}_{\alpha, \beta}(\rho, m):=\beta \mathbf{B}(\rho, m)+\alpha\|\rho\|_{TV %M(\Omega)
}\,, \quad \text { subjected to } \quad \partial_t \rho+\operatorname{div} m=0
\end{equation}
defined for all $(\rho, m) \in \mathcal{M}$ and $\alpha > 0, \beta>0$. 
Here, AGFs apply directly to composite optimization problems regularized with $\mathcal{R} := \mathcal{R}_{\alpha,\beta}$ since one can show that $\mathcal{R}_{\alpha,\beta}$ satisfies \ref{assmpt:a4}-\ref{assmpt:a5}.
Moreover, as proved in \cite{bredies2021extremal}, it is possible to characterize the extremal points of 
\[B_{\alpha, \beta}:=\left\{(\rho, m) \in \mathcal{M}:  \mathcal{R}_{\alpha, \beta}(\rho, m) \leqslant 1, \text{subjected to } \partial_t \rho+\operatorname{div} m=0\right\}.\] 
\begin{thm}[Theorem 6, \cite{bredies2021extremal}]
Let $\alpha > 0, \beta>0$. It holds that 
\begin{align}
{\rm Ext}(B_{\alpha,\beta}) = \{(\rho,m) : \rho=C_\gamma\, d t \otimes \delta_{\gamma(t)}, \quad m=\dot{\gamma}(t) C_\gamma\, d t \otimes \delta_{\gamma(t)}\} \cup \{(0,0)\}\,,
\end{align}
where $\gamma:[0,1] \rightarrow X$ is a curve in $H^1([0,1]; X)$ and $C_\gamma:=\left(\frac{\beta}{2} \int_0^1|\dot{\gamma}(t)|^2 d t+\alpha\right)^{-1}$.
\end{thm}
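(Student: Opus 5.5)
The plan is to prove the two inclusions separately. First, every element of the form $(\rho_\gamma,m_\gamma):=(C_\gamma\,dt\otimes\delta_{\gamma(t)},\dot\gamma(t)C_\gamma\,dt\otimes\delta_{\gamma(t)})$, together with $(0,0)$, is extremal in $B_{\alpha,\beta}$. Second, every extremal point has this form. Two preliminary facts will be used throughout.

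\emph{Preliminary facts.} By the positive one-homogeneity of $\mathcal{R}_{\alpha,\beta}$, any nonzero extremal point $(\rho,m)$ must satisfy $\mathcal{R}_{\alpha,\beta}(\rho,m)=1$. Otherwise $(\rho,m)$ is a nontrivial convex combination of $0$ and $(\rho,m)/\mathcal{R}_{\alpha,\beta}(\rho,m)$. The point $(0,0)$ is extremal because $\mathcal{R}_{\alpha,\beta}\geq \alpha\|\rho\|_{TV}$ and $\mathbf{B}\geq 0$. Indeed, if $0=s(\rho_1,m_1)+(1-s)(\rho_2,m_2)$ with $\rho_i\geq 0$, then $\rho_1=\rho_2=0$. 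Since $m_i\ll\rho_i$, this forces $m_i=0$ as well. Next, testing the continuity equation (in the no-flux sense on the closed domain $X$) with functions depending only on $t$ shows that $t\mapsto\rho_t(X)$ is constant. Hence $\rho=dt\otimes\rho_t$ with $\rho_t(X)\equiv\|\rho\|_{TV}$.

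\emph{Extremal points have the stated form.} The key tool is the superposition principle for the continuity equation, as in \cite[Theorem 8.2.1]{AGS05}. Let $(\rho,m)$ have finite energy, write $v:=dm/d\rho\in L^2(\rho)$, and set $\rho_t(X)=M>0$. Then there is a probability measure $\sigma$ on $C([0,1];X)$, concentrated on curves $\gamma\in H^1$ solving $\dot\gamma=v_t(\gamma)$, such that $\rho_t=M(e_t)_\#\sigma$. Because $\dot\gamma=v_t(\gamma)$ on the support of $\sigma$, the energy identity holds with equality:
\[
\mathbf{B}(\rho,m)=M\int\tfrac12\int_0^1|\dot\gamma|^2\,dt\,d\sigma(\gamma).
\]
We also have $(\rho,m)=M\int(dt\otimes\delta_{\gamma(t)},\dot\gamma\,dt\otimes\delta_{\gamma(t)})\,d\sigma$. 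Consequently
\[
1=\mathcal{R}_{\alpha,\beta}(\rho,m)=\int \frac{M}{C_\gamma}\,d\sigma(\gamma),
\]
and this is \emph{linear} in $\sigma$. Now suppose $\sigma$ is not a Dirac mass. Split $\sigma=\lambda\sigma_1+(1-\lambda)\sigma_2$ along a Borel set $E$ with $0<\sigma(E)<1$. Rescale each piece by its energy so that each lies on the unit sphere of $\mathcal{R}_{\alpha,\beta}$; by linearity this gives a nontrivial convex combination of two distinct elements of $B_{\alpha,\beta}$. Distinctness requires choosing $E$ so that the induced $\rho$'s differ; I expect this can be arranged, e.g.\ by separating curves via their values at some rational time. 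Hence $\sigma=\delta_\gamma$, so $(\rho,m)=M(\rho_\gamma,m_\gamma)/C_\gamma$, and $\mathcal{R}_{\alpha,\beta}=1$ gives $M=C_\gamma$.

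\emph{Curve elements are extremal.} Suppose $(\rho_\gamma,m_\gamma)=s(\rho_1,m_1)+(1-s)(\rho_2,m_2)$ with $(\rho_i,m_i)\in B_{\alpha,\beta}$. Since $\rho_i\geq0$, we get $\rho_i\ll\rho_\gamma$, so $\rho_i=a_i(t)\,dt\otimes\delta_{\gamma(t)}$. Mass conservation forces $a_i$ to be constant. Next, $m_i\ll\rho_i$ gives $m_i=w_i(t)\,dt\otimes\delta_{\gamma(t)}$. Plugging this into $\partial_t\rho_i+\operatorname{div}m_i=0$ and testing with $\varphi(t,x)$ yields
\[
\int_0^1 a_i\big(\partial_t\varphi+\nabla\varphi\cdot\dot\gamma\big)(t,\gamma(t))\,dt+\int_0^1\nabla\varphi(t,\gamma(t))\cdot(w_i-a_i\dot\gamma)\,dt=0 .
\]
The first integral vanishes by the chain rule. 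Choosing $\nabla\varphi$ arbitrary along the graph then gives $w_i=a_i\dot\gamma$ a.e. on $\{a_i>0\}$. Therefore $\mathcal{R}_{\alpha,\beta}(\rho_i,m_i)=a_i/C_\gamma\leq1$. Since $sa_1+(1-s)a_2=C_\gamma$, we obtain $a_1=a_2=C_\gamma$, hence both pieces coincide with $(\rho_\gamma,m_\gamma)$.

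The main obstacle is the superposition step: one must apply \cite[Theorem 8.2.1]{AGS05} on the bounded domain $X$ (handling the boundary and the weak constraint) and obtain the \emph{equality} in the energy decomposition, which is what makes $\mathcal{R}_{\alpha,\beta}$ affine along the decomposition. The secondary technical point is ensuring that the splitting of a non-Dirac $\sigma$ produces genuinely distinct pairs.
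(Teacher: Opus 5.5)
The paper does not prove this statement; it quotes Theorem~6 of \cite{bredies2021extremal}. Your proposal is correct and follows essentially the route of that source:
\begin{itemize}
\item the superposition principle;
\item affinity of $\mathcal{R}_{\alpha,\beta}$ along the superposition, since all curves are integral curves of the same field $v=dm/d\rho$;
\item splitting of the path measure $\sigma$;
\item a direct check that curve measures are extremal.
\end{itemize}

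The distinctness step you left open can be closed as follows. Write $\sigma_E:=\sigma|_E/\sigma(E)$, $\rho^E:=M\,dt\otimes(e_t)_\#\sigma_E$ and $r_E:=\int M/C_\gamma\,d\sigma_E$. The normalized piece $\rho^E/r_E$ has time marginal $(M/r_E)\,dt$, and similarly for $E^c$. So the two normalized pieces can coincide only if $r_E=r_{E^c}$ and $\rho^E=\rho^{E^c}$. That means $(e_t)_\#\sigma_E=(e_t)_\#\sigma_{E^c}$ for a.e.\ $t$, hence for every $t$, because $t\mapsto(e_t)_\#\sigma_E$ is narrowly continuous (the curves are continuous).

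Now suppose $\sigma$ is not a Dirac mass. Take $\gamma_1\neq\gamma_2$ in $\mathrm{spt}\,\sigma$ and a time $t_0$ with $\gamma_1(t_0)\neq\gamma_2(t_0)$. Set $r:=|\gamma_1(t_0)-\gamma_2(t_0)|/2$, let $A$ be the open ball of radius $r$ around $\gamma_1(t_0)$, and put $E:=\{\gamma:\gamma(t_0)\in A\}$. Then $E$ is an open neighbourhood of $\gamma_1$. Also $E^c$ contains the open neighbourhood $\{\gamma:|\gamma(t_0)-\gamma_2(t_0)|<r\}$ of $\gamma_2$. Hence $0<\sigma(E)<1$. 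On the other hand $(e_{t_0})_\#\sigma_E(A)=1\neq 0=(e_{t_0})_\#\sigma_{E^c}(A)$, so the normalized pieces are distinct, contradicting extremality.

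One caveat concerns your mass-conservation step, which gives $\rho=dt\otimes\rho_t$ with constant mass. It relies on the convention of the source that the measures live on $(0,1)\times X$. The paper writes $[0,1]\times X$; with test functions compactly supported in $(0,1)$, mass on $\{0,1\}\times X$ then escapes the constraint. In that reading, for example, $(\alpha^{-1}\delta_0\otimes\delta_x,0)$ would be an additional extremal point.
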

\noindent To define AGFs, one can consider the weak*-closed set $\mathcal{B} \subset \tilde{\mathcal{B}}$ defined as
\begin{align}
\mathcal{B} := \{(\rho,m) : \rho= b\, d t \otimes \delta_{\gamma(t)}, \quad m=\dot{\gamma}(t) b\, d t \otimes \delta_{\gamma(t)}, \text{for } b \in [0,C_\gamma]\}\,.
\end{align}
Moreover, the metric $d((\rho_1,m_1), (\rho_2,m_2)) = \|\gamma_1 - \gamma_2\|_{H^1([0,1]; X)}$
metrizes the weak* convergence on $\mathcal{B}$ on curves of bounded speed ($b > 0$).
Therefore since $H^1([0,1]; X)$ is an Hilbert space and any Hilbert space is NPC, uniqueness and contractivity estimates for AGFs again follow from the NPC framework. \\
Here, the functional $J_n$ can be written (on curves of bounded speed) as 
\begin{align*}
J_n\big(((c^j),(\gamma_j))_{j=1}^n \big)= \mathcal{F}\left(\frac{1}{n}\sum_{j=1}^n (c^j)^{2}\,K(C_{\gamma_j} dt \otimes \delta_{\gamma_j(t)}, C_{\gamma_j} \dot \gamma_j(t)dt \otimes \delta_{\gamma_j(t)}) \right)+\frac{1}{n}\sum_{j=1}^n (c^j)^{2}\,, 
\end{align*}
and AGFs correspond to the $([0,L] \times H^1([0,1]; X))^n$ gradient flow of such functional. The lifting can be written similarly as in the previous sections as a gradient flow in $\mathcal{P}_2([0,L] \times H^1([0,1]; X))$. Interestingly such algorithm has been implemented (without mathematical investigation) as an acceleration step in \cite{bredies2023generalized}.

\subsection{Further examples}

In this section we list with less details several further examples that can be handled by AGFs. We refer the reader to the references for more insights about extremality properties of the given problems.

\medskip

\noindent \textbf{Kantorovich-Rubinstein-norms and Wasserstein distances on balanced signed measures.} As shown in the recent papers \cite{carioni2025extremal, bartolucci2024lipschitz, carioni2023general} Kantorovich-Rubinstein norms can be used as regularizers for inverse problems and mean-field neural networks training. The ${\rm KR}$-norm can be defined as follows, cf. \cite{bartolucci2024lipschitz}:
\begin{align}
\|\mu\|_{\rm KR} := |\mu(X)| + \sup\left\{\int_X f(z) \, d\mu(z) : f(e) = 0, L(f) \leq 1\right\}\,,
\end{align}
where $\mu \in M_1(X)$ is a signed measure with finite first moment, $e\in X$ is a base point and $L(f)$ is the Lipschitz constant of $f$.
In case of balanced measures, \textit{i.e.}, when $\mu_+(X) = \mu_-(X)$, the KR norm is simply the $\W_1$-distance between the positive and the negative part of $\mu$.
Alternatively, other possibilities to include unbalanced measures are possible through infimal convolution approaches, cf. \cite{carioni2025extremal, }.
As shown in \cite{bartolucci2024lipschitz} (see also \cite{carioni2023general} and \cite{carioni2025extremal} for corresponding and similar results), the extremal points of the ball of the ${\rm}$ KR-regularizer can be characterized. In case $X$ is a compact set, a simple consequence of the previously mentioned results yields that extremal points of 
\[\{\mu \text{ balanced}: \|\mu\|_{\rm KR} + \|\mu\|_{TV} \leq 1\}\,,\] are dipoles of the form $\mu_{\rm dip} = \frac{\delta_x - \delta_y}{d(x,y) + 1}$. 
Therefore, AGFs applied to optimization problems on balanced signed measures regularized with the ${\rm KR}$-norms are equivalent to Euclidean gradient flows in $([0,L] \times X \times X)^n$. Note that such algorithm have also been implemented (without mathematical investigation) in \cite[Section 8]{bartolucci2024lipschitz}.

\medskip

\noindent \textbf{PDE-regularized optimization problems and splines.}
Optimization problems regularized with the total variation norm of a scalar differential operator $L$ in $\mathbb{R}^d$, \textit{i.e.}, $\mathcal{R}(u) := \|Lu\|_{TV}$ also fit the AGF framework. Indeed, the extremal points of the ball of $\mathcal{R}$ can be characterized (up to elements in the null-space of $L$) as fundamental solutions translated by $x \in \R^{d}$, cf. \cite{bredies2020sparsity, unser2017splines, unser2019native}. Therefore, depending on the regularity of the fundamental solution, AGFs applied to such problems are Euclidean gradient flows in $([0,L] \times \mathbb{R}^d)^n$.

\medskip

\noindent \textbf{BV functions in higher dimensions.} Optimization problems on BV functions in $\mathbb{R}^d$ regularized with the $BV$-seminorm can also be studied through AGFs. The extremal points of the unit ball of the $BV$-seminorm have been shown to be indicator functions of simple sets, cf. \cite{bredies2020sparsity, ambrosio2001connected, fleming1957functions, cristinelli2026conditional}. However, in this case, it is challenging to characterize the weak* distance on $\mathcal{B}$ and the properties of the resulting metric space \cite{de2024exact,de2021towards}.   
\section*{Acknowledgements}
C.A. and M.C.'s research is supported by the NWO-M1 grant \emph{Curve Ensemble Gradient Descents for Sparse Dynamic Problems} (Grant Number OCENW.M.22.302) and NWO-Vidi grant \emph{SPARGO: Exploring and Exploiting the Geometric Landscape of Infinite-Dimensional Sparse Optimization} (Grant Number VI.Vidi.243.200). K.Z. was funded by the Deutsche Forschungsgemeinschaft (DFG, German Research Foundation) – CRC 1720 – 539309657 and also acknowledges previous support by the Hausdorff Center for Mathematics (HCM) under
Germany’s Excellence Strategy -EXC-2047/1-390685813.

\printbibliography

\appendix
\large{\section{Appendix}
\renewcommand{\thesection}{A} 

\normalsize

\subsection{Minimizing movements and curves of maximal slope}\label{sec:met_th}

In order to formulate the concept of gradient flows in our setting, we have chosen (generalized) minimizing movements (see Section \ref{sec:AGFminmov} and Section \ref{sec:3_lifted}). For completeness we recall the classical setup from \cite[Chapter 2]{AGS05}.
\begin{defin}[\textit{Discrete Scheme}]\label{def:discrete_scheme}
Let $(X,d)$ be a complete metric space and $F:X\rightarrow (-\infty,+\infty]$. The discrete scheme is defined as follows.
Given a partition of the time interval $[0,+\infty)$ by a sequence of time steps ${\boldsymbol\tau}:=(\tau_n)_{n\in \N}\subset (0,+\infty)$ with $|{\boldsymbol\tau}|:= \sup_{n\in \N} \tau_n <+\infty$, let us set
\begin{align}\label{eq:time_partition}
\begin{split}
\mathcal{P}_{\boldsymbol\tau}&:=\{0=:t^0_{\boldsymbol\tau}<t^1_{\boldsymbol\tau}<\dots <t^n_{\boldsymbol\tau}<\dots\}\,,\quad I_{\boldsymbol\tau}^n:=(t_{\boldsymbol\tau}^{n-1},t_{\boldsymbol\tau}^{n}]\,,\\
\tau_n&=:t_{\boldsymbol\tau}^{n}-t_{\boldsymbol\tau}^{n-1}\,,\quad \lim_{n\to \infty}t_{{\boldsymbol\tau}}^n=\sum_{k=1}^\infty \tau_k=+\infty\,, 
\end{split}
\end{align}
and consider the functionals
\begin{align}
\label{eq:discrete_scheme}
G(\tau_n,u_{\boldsymbol\tau}^{n-1};v) := F(v)+\frac{1}{2\tau_n}d^2(v,u_{\boldsymbol\tau}^{n-1})\,.
\end{align}
Given an admissible initialization $u_{\boldsymbol\tau}^0 \in \mathrm{Dom}(F)$, we define successively $(u^n_{\boldsymbol\tau})_{n\in \N}\subset X$, with the property that
\begin{align}
\label{eq:discrete_functional}
u^n_{\boldsymbol\tau} \in \argmin_{v\in X}G(\tau_n,u_{\boldsymbol\tau}^{n-1};v)\, \ \ \forall n\geq 1\,.
\end{align}
One then defines accordingly the piecewise constant interpolation 
\begin{equation}\label{eq:piecewise_constant_int}
\overline{u}_{\boldsymbol\tau}(t) := \left\{\begin{IEEEeqnarraybox}[
\IEEEeqnarraystrutmode
\IEEEeqnarraystrutsizeadd{2pt}
{2pt}
][c]{rCl}
& u_{\boldsymbol\tau}^0\,, & \quad \text{ if } t=0\,, \\
& u_{\boldsymbol\tau}^n\,, & \quad \text{ if } t\in (t^{n-1}_{\boldsymbol\tau},t^{n}_{\boldsymbol\tau}]\, \ \ \forall n\geq 1\,.
\end{IEEEeqnarraybox}
\right.
\end{equation}
We call $\overline{u}_{\boldsymbol\tau}$ of \eqref{eq:piecewise_constant_int} a \textit{discrete solution} corresponding to the partition $\mathcal{P}_{\boldsymbol\tau}$ of \eqref{eq:time_partition}.
\end{defin}

\begin{rem}
The operator which provides all the solutions to the minimization problem \eqref{eq:discrete_functional} (given $u_{\boldsymbol\tau}^{n-1}$) is generically multi-valued, and is often called the \textit{resolvent operator}. For a general $\tau>0$ and $u\in X$ it is defined via
\begin{equation*}
J_{\tau}(u):=\argmin_{v\in X}G(\tau,u;v)\,,\ \textit{i.e.}, \quad u_{\tau}\in J_{\tau}(u)\iff G(\tau, u;{u_\tau})\leq G(\tau,u;v) \ \ \forall v\in X\,.
\end{equation*}
\noindent
Thus, Definition \ref{def:discrete_scheme} can equivalently be phrased as follows: $\bar u_{\boldsymbol\tau}$ is a discrete solution iff $u_{\boldsymbol\tau}^n\in J_{\tau_n}(u_{\boldsymbol\tau}^{n-1})$ for every $n\geq 1$.
\end{rem}

\begin{defin}[\textit{Minimizing Movements}]
\label{def:minimizing_movements}
Let $(X,d)$ be a complete metric space, $F:X\rightarrow (-\infty,+\infty]$ and $G$ be defined as in \eqref{eq:discrete_scheme}. We call a curve $x_t:[0,+\infty)\rightarrow X$ a \textit{minimizing movement} for $G$ starting from $x_0 \in \mathrm{Dom}(F)$ iff there exists a sequence of partitions $({\boldsymbol\tau}_k)_{k\in \N}$  with $|{\boldsymbol\tau}_k|\rightarrow 0$ as $k\to \infty$, and a sequence of discrete solutions $(\overline{x}_{{\boldsymbol\tau}_k})_{k\in \N}$ as in \eqref{eq:piecewise_constant_int} such that
\begin{align}\label{def:gener_minimizing_movement}
\begin{split}
\mathrm{(i)}&\quad \lim_{k\rightarrow \infty} F(x^0_{{\boldsymbol\tau}_k}) = F(x_0)\,,\quad \limsup_{k\to \infty}d(x^0_{{\boldsymbol\tau}_k},x_0) < +\infty\,, \\
\mathrm{(ii)}&\quad \overline{x}_{{\boldsymbol\tau}_k,t} \rightarrow x_t \text{ for all } t\geqslant 0\,, \ \text{as }k\to +\infty\,.
\end{split}
\end{align}
More precisely, in \cite[Definition 2.0.6]{AGS05}, such $x_t$ are called \emph{generalized minimizing movements}, and denoted by $x_t\in \mathrm{GMM}(G,x_0)$. A related concept is the one of  \emph{minimizing movements} that can be defined as follows.
A curve $x_t:[0,+\infty)\rightarrow X$ is a \textit{minimizing movement} for $G$ starting from $x_0 \in \mathrm{Dom}(F)$, and one writes $x_t\in {\rm MM}(G,x_0)$, iff for every partition ${\boldsymbol\tau}:=(\tau_n)_{n\in \N}$, there exists a discrete solution $\overline{x}_{\boldsymbol\tau}$ as in \eqref{eq:discrete_functional}-\eqref{eq:piecewise_constant_int}, such that
\begin{align}\label{def:minimizing_movement}
\begin{split}
\mathrm{(i)}&\quad\lim_{|{\boldsymbol\tau}|\downarrow 0} F(x^0_{\boldsymbol\tau}) = F(x_0)\,,\quad \limsup_{|{\boldsymbol\tau}|\downarrow 0}d(x^0_{\boldsymbol\tau},x_0) < +\infty\,,\\
\mathrm{(ii)}&\quad\overline{x}_{\boldsymbol\tau,t} \rightarrow x_t\text{ for all } t\geqslant 0\,, \ \text{as }|{\boldsymbol\tau}|\rightarrow 0\,.
\end{split}
\end{align}
\end{defin}

We also recall and combine some standard results on uniqueness of gradient flows on metric spaces of non-positive curvature in the following, using as a reference \cite[Chapter 4]{AGS05}.
\begin{thm}
\label{thm:uniqueness+regularity_mm_in_npc_case}
Let $(X,d)$ be a complete metric space of non-positive curvature cf. \Cref{def:NPC}, and $F:X\rightarrow (-\infty,+\infty]$ be $\lambda$-convex for some $\lambda\in \R$.
Then for every initial point $x_0\in X$, there exists a unique $x_t\in\mathrm{GMM}(G,x_0)$ that is locally Lipschitz and fulfills the following contractivity property: for every $x_0,\tilde{x}_0\in X$,
\begin{equation}\label{eq:contract_prop}
d\big(x_t,\tilde{x}_t\big) \leqslant  e^{-\lambda t}d\big(x_0,\tilde{x}_0\big) \quad \text{for } \mathcal{L}^1\text{-a.e. }t>0\,.
\end{equation}
 Given $x_0\in X$, the unique corresponding $ x_t\in\mathrm{GMM}(G,x_0)$ is also the unique solution to the following evolution variational inequality $(\mathrm{EVI})_\lambda$:
\begin{equation}\label{eq:EVI}
\frac{1}{2}\frac{d}{dt}d^2(x_t,y)+\frac{\lambda}{2}d^2(x_t,y) + F(x_t) \leq F(y)\, \quad \forall y\in \mathrm{Dom}(F)\,,
\end{equation}
and is a curve of maximal slope from $x_0$ for $F$, hence fulfills the inequality:
\begin{equation}\label{eq:nonlifted_c_o_m_s}
2\frac{d}{dt}F(x_t) \leq - |x_t'|^2 - |\partial F|^2(x_t)\,.
\end{equation}
In addition, for all absolutely continuous curves $\tilde{x}_t$ starting from $x_0$, $x_t$ is the unique one that fulfills \eqref{eq:nonlifted_c_o_m_s}, and therefore it is also the unique one with this property among all curves of maximal slope from $x_0$ with respect to $F$.
\begin{proof}
Since $F$ is $\lambda$-convex and $(X,d)$ is NPC, by \cite[Remark 4.0.2]{AGS05} it follows that  $v\mapsto G(\tau,w;v)$ is $(\tau^{-1}+\lambda)$-convex (see \cite[Assumption 4.0.1]{AGS05}. Therefore, existence, uniqueness and regularity of $u\in {\rm MM}(G,x_0)$ as well as \eqref{eq:contract_prop} follow from \cite[Theorem 4.0.4]{AGS05}.\\
To prove that ${\rm MM}(G,x_0)={\rm GMM}(G,x_0)$, note that the solutions to the discrete scheme \eqref{eq:discrete_scheme} are unique for $|\boldsymbol{\tau}|$ small enough, as then \cite[Theorem 4.1.2]{AGS05} is applicable because then $\lambda |\boldsymbol{\tau}| >-1$. By Definition \ref{def:minimizing_movements}, one has that for all partitions $\boldsymbol{\tau}$ the unique discrete solution $\overline{x}_{\boldsymbol{\tau}}$ of \eqref{eq:piecewise_constant_int} satisfies 
$$\overline{x}_{\boldsymbol{\tau},t}\rightarrow x_t \ \text{ for } \ |{\boldsymbol{\tau}}|\rightarrow 0\,.$$ 
Now, if $\tilde{x}_t \in {\rm GMM}(G,x_0)$, by the definition of generalized minimizing movements, $\tilde{x}_t$ is obtained by a specific choice of partitions $({\boldsymbol{\tau}}_k)_{k\in \N}$ and discrete solutions $(\overline{x}_{{\boldsymbol{\tau}}_k})_{k\in \N}\subset X$, for which
$\overline{x}_{{\boldsymbol{\tau}}_k,t} \rightarrow\tilde{x}_t$ as $k\rightarrow \infty$. 
But then it must also hold that $\overline{x}_{{\boldsymbol{\tau}}_k,t} \rightarrow x_t$ as $k\rightarrow \infty$, hence $x_t = \tilde{x}_t$, showing the uniqueness. The uniqueness among all curves of maximal slope follows from the existence of  $(\mathrm{EVI})_\lambda$ solutions, cf. \cite[Theorem 4.2]{Muratori2020}.
\end{proof}
\end{thm}

\subsection{Proof of Lemma \ref{lem:fixing_of_signs}}
\label{app:A_2_proof_5_3}

\label{lem:fixing_of_signs_appendix}
Note that thanks to \ref{assmpt:a8} and recalling Remark \ref{rem:assumption8} there exists $C>0$ such that 
for every $(c',u'),  (\tilde c,\tilde u)  \in \Omega_L$, it holds that
\begin{equation}\label{eq:lipschitz_K_n}
\|(c')^2Ku' - \tilde{c}^2K\tilde{u}\|_Y \leq C d_\Omega((c',u'),(\tilde{c},\tilde{u}))\,.
\end{equation}

Then, since $ \mathcal{F}$ is Frech{\'e}t-differentiable  and $\nu\in\mathcal{P}_2(\Omega_L)$, one has that 
\begin{align}\label{eq:F_minus_average_1}
\int_{\Omega_L}  \F(c^2 Ku)\, d\nu-\F\left(\int_{\Omega_L}  c^2 K u\,d\nu\right) =&\ \int_{\Omega_L}  \Big[  \F(c^2 Ku) - \F(c_t^2Ku_t) + \F(c_t^2Ku_t) -\F\Big(\int_{\Omega_L}  c^2 K u\, d\nu\,\Big              )\Big]\,d\nu \notag \\
=&\ \int_{\Omega_L} \big((\nabla \F(c_t^2Ku_t), c^2 Ku - c^2_tKu_t)_Y +  g_1(c^2Ku-c_t^2Ku_t)\big)\,d\nu \notag \\
&+   \big(\nabla \F\Big(\int_{\Omega_L}  c^2 K u\,d\nu\Big),c^2_tKu_t-\int_{\Omega_L}  c^2K u\,d\nu\big)_Y\, \notag \\
&+  g_2\Big(c_t^2Ku_t-\int_{\Omega_L} c^2K u\,d\nu\Big)\,,
\end{align}
 for functions $g_i\colon Y\to \R$, $i=1,2$, with 
\begin{equation}\label{eq:g_1_2_o_t}
\lim_{y\to 0}\frac{|g_i(y)|}{\|y\|_Y}=0\,\ \text{for }i=1,2\,.
\end{equation}
 In view of \eqref{eq:F_minus_average_1} and using that $\F\in C^2$ (cf. \ref{assmpt:a2}), the Cauchy-Schwartz inequality, \eqref{eq:lipschitz_K_n}, Jensen's inequality, and \eqref{eq:Wasserstein_2}--\eqref{eq:d_Omega},   we first estimate 
\begin{align*}
&\int_{\Omega_L} \Big(\big(\nabla  \F(c_t^2 K u_t), c^2 Ku - c^2_tKu_t \big)_Y \,d\nu +  \big(\nabla \F\Big(\int_{\Omega_L}  c^2 K u\,d\nu\Big),c^2_tKu_t-\int_{\Omega_L}  c^2K u\,d\nu\big)_Y\, \\
& =\int_{\Omega_L} \big(\nabla \F(c_t^2 K u_t)-\nabla \F\Big(\int_{\Omega_L} c^2 K u\,d\nu\Big), c^2 Ku - c^2_tKu_t \big)_Y \,d\nu\, \\
& \leq \|\F\|_{C^{2}} \left\|\int_{\Omega_L}( c^2 Ku - c^2_tKu_t )\,d\nu\right\|_Y^2\leq  \|\F\|_{C^{2}}\left(\int_{\Omega_L} d_\Omega((c,u),(c_t,u_t))\, d\nu\right)^2\\
&\leq \|\F\|_{C^{2}} \int_{\Omega_L} d^2_\Omega((c,u),(c_t,u_t))\, d\nu\leq  \|\F\|_{C^{2}} \W_2^2(\nu,\mu_t)\,.
\end{align*}
Therefore, in order to prove \eqref{eq:Wasserstein_limit_1}, it suffices to deal with the $g_1,g_2$-terms in \eqref{eq:F_minus_average_1}, \textit{i.e.}, to prove that 
\begin{equation}
\label{eq:int_little_o_is_wasserstein_little_o}
\lim_{\nu\stackrel{*}{\rightharpoonup}\mu_t} \frac{\left|\int_{\Omega_L} g_1(c^2Ku-c_t^2Ku_t)\, d\nu\right|}{\W_2(\nu,\mu_t)} =\lim_{\nu\stackrel{*}{\rightharpoonup}\mu_t} \frac{\left| g_2(c_t^2Ku_t-\int_{\Omega_L}c^2Ku\,d\nu)\,  \right|}{\W_2(\nu,\mu_t)}=0\,.
\end{equation}
Indeed, for the first limit, by \eqref{eq:lipschitz_K_n} and \eqref{eq:g_1_2_o_t}, and recalling the notation in \eqref{eq:delta_t_ball}, for every $\varepsilon>0$ there exists $\delta>0$, such that if $(c,u)\in B_{\delta,t}$, then 
\[|g_1(c^2Ku-c_t^2Ku_t) \leq \varepsilon \|c^2Ku-c_t^2Ku_t\|_Y\,,\] 
and once again by \eqref{eq:lipschitz_K_n}, 
\begin{equation}\label{eq:Wasserstein_simple_est}
\int_{\Omega_L}\|c^2Ku-c_t^2Ku_t\|_Y\,d\nu\leq C\int_{\Omega_L}d_\Omega((c,u),(c_t,u_t))\,d\nu\leq C\W_2(\nu,\mu_t)\,.
\end{equation}
Since by \eqref{eq:Wasserstein_2}--\eqref{eq:d_Omega} we have $\W_2^2(\nu,\mu_t)\geq \delta^2\nu(\Omega_L\setminus B_{\delta,t})$ (cf. also \eqref{eq:quantitative_estimate_1n}), we estimate 
\begin{align*}
\left|\int_{\Omega_L}  g_1(c^2Ku-c_t^2Ku_t)\,d\nu \right| &\leq   \int_{ B_{\delta, t}} \Big|g_1(c^2Ku-c_t^2Ku_t)\Big|\,d\nu + \int_{ \Omega_L\setminus B_{\delta, t}} \Big| g_1(c^2Ku-c_t^2Ku_t)\Big|\,d\nu \\
&\leq  \varepsilon \int_{ B_{\delta,t}} \|c^2Ku-c_t^2Ku_t\|_Y\,d\nu + C \nu( \Omega_L\setminus B_{\delta,t}) \\
&\leq  \varepsilon  \W_2(\nu,\mu_t) + \frac{C}{\delta^2} \W_2^2(\nu,\mu_t).
\end{align*}
Sending $\nu \stackrel{*}{\rightharpoonup} \mu_t$ here, gives 
\begin{equation*}
 \lim_{\nu\stackrel{*}{\rightharpoonup}  \mu_t}\frac{\big|\int_{\Omega_L} g_1(c^2Ku-c_t^2Ku_t)\,d\nu\big|}{\W_2(\nu,\mu_t)} \leq \varepsilon\,,
\end{equation*}
 and since $\varepsilon>0$ was arbitrary, the first part of \eqref{eq:int_little_o_is_wasserstein_little_o} follows. For the second one,
 by \eqref{eq:Wasserstein_simple_est} it holds
\begin{equation}\label{eq:Wass_distance_easy}
\left\|c_t^2Ku_t-\int_{\Omega_L} c^2K u\,d\nu\right\|_Y \leq C \W_2(\nu,\mu_t)\to 0 \ \text{ as }\nu\stackrel{*}{\rightharpoonup}  \mu_t\,,
\end{equation}
and therefore, using \eqref{eq:Wass_distance_easy} and also \eqref{eq:g_1_2_o_t}, we deduce that 
\begin{align*}
\lim_{\nu\stackrel{*}{\rightharpoonup}\mu_t} \frac{| g_2(c_t^2Ku_t-\int_{\Omega_L}c^2Ku\,d\nu)\, |}{\W_2(\nu,\mu_t)} \leq C \frac{\big|g_2(c_t^2Ku_t-\int_{\Omega_L}c^2Ku\,d\nu)\big|}{\big\|c_t^2Ku_t-\int_{\Omega_L}c^2Ku\,d\nu\big\|_Y}\,=0\,,
\end{align*}
which concludes the proof. 
\end{document}